\newcommand{\Acal}[0]{\ensuremath{{\mathcal A}}}
\newcommand{\Hcal}[0]{\ensuremath{{\mathcal H}}}
\newcommand{\Ical}[0]{\ensuremath{{\mathcal I}}}
\newcommand{\Jcal}[0]{\ensuremath{{\mathcal J}}}
\newcommand{\Mcal}[0]{\ensuremath{{\mathcal M}}}
\newcommand{\Scal}[0]{\ensuremath{{\mathcal S}}}
\newcommand{\Xcal}[0]{\ensuremath{{\mathcal X}}}
\newcommand{\eN}[0]{\ensuremath{ \mathbb N}}
\newcommand{\Zed}[0]{\ensuremath{ \mathbb Z}}
\newcommand{\Pee}[0]{\ensuremath{{\mathbb P}}}
\newcommand{\Ee}[0]{\ensuremath{{\mathbb E}}}
\newcommand{\isd}[0]{\hspace{.2ex} \raisebox{-.1ex}{$=$} \hspace{-1.5ex} 
\raisebox{1ex}{{$\scriptstyle d$}} \hspace{.8ex} }
 \newcommand{\eps}{\varepsilon}
\newcommand{\dtv}{d_{\on{TV}}}
\DeclareMathOperator{\Bi}{Bi}
\DeclareMathOperator{\TOTO}{\an{TOTO}}
\DeclareMathOperator{\TOOB}{\an{TOOB}}
\definecolor{orange}{RGB}{255,127,0}
\definecolor{pink}{RGB}{255,150,150}
\newcommand{\FS}[1]{{{\BR{{}#1}}}}
\newcommand{\TV}[1]{{{\BL{#1}}}}
\DeclareMathOperator{\inv}{inv}
\DeclareMathOperator{\Mallows}{Mallows}
\DeclareMathOperator{\rk}{rk}
\DeclareMathOperator{\Geo}{Geo}
\DeclareMathOperator{\TGeo}{TruncGeo}
\DeclareMathOperator{\Unif}{Unif}
\newcommand{\piempty}[0]{\ensuremath{\pi_{\text{empty}}}}
\newcommand{\sempty}[0]{\ensuremath{s_{\text{empty}}}}
\renewcommand{\phi}{\varphi}
\renewcommand{\succ}{\an{succ}}
\newtheorem{thm}{Theorem}
\numberwithin{thm}{section} 
\numberwithin{figure}{section}
\newtheorem{lem}[thm]{Lemma}
\newtheorem{cor}[thm]{Corollary}
\newtheorem{prop}[thm]{Proposition}
\newtheorem*{claim*}{Claim}
\newtheorem*{con*}{Conjecture}
\theoremstyle{definition}
\newtheorem{rem}[thm]{Remark}
\newtheorem*{exa*}{Example}
\newcommand{\floor}[1]{\left\lfloor #1 \right\rfloor}
\newcommand{\ceil}[1]{\left\lceil #1 \right\rceil}
\newcommand{\on}[1]{\operatorname{#1}}
\newcommand{\an}[1]{\mathsf{#1}}
\renewcommand{\mid}{:}
\newcommandx{\bm}[1]{ \begin{bmatrix} #1 \end{bmatrix}  }
\newcommand{\be}{\coloneqq}
\newcommand{\ind}[1]{\mathbf{1}_{#1}}
\renewcommand{\vec}[1]{\underline{#1}}
\renewcommand{\epsilon}{\varepsilon} 
\let\emptyset\varnothing
   \def\bE{{\mathbb E}}
   \def\bN{{\mathbb N}}   
\def\bP{{\mathbb P}}      \def\bR{{\mathbb R}}
   \def\bZ{{\mathbb Z}}
   \def\cE{{\mathcal E}}   
      \def\cI{{\mathcal I}}
      \def\cL{{\mathcal L}}
\def\cM{{\mathcal M}}   \def\cN{{\mathcal N}}   
\def\cS{{\mathcal S}}
\def\fA{{\mathfrak A}}   \def\fB{{\mathfrak B}}
\newenvironment{proofof}[1]{\vspace{1ex}\noindent{\bf Proof of #1:}}{\hspace*{\fill}$\blacksquare$\vspace{1ex}}
\title{Logical limit laws for Mallows random permutations}
\author{Tobias M\"uller\thanks{Bernoulli Institute for Mathematics, CS and AI, Groningen University, {\tt tobias.muller@rug.nl}. 
Supported in part by the Netherlands Organisation for Scientific Research (NWO) under project nos 612.001.409 and 639.032.529.} 
\and Fiona Skerman\thanks{Department of Mathematics, Uppsala University, {\tt fiona.skerman@math.uu.se}.
Supported by AI4Research at Uppsala University and by the Wallenberg AI, Autonomous Systems and Software Program (WASP).} \and Teun W.~Verstraaten\thanks{Bernoulli Institute for Mathematics, CS and AI, Groningen University, 
{\tt t.w.verstraaten@rug.nl}.}}
\date{\today}
\renewcommand{\FS}[1]{#1}
\renewcommand{\TV}[1]{#1}
\begin{document}
\maketitle

\begin{abstract}
    A random permutation $\Pi_n$ of $\{1,\dots,n\}$ follows the $\Mallows(n,q)$ distribution with 
    parameter $q>0$ if $\bP \left( \Pi_n = \pi \right)$ is proportional to $q^{\inv(\pi)}$ for all $\pi$. 
    Here $\inv(\pi) \be |\{ i<j : \pi(i)> \pi(j) \}|$ denotes the number of inversions of $\pi$. 
    We consider properties of permutations that can be expressed by the sentences of two different logical languages.
    Namely, \emph{the theory of one bijection} ($\TOOB$), which describes 
    permutations via a single binary relation, and \emph{the theory of two orders} ($\TOTO$), 
    where we describe permutations by two total orders. 
    We say that the {\em convergence law} holds with respect to 
    one of these languages if, for every sentence $\phi$ in the language, the probability $\bP (\Pi_n\text{ satisfies } \phi)$
    converges to a limit as $n\to\infty$. If moreover that limit is $\in\{0,1\}$ for all sentences, then the {\em zero--one law} holds.
    
    We will show that with respect to $\TOOB$ the $\Mallows(n,q)$ distribution satisfies the zero--one law when $0<q<1$ is 
    fixed, and for fixed $q>1$ the convergence law fails.  (In the case when $q=1$ Compton~\cite{Compton89II} 
    has shown the convergence law holds but not the 
    zero--one law.)
    
    We will prove that with respect to $\TOTO$ the $\Mallows(n,q)$ distribution satisfies the convergence law 
    but not the zero--one law for any fixed $q\neq 1$, and that if $q=q(n)$ satisfies 
    $1- 1/\log^*n < q < 1 + 1/\log^*n$ then $\Mallows(n,q)$ fails the convergence law. 
    Here $\log^*$ denotes the discrete inverse of the tower function.
\end{abstract}

\section{Introduction}

Throughout the paper, we denote by $[n] := \{1,\dots,n\}$ the first $n$ positive integers and 
by  $S_n$ the set of permutations on $[n]$. 
A pair $i,j\in [n]$ is an inversion 
of the permutation $\pi \in S_n$ if $i<j$ and $\pi(i) > \pi(j)$. 
We denote by $\inv (\pi)$ the number of inversions of a permutation $\pi$. 

For $n \in \eN$ and $q>0$, the Mallows distribution $\Mallows(n,q)$ 
samples a random element $\Pi_n$ of $S_n$ such that for all $\pi\in S_n$ we have

\begin{equation}\label{eq:Mallowsdef} 
    \Pee( \Pi_n = \pi ) = \frac{q^{\inv(\pi)}}{\sum_{\sigma\in S_n} q^{\inv(\sigma)}}. 
\end{equation}

\noindent
In particular, if $q=1$ then the Mallows distribution is simply the uniform distribution on $S_n$.

The Mallows distribution was first introduced by C.L. Mallows~\cite{Mallows} in 1957 
in the context of statistical ranking theory. 
It has since been studied in connection with a diverse range of topics, including 
mixing times of Markov chains~\cite{Benjamini2005, Diaconis2000},
finitely dependent colorings of the integers~\cite{HolroydHutchcroftLevy2020}, 
stable matchings~\cite{AngelEtAl}, random binary search trees~\cite{Louigi}, learning theory~\cite{BravermanMossel,Tang19}, 
 $q$-analogs of exchangeability~\cite{GnedinOlshanski2010,Gnedin}, determinantal point processes~\cite{BorodinDiaconisFulman2010},
statistical physics~\cite{Starr2009,StarrWalters2018}, genomics~\cite{FangEtAl2021} and
random graphs with tunable properties~\cite{enright2021tangled}.

Properties of the Mallows distribution that have been investigated include pattern 
avoidance~\cite{CraneDesalvo2017, CraneDesalvoElizalde2018, Pinsky2021}, the number of descents~\cite{HeDescents}, 
the longest monotone subsequence~\cite{basu2016limit,BhatnagarPeled2015, MuellerStarr2013}, the longest common 
subsequence of two Mallows permutations~\cite{Jin2019} and the cycle structure~\cite{Peled,MullerVerstraaten}.

In the present paper we will study the Mallows distribution from the perspective of first order logic.
Given a sequence of random permutations $(\Pi_n)_{n\geq 1}$, we say that the \emph{convergence law} holds
with respect to some fixed logical language describing permutations
if the limit $\lim_{n\to\infty} \bP \left( \Pi_n\models \phi \right) $ exists, for every sentence $\phi$ in the language. 
Here and in the rest of the paper the notation $\pi \models \phi$ denotes that the sentence $\phi$ holds for the permutation 
$\pi$.
If this limit is either $0$ or $1$ for all such $\phi$ then we say that the \emph{zero--one law} holds. 
%
Following \cite{Albert}, we will consider two different logical languages for permutations: 
the Theory of One Bijection ($\TOOB$) and the Theory of Two Orderings ($\TOTO$). 
Here we give an informal overview of them. More precise definitions follow in Section \ref{sec:logic}. 

In $\TOOB$, we can express a property of a permutation 
using variables representing the elements of the domain of the permutation, the usual quantifiers $\exists, \forall$, 
the usual logical connectives $\vee,\wedge,\neg,\dots$, parantheses and two binary relations $=,R$. 
Here $=$ has the usual meaning ($x=y$ denotes that the variables $x,y$ represent the same 
element of the domain of the permutation) and $R(x,y)$ holds if and only if $\pi(x)=y$.
We are for instance able to query in $\TOOB$ if a permutation has a fixed point by

$$ \exists x : R(x,x). $$

As is shown \cite{Albert} (Proposition 3 and the comment following it), we cannot express by a $\TOOB$
sentence whether or not a permutation contains the pattern $231$. 
(A permutation $\pi$ contains the pattern $231$ if there exist $i<j<k$ such that $\pi(k)<\pi(i)<\pi(j)$.)

The logical language $\TOTO$ is constructed 
similarly to $\TOOB$. 
Instead of the relation $R$ there now are two relations $<_1, <_2$.
The relation $<_1$ represents the usual linear order on the domain $[n]$ of $\pi \in S_n$ and
the relation $<_2$ represents the usual linear order of the images $\pi(1),\dots,\pi(n)$. 
That is, $x<_1y$ if and only if $x<y$, while $x <_2 y$ if and only if $\pi(x) < \pi(y)$. 
In $\TOTO$ we can for instance express that a permutation contains the pattern $231$, via

$$ \exists x, y, z : (x <_1 y) \wedge (y <_1 z) \wedge (z <_2 x) \wedge (x <_2 y).
$$ 

\noindent
See Figure~\ref{fig:231} for an illustration.  
We can also express that $\pi(1)<\pi(2)$ by: 

     $$ \begin{array}{l} 
     \exists x, y : (x <_1 y) \wedge  (x <_2 y) \wedge \left( \forall z : (z<_1y) \rightarrow (z=x)\right). 
     \end{array} $$

\noindent
See Section 3 of \cite{Albert} for generalizations of pattern containment that can be expressed in $\TOTO$. 
On the other hand, in $\TOTO$ we cannot express whether or not a permutation has a fixed point, as shown 
in Corollary~27 of \cite{Albert}.

\begin{figure}
    \begin{subfigure}[t]{0.32\textwidth}
        \captionsetup{width=.9\linewidth}
        \includegraphics[scale=0.5]{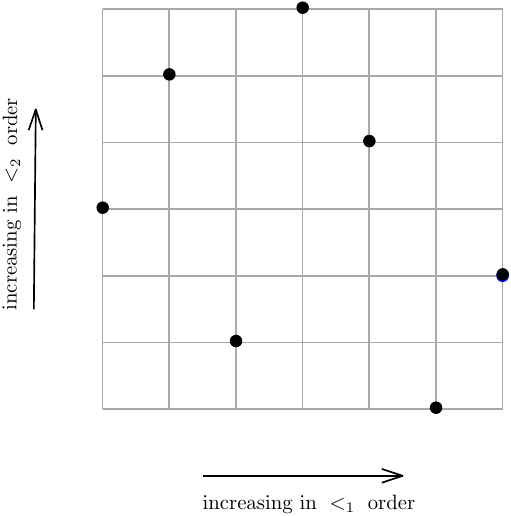}
        \caption{the initial three elements induce the pattern 231. } 
    \end{subfigure}
    \begin{subfigure}[t]{0.32\textwidth}
        \captionsetup{width=.9\linewidth}
        \includegraphics[scale=0.5]{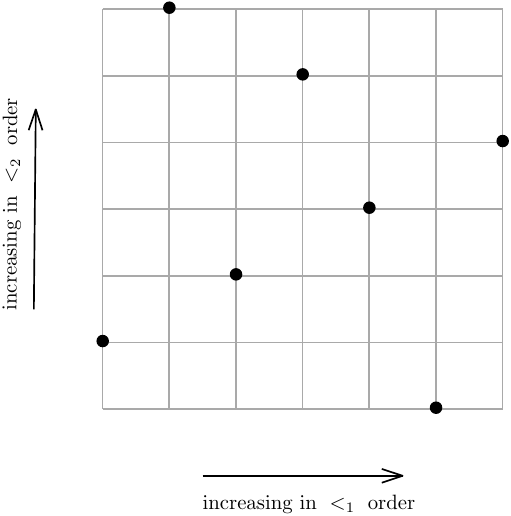}
        \caption{contains 231 but not on the initial three elements.} 
    \end{subfigure}
    \begin{subfigure}[t]{0.32\textwidth}
        \captionsetup{width=.9\linewidth}
        \includegraphics[scale=0.5]{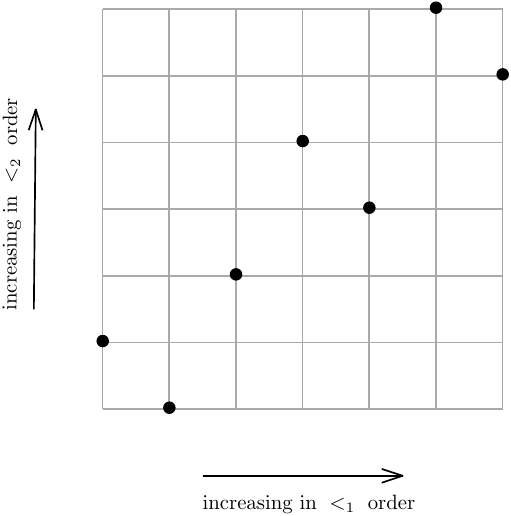}
        \caption{does not contain 231.} 
    \end{subfigure}
    \caption{   \label{fig:231}}
\end{figure}

 It is not hard to see that for a uniform random permutation the probability 
 that $\pi(1)<\pi(2)$ equals $1/2$. 
 So in particular, for uniform permutations $\TOTO$  does not satisfy a zero-one law. See also 
 the remark following Question 1 in~\cite{Albert}.
 What is more, in $\TOTO$ uniform permutations do not even satisfy the convergence law as was 
 first shown by Foy and Woods~\cite{Foy}. 
Let us also mention two very recent results on logical limit laws for random permutations. 
In \cite{albert2022convergence} it is shown the uniform distribution on the set of permutations in $S_n$ that avoid 231 
admits a convergence law in $\TOTO$, and in~\cite{braunfeld2021logical} it is shown the same result holds 
for the uniform distribution on the class of layered permutations.

Both $\TOOB$ and $\TOTO$ fall under the umbrella of {\em first order} logical languages, as
they only allow quantification over the elements of the domain. In contrast, second order logic
also allows us to quantify over relations. 
Second order logic is much more powerful than first order logic.
It is for instance possible to express in second order logic the property that the 
domain has an even number of elements. In particular, the convergence law will fail for 
second order logic, for trivial reasons.


The study of first order properties of random permutations is part of a larger theory of 
first order properties of random discrete 
structures. See for instance the monograph~\cite{Spencerstrange}. 
Examples of structures for which the first order properties have been studied include the Erd\H{o}s-R\'enyi random 
graph (see e.g.~\cite{PurpleBook},\cite{Spencerstrange}), Galton-Watson 
trees~\cite{podder2017first}, bounded degree graphs~\cite{koponen2012random}, random graphs 
from minor-closed classes~\cite{heinig2018logical}, random perfect graphs~\cite{muller2018first}, random 
regular graphs~\cite{haber2010logic} and the very recent result on bounded degree uniform 
attachment graphs \cite{malyshkin2022logical}. 

Let us also mention some work on {\em random orders} that is closely related to the topic of the present paper. 
The logic of random orders was introduced in \cite{winkler1985random} and has a single binary 
relation $<$. 
To sample a ``$k$-dimensional'' random order on $[n]$ pick $k$ random 
orderings $<_1, \ldots, <_k$ on $[n]$ and  set $x<y$ if $x <_i y$ in each of the orders $i=1, \ldots, k$. 

Non-convergence was proven for two-dimensional random orders in~\cite{spencer1990nonconvergence} 
by constructing a first--order sentence $\phi$ for which no limit probability exists.
Replacing every occurrence of $<$ in this sentence by $(x <_1 y) \land (x <_2 y)$
yields an alternative proof of non-convergence also in $\TOTO$ for uniformly random permutations. 
Many properties of random orders are known \cite{albert1989random, bollobas1988longest}, see \cite{brightwell1993models} 
and \cite{brightwell2016mathematics} for surveys of different models of random partial 
orders and also their relation to theories of spacetime~\cite{brightwell2016mathematics}. 

%
An appeal of $\TOTO$ is that it allows one to express pattern containment. 
Permutation patterns arise naturally in statistics. Let $(X,Y)$ be drawn from a continuous distribution 
on $\mathbb{R}^2$ and suppose we have $n$ random samples $(X_i, Y_i)$. Many 
statistical tests rely only on the relative ordering in the two dimensions, i.e.\ on the permutation induced by 
the $n$ points. 
For example the Kendall rank correlation coefficient is 
$\tau = 1-2\inv(\pi)/\binom{n}{2}$ and indeed there is a test for independence of $X$ and $Y$ using only counts of 
permutations of length 4~\cite{kral2013quasirandom, yanagimoto1970measures}. See \cite{even2021counting} for a combinatorial 
account of the use of permutation patterns in statistics. 

\subsection{Main results}

The main results of this paper are collected in the following two theorems:

\begin{thm}\label{thm:logical_limits}
    In $\TOOB$ the following holds:
    \begin{enumerate}
        \item\label{item:logi_convergence}{\bf[Compton,\cite{Compton89II}]} For $q=1$ the $\Mallows(n,q)$ distribution satisfies the convergence law but not the zero--one law;
        \item  \label{item:logi_zero_one} If $q<1$ is fixed then $\Mallows(n,q)$ satisfies the zero--one law;
        \item   \label{item:logi_non_convergence}  If $q>1$ is fixed then $\Mallows(n,q)$ does not satisfy the convergence law.
    \end{enumerate}
\end{thm}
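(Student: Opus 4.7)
The plan is to analyze $\TOOB$-sentences via Ehrenfeucht-Fra\"iss\'e games. For a structure consisting of a single bijection on a finite set, it is classical that for each quantifier depth $r$ there is a threshold $T = T(r)$ such that two permutations are $r$-equivalent in $\TOOB$ if and only if their truncated cycle profiles $\big( \min(c_k(\pi), T) \big)_{k=1}^T$ agree, where $c_k(\pi)$ denotes the number of $k$-cycles of $\pi$. Consequently every $\TOOB$-sentence is logically equivalent to a Boolean combination of statements of the form ``$c_k \geq j$'' for fixed $k, j \in \eN$, and all three parts of the theorem reduce to statements about the cycle counts of $\Mallows(n, q)$.

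For part~\ref{item:logi_convergence} we invoke the classical theorem of Goncharov: under the uniform distribution on $S_n$, the vector $(c_1(\Pi_n), \dots, c_K(\Pi_n))$ converges in distribution to independent Poisson random variables with means $1, 1/2, \dots, 1/K$. This immediately gives that every probability $\bP(\Pi_n \models \phi)$ converges. However $\bP(\Pi_n \models \exists x : R(x,x)) \to 1 - 1/e \notin \{0,1\}$, so the zero-one law fails.

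For part~\ref{item:logi_zero_one} I would use the cycle structure of $\Mallows(n, q)$ with $q < 1$ (as studied in \cite{MullerVerstraaten,Peled}) to argue that for each fixed $k \geq 1$, either $c_k(\Pi_n) \to \infty$ in probability or $c_k(\Pi_n) = 0$ with probability $1 - o(1)$. The intuition is that for $q < 1$ the displacement has exponential tails, so with high probability cycles are of bounded length, and a local second-moment argument using the approximate translation invariance of the model produces $\Theta(n)$ many $k$-cycles whenever the local density is positive. Either way the truncated cycle profile is deterministic in the limit, and the zero-one law then follows via the EF reduction.

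For part~\ref{item:logi_non_convergence} I would exploit the identity in distribution $\Pi_n \isd \rho \circ \Pi_n'$, where $\rho(i) = n+1-i$ is the order-reversing permutation and $\Pi_n' \sim \Mallows(n, 1/q)$; this is immediate from $\inv(\rho\sigma) = \binom{n}{2} - \inv(\sigma)$. Under this coupling the fixed points of $\Pi_n$ correspond exactly to positions $i$ with $\Pi_n'(i) = n+1-i$. Since $1/q < 1$, the probability of such a ``reverse fixed point'' at $i$ decays exponentially in $|2i - (n+1)|$, so only positions near the middle of $[n]$ contribute in the limit. For $n$ odd the central position $i^* = (n+1)/2$ contributes a term $\bP(\Pi_n'(i^*) = i^*) \to p_{\mathrm{odd}} > 0$, whereas for $n$ even no such central term exists and the leading contributions come from $i = n/2$ and $n/2+1$ with a different limiting probability. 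A Poisson-type approximation for the total count of reverse fixed points then yields that $\bP(\Pi_n \models \exists x : R(x,x))$ has distinct limits along the odd and even subsequences, contradicting convergence. The main obstacle I anticipate is in part~\ref{item:logi_zero_one}: ruling out, across the full range $q \in (0,1)$ and all $k$, the possibility that $c_k$ has a non-trivial limiting distribution (e.g.\ Poisson with positive finite mean), which is precisely what is needed to force the truncated cycle profile to be deterministic.
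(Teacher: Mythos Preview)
Your plan matches the paper's almost step for step: reduce $\TOOB$ to cycle-count data via Hanf locality, then invoke the known cycle-count asymptotics for $\Mallows(n,q)$ in each regime. A few points where you diverge from, or are less precise than, the paper:

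\textbf{The EF reduction.} Your claimed characterisation ``$\pi\equiv_r\sigma$ iff the truncated profiles $(\min(c_k,T))_{k\le T}$ agree'' is not quite right: you must also track whether there are (many) elements in long cycles. A $3$-cycle and a $3$-cycle plus a $100$-cycle have identical truncated short-cycle profiles but are easily separated at low depth. The paper states the reduction as Lemma~\ref{lem:agree_on_small}: agreement on $C_1,\dots,C_{h-1}$ \emph{together with} both permutations having at least one cycle of length $\ge h$. This does not damage your arguments for (i) and (ii), since in both cases one shows a.a.s.\ existence of a long cycle anyway (via Arratia--Tavar\'e for $q=1$, and trivially via $C_h\to\infty$ for $q<1$), but you should be aware of it.

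\textbf{Part~\ref{item:logi_zero_one}.} The paper dispatches your acknowledged obstacle by quoting Theorem~\ref{thm:normal} (the CLT of \cite{MullerVerstraaten}), which gives $C_k(\Pi_n)\sim m_k n$ with $m_k>0$ for \emph{every} $k\ge 1$. So your dichotomy ``either $c_k\to\infty$ or $c_k=0$ a.a.s.'' collapses to the first alternative for all $k$, and no case analysis is needed.

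\textbf{Part~\ref{item:logi_non_convergence}.} Your parity heuristic via $\Pi_n\isd r_n\circ\Pi_n'$ is exactly the mechanism behind Theorem~\ref{thm:odd_cycles}, which the paper simply cites. But the paper does \emph{not} claim that the specific sentence $\exists x:R(x,x)$ suffices; it only uses that the limit laws of $(C_1,C_3,\dots)$ along even and odd $n$ are distinct, and hence \emph{some} finite vector $(c_1,c_3,\dots,c_{2k+1})$ has non-convergent probability. Your stronger claim that $\bP(C_1\ge 1)$ itself has distinct limits is plausible (the ``central'' reverse-fixed-point $\Sigma(0)=0$ appears only in the odd-$n$ limit), but the Poisson approximation and the inequality of the two limiting probabilities both require real work---essentially the content of Theorem~\ref{thm:odd_cycles} specialised to $C_1$, which you have not supplied. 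The paper's existential formulation sidesteps this.
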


\noindent
(Part~\ref{item:logi_convergence} of Theorem~\ref{thm:logical_limits} was already shown by Compton~\cite{Compton89II}
in 1989, who in fact proved a much stronger statement.)

The function $\log^* n$ equals the number of times we need to iterate the base two logarithm to reach a number 
below one (starting from $n \in\eN$).

\begin{thm}\label{thm:toto}
    In $\TOTO$ the following holds:
    \begin{enumerate}
        \item\label{part:conv_toto} If $q \neq 1$ is fixed, then the convergence law holds for $\Mallows(n,q)$ but not the zero--one law;
        \item\label{part:toto_non_convergence} If $q=q(n)$ satisfies 
        $1- \frac{1}{\log^*n} < q < 1 + \frac{1}{\log^*n}$, then the $\Mallows(n,q)$ distribution does not satisfy the convergence law. 
    \end{enumerate}
\end{thm}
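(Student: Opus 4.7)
The plan is to combine the short-range structure of $\Mallows(n,q)$ for $q\neq 1$ with an Ehrenfeucht--Fra\"iss\'e (EF) game argument. For fixed $q\in(0,1)$, Mallows permutations admit the classical Gnedin--Olshanski coupling with an infinite Mallows process $\Pi_\infty$ on $\mathbb{N}$, in which the displacements $|\Pi_n(i)-i|$ have geometric tails uniformly in $i$ and $n$. Consequently, for every fixed $k$, with probability tending to $1$ the permutation $\Pi_n$ splits into short blocks (maximal intervals on which $\pi$ acts as a bijection onto itself) separated by many consecutive fixed points, and these blocks agree in distribution with those of $\Pi_\infty$. Given a $\TOTO$ sentence $\phi$ of quantifier depth $k$, I would exploit this block decomposition to produce a winning Duplicator strategy in the $k$-round EF game between $([n],<_1,<_2^{\Pi_n})$ and a suitable truncation of $\Pi_\infty$, thereby identifying the limit $\lim_n \Pee(\Pi_n\models\phi)$ with the corresponding probability under $\Pi_\infty$. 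The case $q>1$ reduces to $q<1$ via the $\TOTO$-definable reversal $\pi\mapsto(n+1-\pi(\cdot))$, which interchanges $\Mallows(n,q)$ and $\Mallows(n,1/q)$.

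Failure of the zero--one law is immediate: the introduction exhibits a $\TOTO$ sentence $\phi_0$ expressing $\pi(1)<\pi(2)$, and the standard adjacent-swap involution for Mallows (swapping $\pi(1)$ and $\pi(2)$ changes $\inv$ by exactly $\pm 1$) yields
\[
    \Pee\bigl(\Pi_n(1)<\Pi_n(2)\bigr)=\frac{1}{1+q}\in(0,1)
\]
for every $n\geq 2$ and every fixed $q\neq 1$.

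\textbf{Part (ii): non-convergence when $|q-1|<1/\log^{*}n$.}
The plan is to transfer non-convergence from the uniform case to the near-uniform Mallows case. By the theorem of Foy and Woods there exists a fixed $\TOTO$ sentence $\phi$, of some quantifier depth $K$, such that $\Pee(\Unif_n\models\phi)$ has no limit as $n\to\infty$. I would show that whenever $q=q(n)$ satisfies $|q(n)-1|<1/\log^{*}n$,
\[
    \bigl|\Pee(\Mallows(n,q(n))\models\phi)-\Pee(\Unif_n\models\phi)\bigr|\longrightarrow 0,
\]
which forces the Mallows probability to inherit the oscillatory behavior of the uniform one.

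To obtain this comparison, one invokes the EF characterization: a depth-$K$ sentence can be decided from bounded-radius Hanf-neighborhoods around any $K$ elements Spoiler chooses. A direct likelihood-ratio estimate shows that on a window of $W$ consecutive positions the restriction of $\Mallows(n,q)$ lies within total variation $O(W^{2}|q-1|)$ of $\Unif_W$. The effective window size arising in a $K$-round EF game on two linear orders is governed by a tower of exponentials in $K$, and matching this tower to the scale $n$ produces precisely the threshold $|q-1|<1/\log^{*}n$. I expect the principal obstacle to be the explicit EF coupling: one has to exhibit, round by round, a Duplicator strategy that converts local TV-closeness on each Spoiler-chosen window into genuine first-order indistinguishability, while controlling the cumulative failure probability across all $K$ rounds and across all the iterated-exponential scales on which the Foy--Woods sentence oscillates.
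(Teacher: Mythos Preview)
Your sketch shares the essential ingredient with the paper: the regeneration/block decomposition of $\Mallows(\mathbb{N},q)$ (your ``maximal intervals on which $\pi$ acts as a bijection onto itself'' are exactly the intervals between the regeneration times $T_i$). However, the paper does not argue via an EF game against a fixed truncation of $\Pi_\infty$. Instead it builds a Markov chain $M_n$ whose state records the $\equiv_d$-class of $\Pi_{R_n}$ together with the geometric variables $(Z_{R_n+1},\dots,Z_n)$ seen since the last regeneration, shows this chain has finitely many recurrent classes, each aperiodic and positive recurrent, and invokes the Markov chain convergence theorem. Your route is plausible but you would need to say more: the number of blocks tends to infinity with $n$, so coupling to a single truncation of $\Pi_\infty$ is not enough; one needs the Feferman--Vaught fact that $[\pi\oplus\sigma]_d$ depends only on $[\pi]_d$ and $[\sigma]_d$, which is exactly what the paper's Markov chain packages. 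Your failure-of-zero--one argument via $\Pee(\Pi_n(1)<\Pi_n(2))=1/(1+q)$ is fine; the paper uses $\Pee(\Pi_n(1)=1)\to 1-q$ instead.

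\textbf{Part (ii).} Here there is a genuine gap. You invoke Hanf locality (``a depth-$K$ sentence can be decided from bounded-radius Hanf-neighborhoods''), but Hanf's theorem requires bounded-degree Gaifman graphs, and in $\TOTO$ every pair of elements is related by $<_1$, so the Gaifman graph is complete and every $1$-neighborhood is the whole structure. The paper explicitly notes this (``in $\TOTO$ the degree of a permutation $\pi\in S_n$ is always equal to $n-1$''). Consequently there is no ``effective window size'' governed by a tower in $K$, and the Foy--Woods sentence genuinely uses global information about $\Pi_n$; a local TV comparison on windows cannot transfer its oscillation.

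The paper's mechanism is entirely different: it constructs, in $\TOTO$, a \emph{definable} subpermutation of the right size and applies the oscillating sentence there, via relativization. Concretely, it first proves an explicit Foy--Woods-type proposition with error $O(n^{-100})$ for $q=1$; a direct TV bound extends this to $|1-q|=O(n^{-4})$. Then for $|1-q|=O(1/n)$ it defines $K_1(\Pi_n)=\Theta(n^{1/4})$ by a $\TOTO$ formula, so that $\Pi_{K_1}$ is (approximately) $\Mallows(\Theta(n^{1/4}),q)$ with $|1-q|=O(K_1^{-4})$, and relativizes. Finally for $|1-q|<1/\log^*n$ it uses $N=\Pi_n^{-1}(1)-1=\Theta(1/|1-q|)$, again $\TOTO$-definable, and relativizes once more. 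A separate sentence $\rho$ (``$\Pi_n(1)>\Pi_n(n)$'') detects the sign of $q-1$ to merge the $q<1$ and $q>1$ cases into a single universal sentence. None of this is captured by a locality-plus-TV argument.
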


Part~\ref{part:toto_non_convergence} of Theorem~\ref{thm:toto} extends the previously mentioned result 
of Foy and Woods~\cite{Foy}.

The $\log^* n$ term is a very slowly growing function. 
As will be clear from the proof of Theorem~\ref{thm:toto}~Part~\ref{part:toto_non_convergence}, it is certainly 
not best possible and can be replaced by even more slowly growing functions, such as $\log^{**} n$ defined in 
Section~\ref{subsec:definitions}, with little effort.

We will in fact show that there exists a single $\TOTO$ formula $\phi$ such that for all 
sequences $q=q(n)$ satisfying $1-\frac{1}{\log^*n} <q < 1 + \frac{1}{\log^*n}$, the quantity 
$\bP \left( \Pi_n \models \phi \right) $ does not have a limit as $n\to\infty$.

\subsection{Organization of the paper}

To aid the reader, in the next section we give an informal, intuitive synopsis of the proofs of our main results.
In Section~\ref{sec:prelim} we collect definitions and results from the literature that we will 
rely on in our proofs. We also provide some short proofs of relatively elementary observations that we need
but are not readily available in the literature. 
In Section~\ref{sec:toob} we give the full proof of Theorem~\ref{thm:logical_limits}. 
In Section~\ref{sec:proof_conv_toto} we prove Part~\ref{part:conv_toto} of Theorem~\ref{thm:toto}.
As a preparation for the full proof of Part~\ref{part:toto_non_convergence} of Theorem~\ref{thm:toto}, 
we first give a proof in the special case when $q=1$, in Section~\ref{sec:totoqis1}.
In Section~\ref{sec:totoallq}, we then extend the result 
to the full range of $q$ stated in Part~\ref{part:toto_non_convergence} of Theorem~\ref{thm:toto}.

Finally, in Section~\ref{sec:discuss}, we offer some additional discussion and some suggestions for further work.

\section{Overview of proof methods}

\vspace{1ex}

\noindent\textbf{Proof Sketch of Theorem~\ref{thm:logical_limits}:} 
The proof of Theorem~\ref{thm:logical_limits} will rely on the following observations: 
Firstly, given any sentence $\phi \in \TOOB$ and $\pi \in S_n$ the vector \FS{of cycle counts} 
$(C_1(\pi_n),\ldots, C_n(\pi_n))$ completely determines whether or not $\pi_n\models \phi$ 
\FS{(where $C_i(\pi_n)$ denotes the number of $i$--cycles in $\pi_n$)}. 
Secondly, it follows from the Hanf-Locality Theorem for bounded degree structures (stated as Theorem~\ref{thm:hanf_general} below) 
that for any fixed $\phi \in \TOOB$ there exists an $h \in \bN$ such that for any $t \in \bN$ and $n,m\geq h$, the 
sentence $\phi$ cannot distinguish between the disjoint union of $n$ cycles of length $t$ and the disjoint 
union of $m$ cycles of length $t$. 
Moreover, this $h$ can be selected such that $\phi$ additionally cannot distinguish between two cycles 
both of length at least $h$.

For $\Pi_n\sim\Mallows(n,q)$ with $0<q<1$, we use a result given in \cite{MullerVerstraaten} by Jimmy He together with the first and last author of the current paper. They show that there are positive constants $m_1,m_2,\ldots$ depending on $q$ such that $\frac{1}{\sqrt{n}} \left(C_1(\Pi_n)-m_1 n, \ldots , C_\ell (\Pi_n) - m_{\ell} n \right)$ converges in distribution to a multivariate normal with zero mean. 
This implies in particular that $\Pi_n$ will contain more than $h$ cycles of each of the lengths $1,\ldots, h$ with probability tending to 1, giving the 
zero--one law for $0<q<1$.

For fixed $q>1$, another result (stated as Theorem~\ref{thm:odd_cycles} below) from \cite{MullerVerstraaten} implies 
the existence of a value $x=x(q)$ such that for $\Pi_n\sim\Mallows(n,q)$ the quantity 
$\bP (  C_1(\Pi_n)\geq x  ) $ does not have a limit as $n\to \infty$. 
This property can be queried by a $\TOOB$ sentence, establishing that $\Mallows(n,q)$ does not satisfy the convergence law 
w.r.t. $\TOOB$ for fixed $q >1$.

\vspace{1ex}

\noindent\textbf{Proof Sketch of Theorem~\ref{thm:toto}~Part~\ref{part:conv_toto}:} 
The proof of Theorem~\ref{thm:toto}~Part~\ref{part:conv_toto} is inspired by the approach taken by Lynch 
in \cite{Lynch} to show a convergence law for random strings having certain letter distributions. 
We consider some $\TOTO$ sentence $\phi$ having quantifier depth $d$ (quantifier depth will be defined in the next section). 
Writing $\pi \equiv_d \sigma$ if $\pi$ and $\sigma$ agree on all $\TOTO$ sentences of depth at most $d$, we 
note that $\equiv_d$ is an equivalence relation with only finitely many equivalence 
classes (Theorem \ref{lem:finite_classes}). 
We now rely on a sampling procedure for generating a sequence $\Pi_1, \Pi_2,\ldots$ of $\Mallows(n,q)$ distributed permutations
from a sequence $Z_1,Z_2,\ldots$ of independent $\Geo(1-q)$ random variables
(The details of the construction will be given in Section \ref{subsec:constr_trunc}.) 
Taking a dynamic viewpoint, we can imagine exposing the values $Z_1,Z_2,\ldots$ one by one and following the 
sequence of permutations that arises. 
Foregoing complicating details, we can define in this manner a Markov chain on a countably infinite state 
space that follows the equivalence class under $\equiv_d$ of this sequence of permutations. 
We show that we can the state space have finitely many recurrent classes, 
each of them irreducible, aperiodic and positive recurrent, and that the chain will hit such a 
class a.s. 
Standard results on the convergence of Markov chains will give the convergence of the quantity $\bP (  \Pi_n \models \phi    )  $ as $n\to\infty$.

\vspace{1ex}

\noindent\textbf{Proof Sketch of Theorem~\ref{thm:toto}~Part~\ref{part:toto_non_convergence}:} 
The proof of Theorem~\ref{thm:toto}~Part~\ref{part:toto_non_convergence} proceeds in several steps. 
We start by determining a sentence $\phi\in\TOTO$ such that for $\Pi_n\sim\Mallows(n,1)$ we have 
\begin{equation}\label{eq:cases_prb}
    \bP( \Pi_n \models \phi ) = 
    \begin{cases}
        1 - O( n^{-100} ) & \text{if } \log^{**}\log^{**}n \text{ is even}, \\
        O( n^{-100} ) &  \text{if } \log^{**}\log^{**}n \text{ is odd}.
    \end{cases},  
\end{equation}
for $n$ in a certain set that contains both an infinite number of odd and 
an infinite number of even values. 
We do not mention the additional technical conditions on the values of $n$ in this sketch. 

We stress that for the $q=1$ case both Foy and Woods~\cite{Foy} 
and Spencer~\cite{spencer1990nonconvergence} have constructed first order sentences 
whose probability of holding does not converge with $n$. 
We have however not found a way to use their proofs
``off the shelf'' as an ingredient to prove our Theorem~\ref{thm:toto}~Part~\ref{part:toto_non_convergence}.

We will show~\eqref{eq:cases_prb} by associating pairs of intervals in $\Pi_n\sim \Mallows(n,1)$ 
to directed graph structures having $N=\Theta(\log \log n)$ vertices. See Figure~\ref{fig:cherry} 
for an example permutation and pair of intervals corresponding to a directed cherry. The \FS{directed graph corresponding to the permutation} is random, and the number of pairs of intervals is sufficiently high so that with high probability we will be likely to find any directed graph on $N$ vertices. We then adapt the method developed by Spencer and Shelah in \cite{ShelahSpencer} of using graphs to model arithmetic on sets to find a sentence about these graphs that oscillates between being true and false depending on $N$, which will provide the dependency of $\bP \left( \Pi_n \models\phi \right)$ on the parity of $\log ^{**}\log^{**} n$.

We then show that if $q=1\pm O(n^{-4})$ then the total variation distance between $\Mallows(n,q)$ and $\Mallows(n,1)$ is 
$O(n^{-2})$. This allows us to extend the previous result from $q=1$ to $q=1-O(n^{-4})$, replacing the $O(n^{-100})$ terms 
in~\eqref{eq:cases_prb} by $O(n^{-2})$. We extend the result several more times but now we will have to work a little harder 
each time. 

Next, we consider the range $q = 1\pm O(1/n)$. We are able to define (in $\TOTO$)
a random variable $Z_n$ such that $n^{.1} \leq Z_n \leq n^{.2}$ with an appropriately high 
probability. Then, we restrict attention to the permutation induced by $\Pi_n$ (using the rank function 
defined in the next section) on 
$\{1,\dots,Z_n\}$. 
Rewriting $q$ in terms of $Z_n$, we have $q = 1 \pm o( |Z_n|^{-4} )$, which will allow us to exploit the previous 
case.

We subsequently extend again, now to the (asymmetric) range $1-1/\log^* n < q \leq 1 + O(1/n)$.
Here we 
consider the permutation induced on $\{1,\dots,\Pi^{-1}_n(1)-1\}$. 
It will turn out that the quantity $N := \Pi^{-1}_n(1)-1$ is of the order 
$\Theta\left( \min( n, 1/|1-q| \right)$, in an appropriate probabilistic sense. Rewriting $q$ in terms of $N$ tells us 
$q = 1 \pm O(1/N)$, which will allow us to exploit the previous case.

Finally, we will extend to the full range. 
Here the key will be that the ($\TOTO$-expressible) property that $\Pi_n(1)<\Pi_n(n)$
is able to distinguish, in an appropriate probabilistic sense, between the 
cases $q < 1-\Omega(1/n)$ and $q > 1 + \Omega(1/n)$.

\section{Notation and preliminaries\label{sec:prelim}}

We use the notation $\bN =\{1,2,\ldots\}$ and $\bZ_{\geq 0} = \{0,1,\ldots \}$.

\subsection{Requisites from probability theory\label{subsec:req_prob}}

We collect some results from probability theory that we will use in the sequel. 

\begin{thm}[Chernoff's inequality]\label{thm:chernoff}
    Let $X_1,\ldots, X_n$ be independent such that $0\leq X_i \leq 1$ for $i\in [n]$. Define $S_n \be X_1+\ldots + X_n$ and $\mu \be \bE S_n$. Let $0< \epsilon < 1$, then
    \begin{equation}
        \bP \left( S_n \geq (1  + \epsilon) \mu \right)  \leq \exp \left\{ - \frac{\mu \epsilon^2}{3} \right\},
    \end{equation}
         \begin{equation}
        \bP \left( S_n \leq (1  - \epsilon) \mu \right)  \leq \exp \left\{ - \frac{\mu \epsilon^2}{2} \right\}.
    \end{equation}
\end{thm}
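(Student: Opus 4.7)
The plan is to use the standard exponential moment (Chernoff–Cram\'er) method. The first step will be to apply Markov's inequality to $e^{tS_n}$ for a parameter $t>0$ (upper tail) or $t<0$ (lower tail), which gives $\bP(S_n \geq a) \leq e^{-ta}\bE e^{tS_n}$. By independence, $\bE e^{tS_n} = \prod_{i=1}^{n} \bE e^{tX_i}$. Since each $X_i$ lies in $[0,1]$, convexity of $x\mapsto e^{tx}$ yields the pointwise bound $e^{tX_i}\leq 1 - X_i + X_i e^t$, so taking expectations gives $\bE e^{tX_i} \leq 1 + p_i(e^t-1) \leq \exp(p_i(e^t-1))$, where $p_i \be \bE X_i$ and we have used $1+x\leq e^x$. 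Multiplying over $i$ yields the clean bound $\bE e^{tS_n}\leq \exp(\mu(e^t-1))$.

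For the upper tail, I would now plug this into Markov's inequality with $a=(1+\epsilon)\mu$ and optimize: the optimal choice is $t=\ln(1+\epsilon)>0$, giving
\[
\bP\bigl(S_n \geq (1+\epsilon)\mu\bigr) \;\leq\; \exp\!\bigl(-\mu\bigl[(1+\epsilon)\ln(1+\epsilon) - \epsilon\bigr]\bigr).
\]
The remaining task is the analytic inequality $(1+\epsilon)\ln(1+\epsilon)-\epsilon \geq \epsilon^2/3$ for $0<\epsilon<1$, which I would verify by defining $f(\epsilon) \be (1+\epsilon)\ln(1+\epsilon)-\epsilon - \epsilon^2/3$, noting $f(0)=0$, and checking $f'(\epsilon)=\ln(1+\epsilon) - 2\epsilon/3 \geq 0$ on $(0,1)$ via another derivative or the Taylor expansion $\ln(1+\epsilon)=\epsilon - \epsilon^2/2 + \epsilon^3/3 - \cdots$.

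For the lower tail the argument is symmetric: apply Markov to $e^{-tS_n}$ with $t>0$, arriving at $\bE e^{-tS_n} \leq \exp(\mu(e^{-t}-1))$, and optimize at $t = -\ln(1-\epsilon)>0$ to obtain
\[
\bP\bigl(S_n \leq (1-\epsilon)\mu\bigr) \;\leq\; \exp\!\bigl(-\mu\bigl[(1-\epsilon)\ln(1-\epsilon)+\epsilon\bigr]\bigr).
\]
Finally I would verify the sharper numerical inequality $(1-\epsilon)\ln(1-\epsilon)+\epsilon \geq \epsilon^2/2$ on $(0,1)$, again by a derivative check of $g(\epsilon) \be (1-\epsilon)\ln(1-\epsilon)+\epsilon - \epsilon^2/2$, using $g(0)=0$ and $g'(\epsilon)=-\ln(1-\epsilon)-\epsilon \geq 0$, which is immediate from the Taylor series of $-\ln(1-\epsilon)$.

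The only mildly delicate step is the pair of elementary inequalities for $(1\pm\epsilon)\ln(1\pm\epsilon)$; everything else is mechanical. No features of the problem beyond $X_i\in[0,1]$ and independence are used, so I do not expect any real obstacle.
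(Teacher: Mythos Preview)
Your proof is correct and is the standard exponential-moment derivation of the Chernoff bounds. The paper does not prove this theorem at all: it is listed in Section~\ref{subsec:req_prob} among the ``mostly well--known results from probability theory'' and is simply quoted as a prerequisite, so there is nothing to compare against. One small remark on your sketch: the Taylor-series route you mention for verifying $\ln(1+\epsilon)\geq 2\epsilon/3$ only works directly for $\epsilon\leq 2/3$, so the cleaner way to handle the full range $0<\epsilon<1$ is the second-derivative check you also allude to (noting $f''(\epsilon)=\tfrac{1}{1+\epsilon}-\tfrac{2}{3}$ changes sign at $\epsilon=1/2$, while $f'(1)=\ln 2 - 2/3>0$). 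The lower-tail inequality $g'(\epsilon)=-\ln(1-\epsilon)-\epsilon\geq 0$ is indeed immediate from the series, as you say.
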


%

We denote by $\Bi(n,p)$ the binomial distribution. We will need the following crude bound:
\begin{lem}\label{lem::binom_max}
    Let $X\sim \Bi(n, k/n)$ for $0\leq k \leq n$. Then $\bP \left( X = k \right) \geq 1/(n+1)$. 
\end{lem}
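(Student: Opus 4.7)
The plan is to show that $k$ is a mode of the $\Bi(n,k/n)$ distribution, which immediately gives the bound by pigeonhole: since $\bP(X=0),\ldots,\bP(X=n)$ are $n+1$ nonnegative values summing to one, the maximum of them is at least $1/(n+1)$.

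First I would dispose of the trivial boundary cases $k=0$ and $k=n$, in which $X$ is deterministically equal to $k$, so $\bP(X=k)=1\geq 1/(n+1)$. For $0<k<n$, I would compute the standard ratio
\[
\frac{\bP(X=i+1)}{\bP(X=i)} \;=\; \frac{n-i}{i+1}\cdot\frac{k/n}{1-k/n} \;=\; \frac{(n-i)k}{(i+1)(n-k)},
\]
and check that this is $>1$ precisely when $i\leq k-1$ and $<1$ precisely when $i\geq k$. Hence $i\mapsto \bP(X=i)$ is strictly increasing on $\{0,\ldots,k\}$ and strictly decreasing on $\{k,\ldots,n\}$, so $\bP(X=k)=\max_{0\leq i\leq n}\bP(X=i)$. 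The pigeonhole step then finishes the proof.

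There is no real obstacle here: the only content of the argument is the elementary observation that the mean $k$ of $\Bi(n,k/n)$ is also a mode, which follows from a one-line computation of consecutive probability ratios. The bound $1/(n+1)$ itself is then simply the worst case of distributing unit mass over the support $\{0,1,\ldots,n\}$.
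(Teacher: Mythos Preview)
Your proof is correct and follows essentially the same approach as the paper: both compute the ratio of consecutive binomial probabilities to show that $k$ is the mode of $\Bi(n,k/n)$, then invoke pigeonhole over the $n+1$ possible outcomes. The only cosmetic difference is that you index the ratio as $\bP(X=i+1)/\bP(X=i)$ whereas the paper writes $\bP(X=j)/\bP(X=j-1)$, and you also explicitly handle the $k=0$ boundary case.
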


\begin{proof}
    If $k=n$ then the result is clear, so assume that $k\neq n$. Setting $p=k/n$ we have for all $1\leq j \leq  n$ that
    \begin{align}
        \frac{\bP \left(X= j \right)}{ \bP \left(X= j-1\right) } &= \frac{ \binom{n}{j} (1-p)^{n-j}p^{j}}{ \binom{n}{j-1} (1-p)^{n-j+1}p^{j-1} }=\frac{ (n-j+1) p}{j (1-p)}.
    \end{align}
    The right--hand side above is less than one if and only if 
    \begin{align}
        j-jp > (n-j+1)p \quad\iff\quad j > \frac{(n+1)k}{n} \quad\iff \quad j \geq k + 1,
    \end{align}
    the last equivalence holding due to $j$ being an integer and $k\neq n$. Thus $\bP \left( X = j\right)$ is maximized at $j=k$. As there are only $n+1$ possible outcomes for $X$ we therefore must have $\bP \left(X=k\right) \geq 1/(n+1)$.
\end{proof}

Given two discrete probability distributions $\mu_1$ and $\mu_2$ on a countable set $\Omega$, their total variation distance is defined as

\begin{equation}\label{eq:def_dtv}
   \dtv(\mu_1,\mu_2) = \max_{ A \subseteq \Omega  }| \mu_1(A) - \mu_2(A) |.  
\end{equation}

\noindent
This can be expressed alternatively as 

\begin{equation}\label{eq:dtvalt} 
\dtv(\mu_1,\mu_2) = \frac12 \sum_{x\in \Omega} |  \mu_1(x) - \mu_2(x) |
= \sum_{x : \mu_1(x)>\mu_2(x) } \mu_1(x)-\mu_2(x). 
\end{equation}

\noindent
(See for instance Proposition 4.2 in \cite{Peres} for a proof.) As is common, we will interchangeably use the notation $\dtv(X,Y) := \dtv(\mu,\nu)$ if $X\sim\mu$ and $Y\sim\nu$.

A {\em coupling} of two probability measures $\mu, \nu$ is a joint probability measure for a pair of random variables $(X,Y)$ satisfying $X\isd\mu, Y\isd\nu$. We will also speak of a coupling of $X,Y$ as being a probability space for $(X',Y')$ with $X'\isd X, Y'\isd Y$. 

\begin{lem}\label{lem:coupling}
    Let $\mu$ and $\nu$ be two probability distributions on the same countable set $\Omega$. Then
    \begin{equation}
    \dtv(\mu, \nu) = \min \{ \bP \left(  X\neq Y\right) \mid (X,Y) \text{ is a coupling of $\mu$ and $\nu$}  \}.
    \end{equation}
\end{lem}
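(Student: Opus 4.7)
The plan is to prove the identity in two halves: a lower bound valid for every coupling, and an explicit coupling that realizes that bound.

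First I would establish the easy inequality $\dtv(\mu,\nu) \le \inf \bP(X \ne Y)$. Fix any coupling $(X,Y)$ with $X \isd \mu, Y \isd \nu$, and let $A \subseteq \Omega$ be arbitrary. Writing
\begin{equation}
\mu(A) - \nu(A) = \bP(X \in A) - \bP(Y \in A) = \bP(X \in A,\, Y \notin A) - \bP(X \notin A,\, Y \in A),
\end{equation}
the first term on the right is at most $\bP(X \ne Y)$, so $\mu(A)-\nu(A) \le \bP(X \ne Y)$. By the same argument applied to $\Omega \setminus A$, we get $|\mu(A) - \nu(A)| \le \bP(X \ne Y)$, and taking the maximum over $A$ yields $\dtv(\mu,\nu) \le \bP(X \ne Y)$. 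Taking the infimum over couplings finishes this direction.

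Next I would construct an explicit coupling that attains equality (the so-called maximal coupling). Set $p \be \sum_{x \in \Omega} \min(\mu(x), \nu(x))$; using the second form of $\dtv$ given in \eqref{eq:dtvalt} one checks $p = 1 - \dtv(\mu,\nu)$. If $p=1$ the two distributions coincide and the diagonal coupling works, so assume $0 \le p < 1$. Define three probability measures on $\Omega$:
\begin{equation}
\gamma(x) \be \tfrac{1}{p}\min(\mu(x),\nu(x)), \qquad \alpha(x) \be \tfrac{1}{1-p}(\mu(x) - \min(\mu(x),\nu(x))), \qquad \beta(x) \be \tfrac{1}{1-p}(\nu(x) - \min(\mu(x),\nu(x))).
\end{equation}
(If $p = 0$, skip $\gamma$.) Now sample a Bernoulli variable $B$ with $\bP(B=1)=p$. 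If $B=1$, draw $Z \sim \gamma$ and put $X=Y=Z$; if $B=0$, independently draw $X \sim \alpha$ and $Y \sim \beta$ (these have disjoint supports, so automatically $X \ne Y$). A short computation confirms $X \isd \mu$ and $Y \isd \nu$, while $\bP(X \ne Y) = 1 - p = \dtv(\mu,\nu)$.

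Combining the two halves gives both the equality and the fact that the infimum is attained. There is no real obstacle here; the only mild subtlety is verifying that $\alpha$ and $\beta$ are genuine probability measures with disjoint supports when $p<1$, which follows directly from the definition $\alpha(x) > 0 \Leftrightarrow \mu(x) > \nu(x)$ and symmetrically for $\beta$.
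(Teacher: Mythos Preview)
Your proof is correct and is precisely the standard maximal-coupling argument; the paper itself does not give a proof but simply refers the reader to Proposition~4.7 and Remark~4.8 of~\cite{Peres}, where exactly this construction appears. So your proposal matches the cited approach.
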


\noindent
(See for instance~\cite{Peres}, Proposition 4.7 and Remark 4.8 for a proof.)

For $n\in \bN$ and $p\in (0,1)$ we define the $\TGeo(n,p)$ distribution where $Y\sim \TGeo(n,p)$ means 
\begin{equation}\label{eq:mass_geometric}
   \bP( Y = k ) = \frac{p(1-p)^{k-1}}{1-(1-p)^n},\qquad k \in \{1,\ldots, n\}.
\end{equation}
Observe that, setting $X \sim\Geo(p)$, the probability mass in \eqref{eq:mass_geometric} 
is exactly equal to $\bP \left( X = k\, |\, X \leq n \right) $. 
Therefore we shall refer to the $\TGeo(n,p)$ as the truncated geometric distribution. 

\subsubsection{Markov chains\label{sec:markov}}

We give a brief overview of some of the concepts related to Markov chains which will be used in 
Section \ref{sec:proof_conv_toto}. The definitions and results that follow are widely known and 
can be found in most books on Markov chains or more general stochastic processes, see for 
instance \cite{Peres}, \cite{Grimmett} or \cite{Markov}.

Let $X_n$ be a Markov chain with state space $\cS$. Given $i,j\in \cS$, if 
$\bP \left( X_n = j\, |\, X_0 = i \right) > 0$ for some $n\in \bZ_{\geq 0}$ we say that 
\emph{$j$ is reachable from $i$}. 
We denote this by $i\to j$. If $i\to j$ and $j\to i$ then we say that \emph{the states $i$ and $j$ communicate} 
and we write $i\leftrightarrow j$. The relation $\leftrightarrow$ is an equivalence relation, the equivalence classes under $\leftrightarrow$ will be called \emph{communicating classes}. For every state $i\in \cS$ we have $i\leftrightarrow i$. In the case that the entire state space is a single communicating class we call the Markov chain \emph{irreducible}. The evolution of a Markov chain depends on the initial distribution over the states. 
We use the notations $\bP_i (\cdot)$ and $\bE_i (\cdot)$ for probabilities and expectations
with respect to the Markov chain started in state $X_0=i$. If $X_0$ is distributed according to some distribution 
$\mu_0$ over $\cS$ then we write $\bP_{\mu_0}$ and $\bE _{\mu_0}$. 

For any $i \in \cS$ we define the {\em hitting time} 
$\tau_i := \inf\{ n\geq 0 : X_n=i \}$ 
to be the first time that the Markov chain is in state $i$, and we 
define the {\em first visit time}
$\tau^+_i := \inf\{ n\geq 1 : X_n= i \}$ to be the first time after zero that the Markov chain is in state $i$.
If $\bP_i ( \tau^+ _i < \infty ) = 1$ then we say that the state $i$ is \emph{recurrent}, otherwise we say that 
it is \emph{transient}. 
If additionally $\bE _i \tau^+_i <\infty$ then we say that $i$ is \emph{positive recurrent}. 
If $\cS$ is finite then recurrence implies positive recurrence. If $i$ is transient, then all $j$ in the communicating 
class containing $i$ are transient, and similarly if $i$ is (positive) recurrent. 
Thus we will talk about positive recurrent classes, etc. 
If $X_n$ is in a recurrent class for some $n$, then $X_{n+k}$ is 
contained in this class for all $k \geq 0$. 
The {\em period} of a state is the greatest common divisor of the set $\{n\geq 1 : \Pee_i(X_n= i)> 0\}$.
A state is {\em aperiodic} if the period equals one. A communicating class or chain is called aperiodic if all of its states 
are.
Proofs for the following two results are given in Theorem~4 and Lemma~5, respectively, in Section 6.3 of \cite{Grimmett}.

\begin{thm}\label{thm:partition_state_space}
    The state space $\cS$ of a Markov chain can be partitioned uniquely as
    \begin{equation}
        \cS = T \cup C_1 \cup C_2 \cup \ldots
    \end{equation}
    where $T$ is the set of transient states, and the $C_i$ are recurrent communicating classes. 
\end{thm}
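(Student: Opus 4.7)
The plan is to build the desired partition from the equivalence classes under the communicating relation $\leftrightarrow$, exploiting the fact that recurrence is a class property.

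First I would partition $\cS$ into its equivalence classes under $\leftrightarrow$; these are exactly the communicating classes, and they yield a unique partition simply because $\leftrightarrow$ is an equivalence relation. The key observation is that recurrence is a class property: if $i \leftrightarrow j$ then $i$ is recurrent if and only if $j$ is. The standard route is to use the characterization that $i$ is recurrent precisely when $\sum_{n\geq 0} \bP_i(X_n = i) = \infty$, combined with the estimate
\[
\bP_j(X_{m+n+\ell}=j) \;\geq\; \alpha\beta\cdot \bP_i(X_n=i),
\]
where $\alpha \be \bP_i(X_m=j)>0$ and $\beta \be \bP_j(X_\ell=i)>0$ are obtained from $i\leftrightarrow j$. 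Summing over $n$ shows that the two series diverge together, so $i$ and $j$ are recurrent or transient together.

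Given this, I would let $T$ be the union of all transient communicating classes and enumerate the recurrent communicating classes as $C_1, C_2, \ldots$. It remains to verify that each $C_k$ is closed under the dynamics. For this, take any $i \in C_k$ and any $j$ with $i \to j$. If $j \not\to i$, then starting from $i$ there would be positive probability to reach $j$ in finitely many steps and thereafter never return to $i$, contradicting the recurrence of $i$, i.e.\ $\bP_i(\tau^+_i < \infty) = 1$. Hence $j \to i$ as well, so $j \leftrightarrow i$ and $j \in C_k$, proving $C_k$ is closed. Uniqueness of the overall partition is inherited directly from the uniqueness of the equivalence classes of $\leftrightarrow$, since any partition of the required form must split $\cS$ along these classes (transient ones aggregated into $T$, recurrent ones labelled as the $C_k$).

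The main obstacle is the class-property argument for recurrence: this requires the slightly delicate series comparison sketched above, using concatenation of excursions through $i$ and $j$. Everything else is routine bookkeeping from the definitions and from the fact that a recurrent state cannot possess an escape route to a state with no return path.
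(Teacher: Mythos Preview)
The paper does not supply its own proof of this theorem; it simply cites Theorem~4 in Section~6.3 of Grimmett. Your argument is precisely the standard textbook route taken there: partition $\cS$ into communicating classes, use the series comparison to show recurrence is a class property, and verify that recurrent classes are closed by the ``no escape route'' contradiction. So your proposal is correct and matches the cited source.
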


\begin{lem}\label{lem:at_least_one_recurrent}
    If $\cS$ is finite, then at least one state is recurrent and all recurrent states are positive recurrent.
\end{lem}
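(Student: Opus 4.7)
The plan is to handle the two assertions separately, both by contradiction, using only the basic identity $\sum_{j\in\cS}\bP_i(X_n=j)=1$ (valid for every $n$ and every starting state $i$) together with a couple of classical auxiliary facts about recurrence.

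For the first assertion, I would suppose towards contradiction that every state in $\cS$ is transient. Transience of $j$ is equivalent to the Green's function $G(i,j):=\sum_{n\geq 0}\bP_i(X_n=j)$ being finite for every $i$ (a consequence of the strong Markov property applied at successive visits to $j$, together with the geometric distribution of the number of visits). Summing the identity $\sum_{j\in\cS}\bP_i(X_n=j)=1$ over $n\geq 0$ then gives $\sum_{j\in\cS} G(i,j) = \infty$, whereas finiteness of $\cS$ and of each $G(i,j)$ forces this double sum to be finite. This contradiction shows that at least one state is recurrent.

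For the second assertion, fix any irreducible closed communicating class $C$ of recurrent states from the partition of Theorem~\ref{thm:partition_state_space}; since $\cS$ is finite, so is $C$. Restricted to $C$, the chain is finite, irreducible and recurrent, and I need to show that every state in $C$ is positive recurrent. I would argue by contradiction, assuming some $i\in C$ is null recurrent, i.e.\ $\bP_i(\tau_i^+<\infty)=1$ but $\bE_i\tau_i^+=\infty$. A standard renewal-theoretic argument then gives $\bP_i(X_n=j)\to 0$ as $n\to\infty$ for every $j\in C$ (irreducibility of $C$ is used here, to transfer the conclusion from $j=i$ to all other $j\in C$ via a first-passage decomposition). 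On the other hand, $\sum_{j\in C}\bP_i(X_n=j)=1$ for every $n$, because $C$ is closed and $i\in C$; combined with $|C|<\infty$ this contradicts the pointwise convergence to zero.

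The main obstacle is not the overall structure of the argument but rather the two classical auxiliary facts invoked above: the Green's-function characterisation of transience, and the implication from $\bE_i\tau_i^+=\infty$ to $\bP_i(X_n=j)\to 0$ for every $j\in C$. I would not reprove these, as they are covered in the general references already cited in the excerpt (\cite{Peres,Grimmett,Markov}); the role of the proof is merely to combine them with the finiteness of $\cS$ (respectively of $C$) in the two short pigeonhole-style contradictions described above.
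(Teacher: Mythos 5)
The paper gives no proof of this lemma; it is stated as a standard fact with a citation to \cite{Grimmett} (Section~6.3, Lemma~5), so there is no in-paper argument to compare against. Your proof is correct and is essentially the classical textbook argument the citation is pointing to. For the first assertion, the pigeonhole via $\sum_{j\in\cS}\sum_{n\ge 0}\bP_i(X_n=j)=\sum_{n\ge0}1=\infty$ versus the finiteness of each Green's function $G(i,j)$ is exactly the standard route, and you invoke the right characterisation of transience. For the second, the reduction to a single finite closed irreducible recurrent class $C$ (using Theorem~\ref{thm:partition_state_space}) and the contradiction with $\sum_{j\in C}\bP_i(X_n=j)=1$ are sound; the one nontrivial ingredient is the classical fact that null recurrence of $j$ forces $\bP_i(X_n=j)\to 0$, which holds even for periodic chains via the lattice renewal theorem, and you are right to treat it as an imported auxiliary result rather than reprove it. Both parts combine cleanly with finiteness of $\cS$ (respectively of $C$) as you describe, so the proposal is a valid self-contained proof of the lemma the paper delegates to a reference.
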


\begin{lem}\label{lem:hit_recurrent}
    If $\cS$ is finite then with probability $1$ the chain $X_n$ will be in a recurrent state for some $n\geq 0$. 
\end{lem}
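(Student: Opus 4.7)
The plan is to leverage the partition given by Theorem~\ref{thm:partition_state_space} together with Lemma~\ref{lem:at_least_one_recurrent} and the defining property of transient states. Write $\cS = T \cup C_1 \cup C_2 \cup \ldots$; since $\cS$ is finite, $T$ is a finite set and there are only finitely many recurrent classes $C_j$. The goal is to show that the chain almost surely leaves $T$ in finite time, which (since each $C_j$ is closed) amounts to the statement of the lemma.

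The main step is a uniform bound on the number of visits to any transient state. Concretely, I would first show that for any transient $i \in T$ and any starting state $k \in \cS$, the number $V_i \be |\{n \geq 0 : X_n = i\}|$ of visits to $i$ has finite expectation under $\bP_k$. This follows from the strong Markov property: if $p_i \be \bP_i(\tau_i^+ < \infty) < 1$ (which holds exactly because $i$ is transient), then starting from the first visit to $i$ (if it happens), the chain returns to $i$ a geometric number of times with mean $1/(1-p_i) < \infty$. Hence $\bE_k V_i \leq 1/(1-p_i) < \infty$, which in particular forces $\bP_k(X_n = i \text{ infinitely often}) = 0$.

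Next, take a union bound over the finitely many $i \in T$ to conclude $\bP_k(X_n \in T \text{ infinitely often}) = 0$. Equivalently, almost surely there exists $N < \infty$ such that $X_n \notin T$ for all $n \geq N$, i.e.\ $X_N$ lies in one of the recurrent classes $C_j$. This gives the desired conclusion. (As a by-product, since each $C_j$ is closed, from time $N$ onwards the chain stays in a single recurrent class, though this stronger statement is not needed for the lemma.)

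The only mildly delicate point is the strong Markov argument establishing finiteness of $\bE_k V_i$ uniformly in $k$; everything else is a straightforward union bound over the finite set $T$. There is no obstacle here beyond citing standard material, so the plan is essentially a sketch of the textbook proof.
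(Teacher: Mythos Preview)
Your argument is correct and complete. It differs from the paper's proof, which instead argues that from any $i\in T$ there is a uniformly positive probability of leaving $T$ within $|T|$ steps, yielding $\bP(X_n\in T)\leq Kc^n$ for some $0<c<1$, and then applies Borel--Cantelli. Your route is perhaps more conceptual: you use directly the defining property of transience (return probability $p_i<1$) together with the strong Markov property to bound $\bE_k V_i\leq 1/(1-p_i)<\infty$, and then take a finite union over $i\in T$. The paper's approach yields a quantitative exponential decay rate for $\bP(X_n\in T)$, which is not needed here; your approach avoids the small step of exhibiting an escape path of length at most $|T|$. Both are standard textbook arguments and equally valid for this lemma.
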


\begin{proof}
    Let $\cS = T \cup C_1 \cup C_2 \cup \ldots$ be as in Theorem~\ref{thm:partition_state_space}. 
    For each of the finitely many $i\in T$ there is a positive probability that the chain moves from $i$ to a 
    state in $\cS \setminus T$ in at most $|T|$ steps. Then $\bP \left( X_n \in T \right) \leq K c^n$ for 
    some constants $K>0$ and $0<c<1$. 
    By the Borel--Cantelli Lemma, $X_n$ is in $T$ for only finitely many $n$ with probability one, so 
    that it must visit a recurrent state. 
\end{proof}

The following theorem is the Markov chain convergence theorem, see e.g.~\cite{DoucMarkov} Theorem 7.6.4 for a proof (note that in \cite{DoucMarkov}, what we call irreducibility is called strong irreducibility).

\begin{thm}[Markov chain convergence theorem]\label{thm:markov_convergence}
    Consider an irreducible, aperiodic and positive recurrent Markov chain 
    on a countable state space $\cS$.
    There exists a unique invariant probability measure $\pi$ over $\cS$ such that for every probability measure $\mu_0$ over 
    $\cS$:
    \begin{equation}
        \dtv(X_n,\pi) \xrightarrow[n\to\infty]{} 0
    \end{equation}
    where $X_0 \isd \mu_0$ and $(X_n)_{n\geq 0}$ denotes the evolving sequence of states of the Markov chain. 
\end{thm}

We will need the following result in Section~\ref{sec:proof_conv_toto}. 
It is alluded to in many texts on Markov chains when discussing the convergence behavior
of a Markov chains that are not irreducible. We have not found an appropriate reference however. 
So we present a short proof. 

\begin{lem}\label{lem:convergence_of_class}
    Let $X_n$ be a Markov chain on a countable state space $\cS$, and let $\cM \subseteq \cS$ be an aperiodic and positively 
    recurrent communicating class.
    Then the limit 
    \begin{equation}
        \lim_{n\to \infty}  \bP \left( X_n \in A  \right) 
    \end{equation}
    exists for all $A\subseteq \cM$. 
\end{lem}

\begin{proof}
    We define $\tau_{\cM} := \inf\{ n\geq 0 : X_n \in \Mcal \}$.
    We have 
    \begin{equation}
        \bP \left( X_n \in A \right) = \sum\limits_{j=0}^\infty \bP \left( X_n\in A\, | \, \tau_{\cM} = j \right)  
        \bP \left( \tau_{\cM}=j \right) .
    \end{equation}
    (We use that $X_n \in A$ can only happen if the chain enters $\cM$ at some point.)
    Conditioning on $\tau_{\cM} = j$ induces a probability distribution $\nu_{j}$ over the states in $\cM$ 
    where for $B\subseteq \cM$ we have $\nu_{j} ( B ) = \bP \left( X_j \in B\, |\, \tau_{\cM}=j  \right)$. 
    Consider the Markov chain we get by discarding all states outside of $\Mcal$ and 
    leaving the transition probabilities inside $\Mcal$ as is. This is a valid chain, since 
    states in $\Mcal$ can only reach states inside $\Mcal$, as $\Mcal$ is recurrent.
    This ``reduced chain'' is clearly aperiodic, irreducible and positive recurrent. 
    So by Theorem~\ref{thm:markov_convergence} there exists 
    a unique distribution $\pi$ over $\cM$ such that for all $j\in \bZ_{\geq 0}$
    \begin{equation}\label{eq:ergkjnerjkgr}
        \lim_{n\to \infty}\bP \left( X_n\in A\,|\, \tau_{\cM}=j \right) 
        = \lim_{n\to \infty}\bP_{\nu_{j}} \left( X_{n-j}\in A \right) = \pi(A). 
    \end{equation}
    Now 
    
    $$ \left|\Pee\left( X_n \in A \right) - \pi(A)\cdot\Pee( \tau_{\cM} < \infty ) \right| 
    \leq \sum_{j=0}^\infty \left| \Pee\left( X_n \in A \, |\, \tau_{\cM} = j \right) - \pi(A) \right| \cdot
    \Pee( \tau_{\cM} = j ) \xrightarrow[n\to\infty]{} 0, $$
    
    \noindent
    where the limit follows by dominated convergence (each term of the summand tends to zero separately, 
    and the sum is bounded by $\sum_j \Pee( \tau_{\cM} = j ) = \Pee( \tau_{\cM} < \infty ) \leq 1$).
\end{proof}

\subsection{Mallows permutations\label{subsec:useful_lit}\label{subsec:constr_trunc}}

It is a standard result in enumerative combinatorics (see Corollary 1.3.13 in~\cite{StanleyVol1}) that for $q\neq 1$
the denominator in the definition of the Mallows distribution~\eqref{eq:Mallowsdef} satisfies 

\begin{equation}\label{eq:Znq}
    \sum_{\sigma\in S_n} q^{\inv(\sigma)} = \prod_{i=1}^n \frac{1-q^i}{1-q}.  
\end{equation}

We define for $n\geq 1$ the permutation $r_n : i \mapsto n- i + 1$. For every $\pi \in S_n$ we have
\begin{align}
    \inv (r_n\circ \pi)& = \inv (\pi \circ r_n) = \binom{n}{2}-\inv(\pi),\\
    \inv(\pi^{-1}) &= \label{eq:inversions_of_inverse}\inv(\pi).
\end{align}
From this it can be seen that, if $\Pi_n\sim \Mallows(n,q)$, then 

$$ \Pi^{-1}_n\isd \Pi_n 
\quad \text{ and } \quad 
r_n\circ \Pi_n \isd  \Pi_n\circ r_n \isd \Mallows(n,1/q). $$ 

For $X = \{x_1,\dots,x_n\}$ a set of distinct numbers and $x \in X$ we define the {\em rank} $\rk(x,X)$ of $x$ in $X$ as the unique $i$ such that $x$ is the $i$-th smallest element of $X$. For $x = (x_1,\dots,x_n)$ a sequence of distinct numbers, let us write 

\begin{equation}\label{eq:rank_definition}
    \rk(x_1,\dots,x_n) := \left(\rk(x_1,\{x_1,\dots,x_n\}), \dots, \rk(x_n,\{x_1,\dots,x_n\}) \right).
\end{equation}
With some abuse of notation we will sometimes write $\pi = \rk(x_1,\ldots, x_n)$ to mean that $\pi\in S_n$ is the permutation satisfying $\pi(i) = \rk(x_i, \{x_1,\ldots, x_n\})$ for all $i\in [n]$.

\begin{lem}[\cite{BhatnagarPeled2015}, Corollary 2.7]\label{lem:prefix_mallows}
If $\Pi_n \sim \Mallows(n,q)$ and $1 \leq i < j \leq n$ then
    \begin{equation}
        \rk ( \Pi_n(i),\Pi_n(i+1),\ldots,\Pi_n(j-1),\Pi_n(j) )\isd \Mallows(j-i+1,q).
    \end{equation}
\end{lem}

In \cite{Gnedin} Gnedin and Olshansky introduced a random bijection $\Sigma :\bZ \to \bZ$ which 
in a suitable sense 
is an extension of the finite $\Mallows(n,q)$ model with $0<q<1$. 
We denote this distribution by $\Mallows( \bZ, q)$ where $0<q<1$. 
We will not need the details of the construction and refer the reader to the original paper for a detailed description. 

In \cite{MullerVerstraaten}, Jimmy He together with the first and last author of the current paper studied the limit behavior of the cycles counts of $\Mallows(n,q)$ distributed random permutations for fixed $q\neq 1$. Given a permutation $\pi \in S_n$ we define $C_i(\pi)$ to be the number of $i$--cycles of $\pi$ for $i\geq 1$. We will use the following results:

\begin{thm}[\cite{MullerVerstraaten}, Theorem 1.1]\label{thm:normal}
Fix $0<q<1$ and let $\Pi_n \sim \Mallows(n,q)$. There exist positive constants $m_1,m_2,\ldots $ and an infinite matrix $P\in \bR^{\bN \times \bN}$ such that for all $\ell\geq 1$ we have

\begin{equation}
\frac{1}{\sqrt{n}} \left(C_1(\Pi_n)-m_1 n, \ldots , C_\ell (\Pi_n) - m_{\ell} n \right) \quad 
\xrightarrow[n\to\infty]{d}\quad \cN_\ell(\vec{0}, P_\ell),
\end{equation}
where $\cN_\ell(\cdot,\cdot)$ denotes the $\ell$--dimensional multivariate normal distribution and $P_\ell$ is the submatrix of $P$ on the indices $[\ell] \times [\ell]$.
\end{thm}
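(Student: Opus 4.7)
The plan is to establish the CLT by reducing to a classical CLT for sums of i.i.d.\ random vectors, using a regenerative block decomposition of the infinite Gnedin--Olshanski permutation $\Sigma\sim\Mallows(\bZ,q)$. As a first step one couples $\Pi_n$ with $\Sigma$ by exploiting the geometric-variable sampling mentioned in the overview (Section~\ref{subsec:constr_geom}): because $0<q<1$, the displacements $|\Sigma(i)-i|$ have exponentially decaying tails uniformly in $i$, so with probability $1-o(1)$ one has $\Pi_n(i)=\Sigma(i)$ for every $i$ in a central window $[K,n-K]$ with $K=O(\log n)$. The cycle counts of $\Pi_n$ and of $\Sigma|_{[1,n]}$ therefore differ by at most $O(\log n)$ deterministically on this good event, which is negligible at the $\sqrt{n}$ scale, and it suffices to prove the CLT for $\Sigma|_{[1,n]}$.

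The heart of the argument is the block structure of $\Sigma$. Call $k\in\bZ$ a \emph{breakpoint} if $\Sigma(\{j:j\leq k\})=\{j:j\leq k\}$, i.e.\ no cycle crosses the cut between $k$ and $k+1$. Using the exponential decay of displacements one shows that, almost surely, the set of breakpoints is a stationary renewal process on $\bZ$ with a positive, finite intensity, and that the gap between consecutive breakpoints has a finite exponential moment. Between two consecutive breakpoints $\Sigma$ restricts to an independent block $B$, whose law depends only on its length; under the Palm measure of the breakpoint process the sequence of blocks $B_1,B_2,\ldots$ is i.i.d. Writing
\begin{equation}
C_i(\Sigma|_{[1,n]})=\sum_{b:\,B_b\subseteq[1,n]}C_i(B_b)+E_i(n),
\end{equation}
the boundary error $E_i(n)$ has $O(1)$ second moment by the exponential gap bound, and the number of complete blocks contained in $[1,n]$ is $\Theta(n)$. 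A classical multivariate renewal CLT applied to the i.i.d.\ vectors $(C_1(B_b),\ldots,C_\ell(B_b))$, combined with Wald-type identities, then yields the Gaussian limit, with $m_i=\Ee[C_i(B)]/\Ee[|B|]$ and $P_\ell$ built from block-level covariances through the standard renewal-theoretic variance formula. Consistency across $\ell$ is automatic in this construction, so the family $\{P_\ell\}$ defines a single matrix $P\in\bR^{\bN\times\bN}$. Positivity of each $m_i$ follows because a single block consisting of one $i$-cycle occurs with positive probability.

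The principal obstacle is establishing the breakpoint renewal structure rigorously --- in particular, ruling out the a priori possible scenario that a single long cycle of $\Sigma$ straddles a macroscopic portion of $\bZ$, and quantifying the gap distribution. The natural route is to pair a two-sided analogue of Lemma~\ref{lem:prefix_mallows} with the geometric-variable description of $\Sigma$ to derive the required exponential tail bounds on displacements and on breakpoint gaps. A secondary technical point is making the coupling between $\Pi_n$ and $\Sigma|_{[1,n]}$ tight enough that the conditioning implicit in restricting $\Sigma$ to a finite interval perturbs the law only in an $O(\log n)$ window near each endpoint, so that the boundary contribution does not shift the asymptotic covariance.
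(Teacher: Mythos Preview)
The paper does not contain a proof of this statement: Theorem~\ref{thm:normal} is quoted verbatim from~\cite{MullerVerstraaten} and used as a black box in the proof of Theorem~\ref{thm:logical_limits}~Part~\ref{item:logi_zero_one}. So there is no ``paper's own proof'' to compare your attempt against.

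That said, your outline is broadly in the spirit of the argument in~\cite{MullerVerstraaten}, with one structural difference worth flagging. The regenerative decomposition there is carried out for the one-sided object $\Pi\sim\Mallows(\bN,q)$ (cf.\ Section~\ref{subsec:constr_geom} and Proposition~\ref{prop:positive_recurrent_perm} of the present paper), using the renewal times $T_i=\inf\{t>T_{i-1}:\Pi[[t]]=[t]\}$, together with the exact identity $\Pi_n=\rk(\Pi(1),\dots,\Pi(n))$ from Lemma~\ref{lem:basu_Pin}. This avoids your coupling step between $\Pi_n$ and $\Sigma|_{[1,n]}$ entirely: no boundary comparison is needed on the left, and the right boundary contributes a single incomplete block whose cycle counts are controlled by the finite moments of $T_1$ (Lemma~\ref{lem:finite_moments_T}). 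Your route via the two-sided $\Sigma\sim\Mallows(\bZ,q)$ can be made to work, but the coupling of $\Pi_n$ to $\Sigma|_{[1,n]}$ is not as immediate as you suggest, since $\Sigma|_{[1,n]}$ is not a permutation of $[1,n]$ and the relationship between its cycle structure and that of $\Pi_n$ requires more than ``agreement on a central window''; the one-sided construction sidesteps this entirely. Once the i.i.d.\ block structure is in place, both approaches finish the same way, via a multivariate CLT for i.i.d.\ summands indexed by a renewal process.
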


We define the bijections $r,\rho :\bZ \to \bZ$ by $r(i) = -i$ and $\rho(i) = 1-i$. 

\begin{thm}[\cite{MullerVerstraaten}, Theorems 1.3 and 1.7]\label{thm:odd_cycles}
Let $q>1$ and $\Pi_{n}\sim \on{Mallows}(n, q)$ and $\Sigma \sim\on{Mallows}(\bZ, 1/q)$. We have 
$$ (C_1(\Pi_{2n+1}), C_3(\Pi_{2n+1}),\ldots )
 \xrightarrow[n\to\infty]{d}    \quad (C_1(r \circ \Sigma), C_3(r \circ \Sigma),\ldots ) 
 $$ 
and 
$$ (C_1(\Pi_{2n}), C_3(\Pi_{2n}),\ldots )   \xrightarrow[n\to\infty]{d}    
\quad (C_1(\rho \circ \Sigma), C_3(\rho \circ \Sigma),\ldots ). 
$$ 
Moreover, the two limit distributions above are distinct for all $q>1$.
In particular, there exists a value $x=x(q)$ such that $\Pee( C_1(\rho\circ\Sigma)\geq x ) \neq \Pee( C_1(r\circ\Sigma)\geq x )$.
\end{thm}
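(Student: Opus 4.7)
\vspace{5pt}
\noindent\textbf{Proof proposal.}
The plan is to exploit reversal symmetry to reduce to the $q<1$ regime, where the $\Mallows(n,1/q)$ measure is strongly localized. Let $\Pi'_n \be r_n\circ\Pi_n \sim \Mallows(n,1/q)$, so that $\Pi_n(i)=n+1-\Pi'_n(i)$. A $k$-cycle $(i_1,\dots,i_k)$ of $\Pi_n$ is equivalent to the system of equations $\Pi'_n(i_j)=n+1-i_{j+1}$ (indices mod $k$). Since $1/q<1$, the result of Bhatnagar--Peled (Theorem~\ref{thm:bhat_peled}, referenced in the overview) gives exponential tail bounds on $|\Pi'_n(i)-i|$, so each equation forces $i_j+i_{j+1}\approx n+1$. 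When $k$ is odd this chain of constraints telescopes to $2i_j\approx n+1$ for every $j$, i.e.\ every element of an odd cycle lies within a window of size $O(\log n)$ (say) around the centre $(n+1)/2$.

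Second, I would perform a change of coordinates to centre the permutation. For $n=2m+1$, re-indexing by $j=i-(m+1)$ identifies $[n]$ with $\{-m,\dots,m\}$ and turns $r_n$ into the map $j\mapsto -j$, namely $r$. For $n=2m$, centring at $m$ turns $r_n$ into $j\mapsto 1-j$, namely $\rho$. A standard consequence of the Gnedin--Olshanski construction is that the centred $\Pi'_n$ converges locally in distribution to $\Sigma\sim\Mallows(\bZ,1/q)$, in the sense that for every fixed window $[-L,L]$ the restriction of the centred $\Pi'_n$ to the preimage of $[-L,L]$ agrees with $\Sigma|_{[-L,L]}$ with probability $1-o(1)$ (using the same localization bounds as above to handle values escaping the window).

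Combining the previous two steps gives the convergence of the odd-cycle vectors. Fix $\ell$ and a constant $L$ large enough that, by the tail bounds, with probability $1-o(1)$ every odd cycle of length $\leq 2\ell-1$ of $\Pi_n$ lies in the centred window $[-L,L]$, and simultaneously every cycle of $r\circ\Sigma$ (resp.\ $\rho\circ\Sigma$) of length $\leq 2\ell-1$ lies in $[-L,L]$. On this event the odd-cycle counts of $\Pi_n$ coincide with those of the centred $r_n\circ\Pi'_n$ restricted to $[-L,L]$, which by local convergence agree in distribution in the limit with the corresponding counts of $r\circ\Sigma$ or $\rho\circ\Sigma$ according to the parity of $n$. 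Taking $L\to\infty$ after $n\to\infty$ yields the claimed convergence in distribution of $(C_1,C_3,\dots)$.

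Finally, distinctness of the two limit laws can be seen at the level of the fixed-point marginal. A fixed point of $r\circ\Sigma$ is an $i\in\bZ$ with $\Sigma(i)=-i$, and in particular $i=0$ is a fixed point precisely when $\Sigma(0)=0$, which has strictly positive probability under $\Mallows(\bZ,1/q)$. By contrast, $\rho\circ\Sigma$ has a fixed point at $i$ iff $\Sigma(i)=1-i$, and $0$ is never such an $i$. A short calculation comparing $\bP(C_1(r\circ\Sigma)=0)$ and $\bP(C_1(\rho\circ\Sigma)=0)$ then separates the two distributions for every $q>1$. The main obstacle in all of this is the first step: proving uniform-in-$n$ localization of odd cycles with good enough tails that the couplings in the second and third steps are error-free in the limit; once that is in hand, the local convergence argument is standard.
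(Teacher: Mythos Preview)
This theorem is not proved in the present paper; it is quoted from \cite{MullerVerstraaten} (as the citation in the theorem heading indicates) and invoked as a black box in the proof of Theorem~\ref{thm:logical_limits}\ref{item:logi_non_convergence}. There is therefore no proof here to compare your proposal against.

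As a standalone sketch, your convergence argument is along the right lines and is indeed the natural route: pass to $\Pi'_n=r_n\circ\Pi_n\sim\Mallows(n,1/q)$, use displacement estimates to show that every element of every odd cycle of $\Pi_n$ sits near the centre $(n+1)/2$, re-centre so that $r_n$ becomes $r$ or $\rho$ according to the parity of $n$, and read off the limit from local convergence of the centred permutation to $\Sigma\sim\Mallows(\bZ,1/q)$. Two points need tightening. First, Theorem~\ref{thm:bhat_peled} as quoted only bounds $\bE|\Pi'_n(i)-i|$; for the window--coupling step you actually need the exponential tail $\bP(|\Pi'_n(i)-i|\ge t)\le C(1/q)^{t}$, which is also in \cite{BhatnagarPeled2015} but is not the statement you cited. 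Second, your distinctness argument contains an error: $0$ \emph{can} be a fixed point of $\rho\circ\Sigma$, namely when $\Sigma(0)=1$, so the sentence ``$0$ is never such an $i$'' is false, and the promised ``short calculation'' separating $\bP(C_1(r\circ\Sigma)=0)$ from $\bP(C_1(\rho\circ\Sigma)=0)$ is never supplied. The distinctness is true but needs a genuine argument (for instance a parity consideration on $\sum_{k\text{ odd}}kC_k$, or an honest computation of a marginal); what you have written does not yet establish it.
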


The following result is due to Bhatnagar and Peled.
\begin{thm}[\cite{BhatnagarPeled2015}, Theorem 1.1]\label{thm:bhat_peled}
    For all $0<q<1$, and integer $1\leq i\leq n$, if $\Pi_n\sim \Mallows(n,q)$ then 
    \begin{equation}
        \bE |\Pi_n(i)-i| \leq  \min \left( \frac{2q}{1-q}, n-1 \right).
    \end{equation}
\end{thm}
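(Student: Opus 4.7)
My plan is to decompose $\Pi_n(i)-i$ via the two Lehmer codes of $\Pi_n$ and bound each contribution separately. The bound $n-1$ is immediate since $\Pi_n(i), i \in [n]$, so I will focus on the main inequality $\bE|\Pi_n(i)-i| \leq 2q/(1-q)$.

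For $\pi \in S_n$ and $i \in [n]$, I would define
$$c_i(\pi) := |\{k>i : \pi(k)<\pi(i)\}|, \qquad d_i(\pi) := |\{k<i : \pi(k)>\pi(i)\}|.$$
Counting the values less than $\pi(i)$ according to whether their position is $<i$ or $>i$ gives the identity $\pi(i) = i + c_i(\pi) - d_i(\pi)$, so by the triangle inequality $|\pi(i)-i| \leq c_i(\pi) + d_i(\pi)$. It will therefore suffice to show $\bE[c_i(\Pi_n)], \bE[d_i(\Pi_n)] \leq q/(1-q)$.

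Next, I would invoke the classical fact that $\pi \mapsto (c_1(\pi),\ldots,c_n(\pi))$ is a bijection from $S_n$ onto $\prod_{j=1}^n\{0,\ldots,n-j\}$ with $\inv(\pi) = \sum_j c_j(\pi)$. Combined with \eqref{eq:Znq} (after rewriting $Z_n = \prod_{j=1}^n (1-q^{n-j+1})/(1-q)$), this factorizes the Mallows probability as
$$\bP(\Pi_n = \pi) = \prod_{j=1}^n \frac{q^{c_j(\pi)}(1-q)}{1-q^{n-j+1}},$$
showing that $c_1(\Pi_n),\ldots,c_n(\Pi_n)$ are independent, with $c_j(\Pi_n)$ truncated geometric on $\{0,\ldots,n-j\}$. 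A direct summation then yields the tail bound $\bP(c_i(\Pi_n)\geq k) \leq q^k$ for each $k \geq 1$, and hence $\bE[c_i(\Pi_n)] = \sum_{k\geq 1}\bP(c_i(\Pi_n)\geq k) \leq q/(1-q)$. The same analysis applied to the mirror code $\pi \mapsto (d_1(\pi),\ldots,d_n(\pi))$, a bijection onto $\prod_j\{0,\ldots,j-1\}$ that also sums to $\inv(\pi)$, gives $\bE[d_i(\Pi_n)] \leq q/(1-q)$, and adding the two bounds completes the proof.

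I do not foresee any serious obstacle: the argument boils down to the Lehmer code factorization of \eqref{eq:Mallowsdef} and a single geometric series. The only lossy step is the triangle inequality $|c_i-d_i| \leq c_i + d_i$, which is what produces the factor $2$ in $2q/(1-q)$; sharpening it would require controlling the joint distribution of $c_i$ and $d_i$, which (unlike the vectors $(c_j)_{j=1}^n$ and $(d_j)_{j=1}^n$ taken separately) are not independent.
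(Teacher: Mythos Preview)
The paper does not give its own proof of this statement; it is quoted verbatim from Bhatnagar and Peled~\cite{BhatnagarPeled2015} as a tool. Your argument is correct and self-contained: the identity $\pi(i)-i = c_i(\pi)-d_i(\pi)$ is right, the Lehmer-code factorization of the Mallows measure (which is exactly the content of~\eqref{eq:Znq} and of the truncated-geometric construction in Section~\ref{subsec:constr_trunc}, since $c_j(\Pi_n)=Z_j-1$) makes the $c_j$ independent truncated geometrics, and the tail bound $\bP(c_i\geq k)\leq q^k$ follows at once. The same factorization applies to the $(d_j)$ code, so both expectations are at most $q/(1-q)$. This is essentially the argument Bhatnagar and Peled give, so there is nothing to add.
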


%
%

The following method of sampling a $\Mallows(n,q)$ distributed permutation for $0<q<1$ goes back to the work of 
Mallows~\cite{Mallows}.
Let $Z_1,\dots,Z_n$ be independent with $Z_i \sim \allowbreak {\TGeo(n-i+1,1-q)}$. We now set 
\begin{align}\label{eq:MallowsIter}    
    \Pi_n(1) &:= Z_1,\\    
    \Pi_n(i)&:= \text{the $Z_i$-th smallest number in $[n]\setminus\{ \Pi_n(1),\ldots, \Pi_n(i-1)\}$},\qquad\text{ for $1<i\leq n$}.
\end{align}
\noindent
Then $\Pi_n$ follows a $\Mallows(n,q)$ distribution. If in this construction we replace the $Z_i$ with 
independent uniform random variables on the the set $[n-i+1]$, then the above procedure samples a uniformly distributed permutation from $S_n$. 
%

A natural adaptation of the sampling procedure generates a random bijection $\Pi : \bN \to \bN$, as follows.
We let $Z_1, Z_2, \dots$ be i.i.d.~$\Geo(1-q)$ and set: 
\begin{align}\label{eq:MallowsItereN}
   \Pi(1) &:= Z_1, \\
   \Pi(i) &:= \text{the $Z_i$-th smallest number in $\eN\setminus\{\Pi(1),\dots,\Pi(i-1)\}$},\qquad\text{for }i>1.  
\end{align}
The distribution on permutations of $\eN$ generated by this procedure will denoted by $\Mallows(\eN,q)$.
It was first introduced by Gnedin and Olshanki in \cite{GnedinOlshanski2010}.

To avoid confusing the reader we stress that, while the $\Mallows(\Zed,q)$ distribution that features 
in Theorem~\ref{thm:odd_cycles} above was also introduced by Gnedin and Olshanski, 
it is very much distinct from the $\Mallows(\eN,q)$ distribution.
(In particular, one is a random permutation of $\eN$ and the other a permutation of $\Zed$.) 
At least as far as we are aware, there is no simple iterative procedure for 
generating $\Sigma\sim\Mallows(\Zed,q)$ available in the literature.

\begin{rem}\label{rem:get_any_pi} For any finite sequence $i_1,\dots,i_n$ of distinct positive integers 
we can uniquely associate a 
sequence $z_1,\ldots, z_n$ such that 
if $\Pi \sim \Mallows(\bN, q)$ is constructed from an i.i.d. sequence $Z_1,Z_2,\ldots$ of $\Geo(1-q)$ random variables as 
described, then $\{ \Pi(1)=i_1,\dots,\Pi(n)=i_n\} = \{  Z_1=z_1,\ldots, Z_n = z_n  \}$. 
\end{rem}

The following result was obtained by Basu and Bhatnagar (\cite{basu2016limit}, Lemma W) and 
independently by Crane and DeSalvo (\cite{CraneDesalvo2017}, Lemma W):

\begin{lem}\label{lem:basu_Pin}
    For $q<1$, if $\Pi\sim \Mallows(\bN,q)$ and $\Pi_n \be \rk (\Pi(1),\ldots, \Pi(n))$ then $\Pi_n\isd \allowbreak \Mallows(n,q)$.
\end{lem}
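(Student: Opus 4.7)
The plan is to couple $\Pi \sim \Mallows(\bN, q)$ with a finite Mallows permutation $\Pi^{(N)} \sim \Mallows(N, q)$ for large $N$, built via the iterative constructions of Sections~\ref{subsec:constr_trunc} and~\ref{subsec:constr_geom}, and then invoke Lemma~\ref{lem:prefix_mallows} (with $i=1$, $m=n$) to transfer the distribution from the prefix of $\Pi^{(N)}$ to the prefix of $\Pi$.

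First I would construct $\Pi$ from iid $Z_1, Z_2, \ldots \sim \Geo(1-q)$ as in Section~\ref{subsec:constr_geom}. To build $\Pi^{(N)}$ on the same probability space, for each $i \in [N]$ set $Z_i^{(N)} := Z_i$ whenever $Z_i \leq N-i+1$, and otherwise sample $Z_i^{(N)}$ from an independent $\TGeo(N-i+1, 1-q)$. A short computation using $\bP(Z_i > N-i+1) = q^{N-i+1}$ verifies that the resulting $Z_i^{(N)}$ has the $\TGeo(N-i+1,1-q)$ distribution, so feeding $Z_1^{(N)}, \ldots, Z_N^{(N)}$ into the recipe of Section~\ref{subsec:constr_trunc} does yield $\Pi^{(N)} \sim \Mallows(N,q)$.

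Next, let $E_N$ be the event $\{Z_i \leq N-i+1 \text{ for all } i \leq n\}$. On $E_N$ we have $Z_i^{(N)} = Z_i$ for $i \leq n$, and I would use induction on $i$ to conclude $\Pi(i) = \Pi^{(N)}(i)$ for $i \leq n$: both procedures select the $Z_i$-th smallest element in the complement of the already-used values $\{\Pi(1),\ldots,\Pi(i-1)\}$, the only difference being the ambient universe ($\bN$ versus $[N]$). Since the $k$-th smallest element of $\bN\setminus S$ is always at most $k+|S|$, the bound $Z_i \leq N-i+1$ forces the chosen element to lie in $[N]$, so the two choices coincide. In particular, on $E_N$ we have $\rk(\Pi(1),\dots,\Pi(n)) = \rk(\Pi^{(N)}(1),\dots,\Pi^{(N)}(n))$, and the right-hand side is $\Mallows(n,q)$-distributed by Lemma~\ref{lem:prefix_mallows}.

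Finally, since each $Z_i$ is a.s.\ finite, $\bP(E_N) \to 1$ as $N \to \infty$. Hence for any $\sigma \in S_n$,
\[ \bP(\Pi_n = \sigma) = \lim_{N\to\infty} \bP(\Pi_n = \sigma,\, E_N) = \lim_{N\to\infty} \bP\bigl(\rk(\Pi^{(N)}(1),\dots,\Pi^{(N)}(n)) = \sigma,\, E_N\bigr) = \bP_{\Mallows(n,q)}(\sigma), \]
which gives the claim. The only mildly delicate step is verifying that the hybrid definition of $Z_i^{(N)}$ produces the correct truncated-geometric marginal; the inductive identification $\Pi(i) = \Pi^{(N)}(i)$ on $E_N$ is then bookkeeping, and the limit step is immediate from $\bP(E_N)\to 1$.
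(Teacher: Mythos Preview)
The paper does not actually prove this lemma; it is quoted from Basu--Bhatnagar and Crane--DeSalvo without argument. Your coupling proof is correct: the hybrid $Z_i^{(N)}$ really does have the $\TGeo(N-i+1,1-q)$ law (your computation is the standard ``accept if $\leq m$, else resample'' argument), and since each $Z_i^{(N)}$ is a function of $(Z_i,W_i)$ with the pairs independent across $i$, the $Z_i^{(N)}$ are also mutually independent, so $\Pi^{(N)}\sim\Mallows(N,q)$ as claimed. The inductive identification $\Pi(i)=\Pi^{(N)}(i)$ on $E_N$ is fine because, by induction, the already-used sets coincide and the bound $Z_i\le N-i+1$ guarantees that the $Z_i$-th remaining element of $\bN$ lies in $[N]$. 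The limit step is immediate since $\bP(E_N^c)\le\sum_{i=1}^n q^{N-i+1}\to 0$.

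One remark: the detour through Lemma~\ref{lem:prefix_mallows} is not really necessary. A slightly shorter route is to observe that, on $E_N$, the relative order of $\Pi(1),\dots,\Pi(n)$ is determined by $Z_1,\dots,Z_n$ alone (the values $Z_{n+1},\dots$ are irrelevant, and on $E_N$ the truncation is inactive), so one may compute $\bP(\Pi_n=\pi)$ directly in terms of the geometric weights and recognise the Mallows density via~\eqref{eq:Znq}. But your argument is perfectly valid and arguably more transparent.
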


This gives us a natural coupling of $\Pi \sim \Mallows(\eN,q)$ and a
sequence $\Pi_1,\Pi_2,\dots$ with $\Pi_n \sim \Mallows(n,q)$. 

Given $\Pi \sim \Mallows(\eN,q)$ we can define a sequence of {\em regeneration times }$T_0 <T_1 < T_2<\ldots$ as follows:

\begin{align}
	T_0 &:=0,\\
	T_i &:=  \inf\{ j > T_{i-1} \text{ s.t. }  \Pi( [j] )=[j] \} \qquad (i=1,2,\dots),.
\end{align}

We have 

\begin{lem}\label{lem:Rplus}[\cite{MullerVerstraaten}] $\Ee T_1^k < \infty$ for every $k \in \eN$.
\end{lem}

\noindent
Although this last result is not spelled out explicitly in~\cite{MullerVerstraaten} -- or anywhere else in the literature as far 
as we are aware -- it readily follows by applying Lemma 2.11 in~\cite{MullerVerstraaten} to the $(\infty,q)$-arc chain
as defined there. See also the proof of Lemma 4.3 in~\cite{MullerVerstraaten}. 
Basu and Bhatnagar~\cite{basu2016limit} had previously shown that $T_1$ has finite second moment. 

We also define the {\em interarrival times}

$$ X_i := T_i - T_{i-1}, $$ 

\noindent 
Looking at the sampling procedure generating $\Pi$, it is not difficult to see that 
conditional on the event $T_1 = t$, the bijection $i \mapsto \Pi(i+t)-t$ is distributed like $\Pi$.
It follows that the interarrival times $X_1, X_2, \dots$ are i.i.d.
Moreover, writing $\Xcal_i := \{T_{i-1}+1,\dots,T_i\}$ we see that 
$\Pi$ maps $\Xcal_i$ bijectively onto $\Xcal_i$, and in fact the permutations $\tilde{\Pi}_1 : [X_1]\to[X_1],
\tilde{\Pi}_2: [X_2]\to[X_2], \dots$ 
given by 

$$ \tilde{\Pi}_i(j) := \Pi(T_{i-1}+j) - T_{i-1} \quad \text{ for $j=1,\dots,X_i$, } $$

\noindent
are i.i.d.~as well. 

For permutations $\pi \in S_n$ and $\sigma\in S_m$ we define $\pi \oplus \sigma$ to be the 
permutation $\tau \in S_{n+m}$ satisfying

\begin{align}\label{eq:def_oplus}
    \tau(i) = \begin{cases}    \pi(i) &\text{ if }i\leq n,\\ \sigma(i) + n&\text{ if }i>n.  \end{cases}
\end{align}

With this definition, we can write 

$$ \Pi = \tilde{\Pi}_1 \oplus \tilde{\Pi}_2 \oplus \dots. $$

\noindent
For $n \in \eN$ let us write:

$$ N(n) := \max\{ i : T_i \leq n \}. $$ 

\noindent 
We have the identity

\begin{equation}\label{eq:Pinoplus} 
    \Pi_n = \begin{cases}
            \tilde{\Pi}_1 \oplus \dots \oplus \tilde{\Pi}_{N(n)} & \text{ if $T_{N(n)} = n$, } \\
            \tilde{\Pi}_1 \oplus \dots \oplus \tilde{\Pi}_{N(n)} \oplus 
            \rk( \tilde{\Pi}_{N(n)+1}(1),\dots,\tilde{\Pi}_{N(n)+1}(n-T_{N(n)}) ) & \text{ otherwise. }
           \end{cases}
\end{equation}

\subsection{First--order logic\label{sec:logic}}

Here we briefly cover some of the concepts from logic and model theory that we will need in our proofs. 
For a more complete descriptions, see for instance Chapter 1 of \cite{Immerman}.

Given a set of relation symbols $R_1,\ldots, R_k$ with associated arities $a_1,\ldots, a_k$, we define $\cL_{R_1,\ldots, R_k}$ as the first--order language consisting of all 
first--order formulas built from the usual Boolean connectives together 
with $R_1,\ldots, R_k$. As an example, for a single binary relation $R$, we may consider the formula $\phi_0(x)\in \cL_{R}$ defined as
\begin{equation}\label{eq:skhrbejrhg}
    \phi_0(x) \be R(x,x)\wedge \neg \exists y : (\neg(x=y) \wedge R(y,y)).
\end{equation}
A \emph{free variable} of a formula $\phi$ is a variable that does not have a quantifier. 
A formula with no free variables is called a {\em sentence}. 
Given $R_1,\ldots, R_k$ with arities $a_1,\ldots, a_k$, a $(R_1,\ldots, R_k)$--structure $\fA$ is a 
tuple $(A,R_1^\fA,\ldots, R_k^\fA)$ where $A$ is a set, called the 
domain of $\fA$, and each $R_j^{\fA}$ is a relation of arity $a_j$ over $A$. 
We will routinely write simply $R_j$ for $R_j^\fA$ and talk about structures instead of $(R_1,\ldots, R_k)$--structures. 
If a structure $\fA$ satisfies a sentence $\phi$ then we write $\fA \models \phi$. Similarly, if $i_1,\ldots,i_k $ 
are elements in the domain of $\fA$ and $\phi(x_1,\ldots, x_k)$ is a formula 
whose free variables are $x_1,\ldots, x_k$ then we write $(\fA, i_1,\ldots, i_k) \models \phi$ if $\phi$ is 
satisfied by $\fA$ under the assignment $x_j \mapsto i_j$. 
We have for instance that $(\fA, i)\models \phi_0$ precisely when the element $x$ is the unique 
element satisfying $R(x,x)$ in $\fA$. 
An {\em atomic formula} is a formula without quantifiers and Boolean connectives.

Two $(R_1,\ldots, R_k)$--structures $\fA, \fB$ \TV{with domain $A$ and $B$ respectively} are called \emph{isomorphic} if there is a bijection $f: A\to B$ such that
\begin{equation}
    (i_1,\ldots, i_k) \in R_j^{\fA} \iff  (f(i_1),\ldots, f(i_k)) \in R_j^{\fB}\qquad\text{ for all $i_1,\ldots, i_k\in A$ and all $R_j$.}
\end{equation}
The following is a well--known result on isomorphic structures, see e.g. Theorem~W of~\cite{Logic} for a proof.

\begin{lem}
    If $\fA$ and $\fB$ are isomorphic, then for all first--order sentences $\phi$ we have $\fA \models \phi$ if and only if $\fB \models \phi$. 
\end{lem}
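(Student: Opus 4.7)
The plan is to prove the stronger statement that for every formula $\phi(x_1,\ldots,x_n)$ (not just sentences), any isomorphism $f:A\to B$ between structures $\fA$ and $\fB$, and any elements $a_1,\ldots,a_n\in A$, we have
$$(\fA, a_1,\ldots,a_n) \models \phi \iff (\fB, f(a_1),\ldots,f(a_n)) \models \phi.$$
Taking $\phi$ to have no free variables immediately yields the lemma. The natural approach is structural induction on the complexity of $\phi$, following the recursive definition of formulas alluded to in the reference to Chapter~1 of \cite{Immerman}.

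For the base case, consider atomic formulas. A formula of the form $R_j(x_{i_1},\ldots,x_{i_{a_j}})$ is satisfied by $(a_{i_1},\ldots,a_{i_{a_j}})$ in $\fA$ iff this tuple lies in $R_j^\fA$, and by the very definition of an isomorphism this holds iff $(f(a_{i_1}),\ldots,f(a_{i_{a_j}}))\in R_j^\fB$. For the equality predicate $x_i = x_j$, we use that $f$ is a bijection, hence injective: $a_i = a_j$ iff $f(a_i) = f(a_j)$.

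For the inductive step, the Boolean connectives pose no difficulty. If $\phi = \neg \psi$, $\phi = \psi_1 \wedge \psi_2$, $\phi = \psi_1 \vee \psi_2$, and so on, then the truth value of $\phi$ in each structure is a fixed Boolean function of the truth values of its subformulas, which agree on the two sides by the inductive hypothesis. The only substantive case is the quantifier step. Suppose $\phi(x_1,\ldots,x_n) = \exists y\,\psi(x_1,\ldots,x_n,y)$. Then $(\fA,a_1,\ldots,a_n)\models \phi$ iff there exists some $a \in A$ with $(\fA,a_1,\ldots,a_n,a) \models \psi$. By the inductive hypothesis applied to $\psi$, this is equivalent to the existence of $a\in A$ with $(\fB,f(a_1),\ldots,f(a_n),f(a))\models \psi$. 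Since $f$ is a bijection, $\{f(a):a\in A\} = B$, and so this in turn is equivalent to the existence of some $b\in B$ with $(\fB,f(a_1),\ldots,f(a_n),b)\models \psi$, i.e. $(\fB,f(a_1),\ldots,f(a_n))\models \phi$. The universal quantifier is handled symmetrically (or by rewriting $\forall y\,\psi$ as $\neg \exists y\,\neg\psi$).

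The only place bijectivity of $f$ is used in an essential way is in the equality and quantifier cases; preservation of the relation symbols only requires $f$ to be a homomorphism in both directions, which is precisely what the isomorphism condition supplies. I do not anticipate any real obstacle here—the result is routine once the formal recursive definition of satisfaction is in place—so the ``main step'' is really just being careful to cover all formula-building operations admitted by the definition of $\cL_{R_1,\ldots,R_k}$.
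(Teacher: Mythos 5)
Your proof is correct and is the standard structural-induction argument; the paper does not prove this lemma itself but simply cites Theorem 3.4 of a logic textbook, and the argument you give is exactly the canonical one that appears there.
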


We recursively define the quantifier depth of a first--order sentence $\phi$, denoted $D(\phi)$, as follows: 
\begin{itemize}
    \item For any atomic formula $\phi$ we set $D(\phi)=0$;
    \item For any formula $\phi$ we set $D(\neg \phi) = D(\phi)$;
    \item For any two formulas $\phi, \psi$ we set $D(\phi \wedge \psi)=D(\phi \vee \psi) = \max\{ D(\phi), D(\psi) \}$;
    \item For a formula $\psi(x)$ with a free variable $x$ we set $D(\exists x :\psi(x) ) = D(\forall x :\psi(x) ) = 
    D(\psi(x)) + 1$.
\end{itemize}

We say that two structures $\fA$ and $\fB$ are \emph{$d$--equivalent} (notation : $\fA \equiv_d \fB$) 
if $\fA$ and $\fB$ agree on all sentences $\phi$ with $D(\phi)\leq d$. 
The relation $\equiv_d$ is an equivalence relation. 
The following lemma will be crucial in the proof of Theorem~\ref{thm:toto}~Part~\ref{part:conv_toto}. 
A proof can be found for instance following Corollary W in \cite{Libkin}.

\begin{lem}\label{lem:finite_classes}
    The equivalence relation $\equiv_d$ has only finitely many equivalence classes.
\end{lem}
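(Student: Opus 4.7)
The plan is to prove, by induction on $d$, a slightly stronger statement: for every $n \geq 0$, there are only finitely many formulas of quantifier depth at most $d$ with free variables among $x_1,\ldots,x_n$, up to logical equivalence. Setting $n=0$ will then yield finitely many logically inequivalent sentences of depth at most $d$, and Lemma~\ref{lem:finite_classes} follows immediately: each $\equiv_d$-equivalence class is determined by the set of those (finitely many) sentences that its members satisfy, so there are at most $2^N$ classes, where $N$ is the number of inequivalent sentences of depth $\leq d$.

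For the base case $d=0$, formulas with free variables among $x_1,\ldots, x_n$ are Boolean combinations of atomic formulas built from the fixed finite signature $R_1,\ldots, R_k$, equality, and the $n$ free variables. Because the signature has finitely many relation symbols of bounded arity, only finitely many such atomic formulas exist, and therefore only finitely many inequivalent Boolean combinations. For the inductive step, every formula of depth $d+1$ is, up to equivalence, a Boolean combination of formulas of the form $\exists y\,\psi$ (and their negations) where $\psi$ has depth $\leq d$ and free variables among $x_1,\ldots,x_n, y$. The inductive hypothesis applied with $n+1$ free variables gives finitely many inequivalent $\psi$'s, hence finitely many inequivalent $\exists y\,\psi$'s, and so finitely many inequivalent Boolean combinations with free variables among $x_1,\ldots,x_n$.

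The argument is a routine structural induction, and the main thing to be careful about is the bookkeeping: one must track the dependence on both $d$ and the number of free variables $n$ through the induction, since quantifying binds a variable and shifts the count from $n+1$ back to $n$. No deep obstacle arises here, since this is a well-known result in finite model theory. An equivalent route would be via Ehrenfeucht--Fra\"iss\'e games, where Duplicator's winning strategies in the $d$-round game on pairs of structures can be classified into finitely many game types, directly producing the bound on the number of $\equiv_d$-classes; but the formula-counting induction above is the most direct approach and is in any case the one implicit in the reference to Corollary~3.16 of~\cite{Libkin}.
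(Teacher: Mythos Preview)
Your proof is correct and is precisely the standard formula-counting induction that the paper defers to by citing Corollary~3.16 of~\cite{Libkin}; the paper does not supply its own argument for this lemma. Your bookkeeping on the free-variable count through the quantifier step is the right point of care, and the argument goes through as written.
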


For a structure $\fA$ over domain $A$ we define the \textit{Gaifman Graph of $\fA$}, denoted $G_\fA$, as the 
graph $(A, E^\fA)$ where $E^\fA$ is the collection of all pairs $\{a,b\} \subseteq A$ such that 
$a$ and $b$ occur together in an element of at least one relation $R^\fA$. 
For an element $x\in A$ we define its $r$--neighborhood $N(x,r)$ as
\begin{equation}
    N(x,r) = \{  y  \in A \mid d_{G_{\fA}}(x,y) \leq r \}.
\end{equation}
We say that a structure $\fA$ has \emph{degree $d$} if the maximum degree of $G_{\fA}$ is $d$. 

For a structure $\fA$ and $X\subseteq A$ we define $\fA \upharpoonright X$ to be the restriction of $\fA$ to $X$.
That is $\fA \upharpoonright X$ has domain $X$ and for 
every relation symbol $R_i$, $R_i^{ \fA \upharpoonright X } = R_i^\fA \cap X^{a_i}$, where $a_i$ is the 
arity of $R_i$.

For $r\geq 0$ and a structure $\fA$, the \emph{$r$--type} of an element $x\in A$ is the isomorphism class of $\fA \upharpoonright N(x,r)$. Two structures $\fA$ and $\fB$ are said to be \emph{$(r,s)$--equivalent} if for every $r$--type, $\fA$ and $\fB$ either have the same number of elements of this type, or they both have more than $s$ elements of this type. 
We will use the following powerful result in the proof of Theorem~\ref{thm:logical_limits}. 
A proof can for instance be found in \cite{Immerman} (Theorem W).

\begin{thm}[Bounded--Degree Hanf Theorem]\label{thm:hanf_general}
    Let $k$ and $d$ be fixed. Then there is an integer $s$ such that for all structures $\fA$ and $\fB$ of degree at most 
    $k$, if $\fA$ and $\fB$ are $(2^d, s)$ equivalent, then $\fA \equiv_d \fB$. 
\end{thm}

We also briefly mention \emph{second--order logic}. In second--order logic, formulas can now also take arguments that are relations, and we can quantify over relations. An example of such a formula is 
\begin{equation}
    \xi(R) \be \forall x \,(  \neg R(x,x) \wedge (\exists! y : R(x,y) \wedge R(y,x) )  ),
\end{equation}
where we use the shorthand $\exists ! x $ to mean that there exists a unique such $x$, i.e., $ {\exists! x :\psi(x)} \be \exists x : (\psi(x)\wedge \forall y \ (\psi(y) \rightarrow y=x))$. The formula $\xi(R)$ is satisfied by a binary relation $R$ if and only if $R$ encodes a perfect matching. Moreover, in second--order logic we can quantify over relations. So we may write the sentence $\phi \be \exists R : \xi(R)$. A \TV{finite} structure satisfies $\phi$ if and only if its domain has even cardinality.

\subsubsection{$\an{TOOB}$ and $\an{TOTO}$\label{subsec:TOOB_TOTO}}


Given a single binary relation $R$, we define $\TOOB \be \cL_{R}$, the first--order language obtained from the single binary relation $R$. A permutation $\pi$ can be encoded as a structure $([n], R^\pi)$ by stipulating that $R^\pi(i,j)$ precisely when $\pi(i)=j$.

We define $\TOTO = \cL_{<_1,<_2}$ where $<_1$ and $<_2$ are two binary relations. A permutation $\pi$ is then encoded as $([n], <_1^\pi, <_2^\pi)$ where $i <_1^\pi j$ if and only $i<j$ in the usual ordering of $[n]$, and $i <_2^\pi j$ if and only if $\pi(i) < \pi(j)$. So $<_1$ and $<_2$ (recall that we often leave out the superscripts) are both total orders on the domain $[n]$.

We remark that an $R$--structure or a $(<_1,<_2)$--structure does not necessarily correspond to a permutation. 
In an $R$-structure $\fA$ the relation $R^\fA$ might not correspond to a bijection and similarly 
in a $(<_1,<_2)$-structure one or both of 
the relations $<_1^\fA, <_2^\fA$ might not be a total order on the domain. 
One way to fix this is to identify sets of axioms that structures in $\TOOB$, respectively $\TOTO$, should satisfy.
In our setting we will be sampling from a distribution over the set of permutations and regarding them as 
either $R$--structures or $(<_1,<_2)$--structures, so there is no need for such axioms. 

We define the $\an{TOTO}$ formulas
\begin{align}
    \succ_1(x,y) \be (x<_1 y) \wedge \neg \exists w ( (x<_1 w) \wedge (w <_1 y)),\\
    \succ_2(x,y) \be (x<_2 y) \wedge \neg \exists w ( (x<_2 w) \wedge (w <_2 y)).
\end{align}
These formulas are such that $(\pi,i,j)\models \succ_1(x,y)$ if and only if $j$ is the successor of $i$ under $<_1$, and $(\pi,i,j)\models \succ_2(x,y)$ if and only if $j$ is the successor of $i$ under $<_2$. We also recursively define $\succ_1^{(1)} \be \succ_1$ and for $k\geq 1$
\begin{align}
    \succ_1^{(k+1)}(x,y) \be \exists w (   \succ_1(w, y) \wedge \succ_1^{(k)}(x,w)  ),
\end{align}
and similarly $\succ_2^{(k)}$. These functions are such that for $k\geq 1$
\begin{align}
    (\pi,i,j)\models \succ_1^{(k)} (x,y)\qquad& \iff \qquad i +k = j,\\
    (\pi,i,j)\models \succ_2^{(k)}(x,y) \qquad& \iff \qquad \pi(i) +k = \pi(j).
\end{align}
%
%
%

Recall the definition of $ \rk(\vec{x})$ for a sequence $\vec{x}=(x_1,\ldots, x_n)$ of distinct numbers as 
given in \eqref{eq:rank_definition}.
The following lemma is essentially a special case of Theorem 4.2.1 in \cite{ShorterModel}.
For completeness we provide a short proof.

\begin{lem}\label{lem:relativization}
    Let $\phi(\vec{x}) \in \TOTO$ be a formula with $k\geq 0$ free variables, and 
    let $\psi(y) \in \TOTO$ be a formula with one free variable. There exists a formula $\varphi^\psi(\vec{x}) \in \TOTO$ such 
    that for all $n$, all $\pi \in S_n$ and all 
    $\vec{i} = (i_1,\dots,i_k) in [n]^k$we have 
    \begin{equation}
        (\pi,\vec{i}) \models \phi^{\psi} (\vec{x}) \quad\text{ if and only if }\quad \pi \models 
        \psi(i_1)\wedge\dots\wedge\psi(i_k) \text{ and } (\pi^*,\vec{i}) \models \phi(\vec{x}),
    \end{equation}
    where $\pi^* = \rk(\pi(a_1),\ldots, \pi(a_m))$ with $a_1<\dots<a_m$ and 
    $\{a_1,\dots,a_m\} = \{ a \in [n] : 
    (\pi,a) \models \psi(y) \}$. 
\end{lem}

\begin{proof}
    The proof is by induction on the structure of the formula $\varphi$, starting with the case when 
    it is an atomic formula. 
    Every atomic formula is of the form $R(x_1, x_2)$, for $R \in \{=,<_1,<_2\}$. 
    In this case we define $\phi^{\psi}(\vec{x})$ simply as $\psi(x_1)\wedge\psi(x_2)\wedge R(x_1,x_2)$.
    This clearly does the trick for atomic formulas.
    
    For formulas constructed from smaller formulas using one of the logical connectives, we 
    define $(\neg \phi)^{\psi}$ as $\neg (\phi^{\psi})$ and 
    $(\phi \vee \psi)^{\psi}$ as $\phi^{\psi}\vee \psi^{\psi}$ and similarly for $\wedge,\rightarrow,\leftrightarrow$. 
    And, if $\varphi$ starts with a quantifier then we set 
    
    $$ \left( \exists x : \varphi(x) \right)^{\psi} := \exists x : \psi(x) \wedge \varphi^\psi(x), 
    \quad 
    \left( \forall x : \varphi(x) \right)^{\psi} := \forall x : \psi(x) \rightarrow \varphi^\psi(x). $$
   
   As the reader can readily check the obtained formula has the behaviour described by the lemma statement.
\end{proof}

The formula $\phi^{\psi}$ provided by Lemma~\ref{lem:relativization}
is called the \emph{relativization of $\phi$ to $\psi$}.

\begin{lem}\label{lem:reverse_formula}
    Let $\phi$ be a $\TOTO$ formula with $k\geq 0$ free variables. There exists a $\TOTO$ formula $\phi^{\on{reverse}}$ such that for all $n\geq 1$, all $\pi \in S_n$ and all $\vec{i}\in [n]^k$ we have
    \begin{equation}
        (\pi,\vec{i}) \models \phi \quad \text{ if and only if }\quad (r_n \circ \pi,\vec{i}) \models \phi^{\on{reverse}}.
    \end{equation}
\end{lem}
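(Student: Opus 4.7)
The plan is a straightforward structural induction on the formula $\phi$. The key observation is the following: for any $\pi \in S_n$ and $i,j \in [n]$, we have $i <_1 j$ in $r_n \circ \pi$ if and only if $i <_1 j$ in $\pi$ (since the domain order is unaffected), while $i <_2 j$ in $r_n \circ \pi$ means $(r_n\circ\pi)(i) < (r_n\circ\pi)(j)$, i.e., $n-\pi(i)+1 < n-\pi(j)+1$, i.e., $\pi(j)<\pi(i)$, which is precisely $j <_2 i$ in $\pi$. So passing from $\pi$ to $r_n\circ\pi$ amounts to swapping the two arguments of every $<_2$ predicate.

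Accordingly, I would define $\phi^{\on{reverse}}$ recursively as follows. For the atomic formulas, set $(x=y)^{\on{reverse}} \be (x=y)$, $(x <_1 y)^{\on{reverse}} \be (x <_1 y)$, and $(x <_2 y)^{\on{reverse}} \be (y <_2 x)$. For the connectives and quantifiers, set $(\neg \phi)^{\on{reverse}} \be \neg (\phi^{\on{reverse}})$, $(\phi\wedge \psi)^{\on{reverse}} \be \phi^{\on{reverse}}\wedge \psi^{\on{reverse}}$, $(\phi\vee \psi)^{\on{reverse}} \be \phi^{\on{reverse}}\vee \psi^{\on{reverse}}$, $(\exists x \, \phi)^{\on{reverse}} \be \exists x \, \phi^{\on{reverse}}$, and $(\forall x \, \phi)^{\on{reverse}} \be \forall x \, \phi^{\on{reverse}}$ (and similarly for $\rightarrow, \leftrightarrow$ if they are taken as primitive).

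The induction proceeds by proving the equivalence $(\pi,\vec{i}) \models \phi \iff (r_n\circ\pi,\vec{i}) \models \phi^{\on{reverse}}$ for all $n$, $\pi\in S_n$ and tuples $\vec{i}$ of the appropriate length. The base case is exactly the observation above. The Boolean connectives are immediate from the induction hypothesis. For the quantifier step, since the domain of $r_n\circ\pi$ is the same set $[n]$ as that of $\pi$, a witness $i \in [n]$ for $\exists x \, \phi$ in $\pi$ is the same witness for $\exists x \, \phi^{\on{reverse}}$ in $r_n\circ\pi$, and symmetrically for $\forall$.

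There is no real obstacle here; the lemma is essentially a bookkeeping statement. The only thing to be careful about is that $r_n$ depends on $n$, so one must check that the translation $\phi \mapsto \phi^{\on{reverse}}$ is itself independent of $n$, which it is since it only manipulates the syntactic form of the formula. The resulting $\phi^{\on{reverse}}$ clearly has the same set of free variables as $\phi$, as required.
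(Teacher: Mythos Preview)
Your proposal is correct and follows exactly the same approach as the paper: structural induction on $\phi$, with the same atomic translations $(x=y)^{\on{reverse}}=(x=y)$, $(x<_1 y)^{\on{reverse}}=(x<_1 y)$, $(x<_2 y)^{\on{reverse}}=(y<_2 x)$, and the observation that $r_n$ reverses the $<_2$ order while leaving $<_1$ unchanged. If anything, you spell out the inductive steps for connectives and quantifiers more explicitly than the paper does.
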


\begin{proof}
    As in the proof Lemma~\ref{lem:relativization}, we use induction on the structure of formulas. 
    For atomic formulas we set    
    
    $$
        (x = y)^{\on{reverse}} := (x = y),\quad\quad
        (x <_1 y)^{\on{reverse}} := (x <_1 y),\quad\quad
        (x <_2 y)^{\on{reverse}} := (y <_2 x).
    $$
    
    Now, $\pi(i) < \pi(j)$ if and only if $r_n \circ \pi (i) > r_n\circ \pi(j)$, so that
    \begin{equation}
        (\pi ,i,j) \models x <_2 y \quad\iff\quad (r_n \circ \pi,i,j) \models y <_2 x.
    \end{equation}
    The other two relations are straightforward to check and the result now easily follows by induction 
    on the structure of the formula. 
\end{proof}

\subsection{Two rapidly growing functions\label{subsec:definitions}}

The \emph{tower function} is  defined as 

\begin{equation}
    T(n) = 2^{2^{\cdot ^{\cdot ^{2}}}}    
\end{equation}

\noindent
where the tower of $2$'s has height $n$. 
It may also be defined recursively by $T(0)=1$ and $T(i) = 2^{T(i-1)}$ for $i\geq 1$. 


Similarly, we define the \emph{wowzer function} $W(n)$ by
\begin{equation}
    W(0) = 1,\qquad\text{ and }\qquad W(n) = T(W(n-1))\quad\text{ for $n\geq 1$}.
\end{equation}
The function $W(\cdot)$ escalates rapidly. 
We have $W(1) = T(1) = 2$, $W(2) = T(2)=4$, $W(3) = T(4) = 65536$, and $W(4) = T(65536)$ is a tower of $2$'s of height $65536$. 

The previous two functions are part of a larger sequence of functions called the hyperoperation 
sequence, see e.g.~\cite{Goodstein}. The tower function is sometimes known under the name tetration and the wowzer function under the name pentation. Knuth also describes these functions in \cite{Knuth} as part of a larger hierarchy in terms of arrow notation.

The \emph{log--star} function $\log^* n$ is the ``discrete inverse'' of the Tower function. 
It can be defined by 
\begin{equation}
     \log^* n := \min\{ k \in \bZ_{\geq 0} : T(k) \geq n \}. 
\end{equation}
We have $\log^*(T(n)) = n$ for all $n\geq 0$. 
Also note that, phrased differently, $\log^* n$ is the number of times we need to iterate the base two logarithm, starting from
$n$ to reach a number less than $1$.

We define the $\log^{**}$ function as 
\begin{equation}\label{eq:log_star_star_def}
     \log^{**} n := \min\{ k \in \bZ_{\geq 0} : W(k) \geq n \}.
\end{equation}

The $\log^{**}n$ function grows incredibly slowly. 
Although the wowzer function is part of a larger hierarchy of functions as 
mentioned, we have not found the function $\log^{**}$ used anywhere in the literature. 
We emphasize that the notation $\log^{**}n$ is our own and may not be standard.

Although the $\log^{**}$ function is not strictly increasing, we do have the following simple monotonicity principle. 

\noindent
(Here and in the rest of the paper, for $f$ a function and $i$ an integer, 

$$f^{(i)} = \underbrace{f \circ \dots \circ f}_{i\times}, $$

\noindent
denotes the $i$-fold iteration of $f$.)

\begin{lem}\label{lem:ineq_w_log_star}
    For all $i\in \bZ_{\geq 0}$ and integers $x,y$, if $W^{(i)} (x) < y$ then $x < (\log^{**})^{(i)} (y)$.
\end{lem}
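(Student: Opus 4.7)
The plan is to prove the lemma by induction on $i$. The base case $i=0$ is trivial since $W^{(0)}$ and $(\log^{**})^{(0)}$ are both the identity map, so the hypothesis and conclusion coincide.

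The engine of the induction is the single-step version: if $W(x) < y$ then $x < \log^{**}(y)$. I would derive this directly from the definition in \eqref{eq:log_star_star_def}: by definition $W(\log^{**}(y)) \geq y$, so $W(\log^{**}(y)) \geq y > W(x)$, and since $W$ is strictly increasing on $\bZ_{\geq 0}$, this forces $\log^{**}(y) > x$. The strict monotonicity of $W$ is immediate from the recursion $W(n) = T(W(n-1))$ together with the strict monotonicity of the tower function $T$ (which itself is immediate from $T(i) = 2^{T(i-1)}$).

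For the inductive step, assume the statement holds for some $i \geq 0$ and suppose $W^{(i+1)}(x) < y$. Rewriting the left-hand side as $W^{(i)}(W(x))$, the inductive hypothesis applied with $W(x)$ in the role of $x$ yields $W(x) < (\log^{**})^{(i)}(y)$. Applying the single-step version once more (with $z = (\log^{**})^{(i)}(y)$ playing the role of $y$) then gives $x < \log^{**}\bigl((\log^{**})^{(i)}(y)\bigr) = (\log^{**})^{(i+1)}(y)$, which completes the induction.

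I do not anticipate any real obstacles; the only subtlety is confirming that $W$ is strictly increasing on $\bZ_{\geq 0}$ so that we may invert the inequality $W(\log^{**}(y)) > W(x)$, but this follows at once from the recursive definitions.
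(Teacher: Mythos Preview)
Your proof is correct and follows essentially the same approach as the paper: both argue by induction on $i$, with the single-step implication $W(a) < y \Rightarrow a < \log^{**}(y)$ coming directly from the definition~\eqref{eq:log_star_star_def}. The only cosmetic difference is that you decompose $W^{(i+1)} = W^{(i)}\circ W$ and apply the induction hypothesis first, whereas the paper writes $W^{(i+1)} = W\circ W^{(i)}$ and uses the single-step first; your explicit appeal to the strict monotonicity of $W$ is a detail the paper leaves implicit.
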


\begin{proof}
    We use induction. The base case $i=0$ holds trivially. So suppose that $W^{(i+1)}(x) = W(W^{(i)}(x)) < y$. From \eqref{eq:log_star_star_def} we immediately obtain that $ W^{(i)}(x) < \log^{**}y $, and the result follows by induction. 
\end{proof}


\begin{lem}\label{lem:composed_large_functions}
    For all $m > 2$ we have
    \begin{equation}
        T^{(3)}\circ W^{(2)}(m)  < W^{(2)}(m+ 1).
    \end{equation}
\end{lem}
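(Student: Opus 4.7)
The plan is to unfold both sides using the defining recursion $W(n) = T(W(n-1))$ and reduce the claim to an easy monotonicity statement about $T$.

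First I would set $k \be W(m)$, so that $W^{(2)}(m) = W(k)$. The right-hand side then becomes
\begin{equation}
W^{(2)}(m+1) \;=\; W(W(m+1)) \;=\; W(T(k)),
\end{equation}
using $W(m+1) = T(W(m)) = T(k)$. For the left-hand side I use the recursion for $W$ repeatedly:
\begin{equation}
W(k+1) = T(W(k)),\quad W(k+2) = T^{(2)}(W(k)),\quad W(k+3) = T^{(3)}(W(k)),
\end{equation}
so that $T^{(3)}(W^{(2)}(m)) = T^{(3)}(W(k)) = W(k+3)$. The desired inequality therefore reduces to $W(T(k)) > W(k+3)$.

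Since $W$ is strictly increasing on $\bZ_{\geq 0}$ (which is immediate from $T(x) > x$ for $x \geq 1$), this in turn reduces to the purely arithmetic inequality $T(k) > k+3$. For $k \geq 3$ one checks $T(3) = 16 > 6$, and then $T$ grows so fast that $T(k) > k+3$ trivially continues to hold. Finally $k = W(m) \geq W(3) = 65536$ whenever $m > 2$, so the required $k \geq 3$ is satisfied with enormous room to spare, completing the proof.

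No step is really an obstacle; the only mild care needed is to make sure the off-by-ones match, i.e.\ that exactly three applications of $T$ on the left correspond to incrementing the argument of $W$ by $3$ on the right, and that the threshold $m > 2$ is enough to guarantee $T(k) > k + 3$.
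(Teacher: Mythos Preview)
Your proof is correct and follows essentially the same route as the paper: both unfold $T^{(3)}(W(W(m))) = W(W(m)+3)$ and $W^{(2)}(m+1) = W(T(W(m)))$, then use monotonicity of $W$ to reduce to $T(k) > k+3$ for $k = W(m)$, which holds since $W(m) > 2$ when $m > 2$. The only cosmetic difference is that you introduce the abbreviation $k = W(m)$ and spell out the monotonicity of $W$, whereas the paper leaves those implicit.
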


\begin{proof}
    By the definition of $W$ we have $T^{(3)}(W(W(m))) = W(W(m)+3)$, so it is enough to check that $W(W(m)+3) < W(W(m+1))$. This reduces to checking that 
    \begin{equation}\label{eq:akjhekfhwb}
        W(m) + 3 < W(m+1) = T(W(m)).
    \end{equation}
    But $T(x) > x+3$ for all $x>2$. As $W(m) > 2$ for $m>2$, \eqref{eq:akjhekfhwb} holds.
\end{proof}

\section{Proof of Theorem~\ref{thm:logical_limits}\label{sec:toob}}

Here prove Theorem~\ref{thm:logical_limits}. 
We are in particular considering $\TOOB$ and all mention of 
$(r,k)$-equivalence classes and all uses of $\equiv_d$ in this section 
are thus wrt.~$\TOOB$.
%
The bounded--degree Hanf theorem (Theorem~\ref{thm:hanf_general}) gives the following basic corollary.
For completeness we spell out a proof.

\begin{cor}\label{cor:nietteun}
 For every $d\geq 1$ there is a $h=h(d)$ such that the following holds.
 If $\pi,\sigma$ are two permutations such that 
 $C_i(\pi), C_i(\sigma) \geq h$ for $1\leq i \leq h$ then 
 $\pi\equiv_d\sigma$.
\end{cor}

\begin{proof}
The Gaifman graph (wrt.~$\TOOB$) of a permutation has maximum degree \TV{at most} $2$.
Let $s$ be as provided by the bounded--degree Hanf theorem (with $k=2$), and set 
$h := 1000\cdot\max(2^d,s)$.
We point out that all points on a cycle of length $\geq h$ in any permutation have the same $2^d$--type. 
Both $\pi$ and $\sigma$ have at least $s$ such points.
For each $1\leq i<h$ both $\pi$ and $\sigma$ have at least $s$ points on $i$-cycles.
(And for each $i$ all points on $i$-cycles have the same $2^d$--type.)
 
The result thus follows from the bounded--degree Hanf theorem.
\end{proof}

We now have the necessary tools to prove Theorem~\ref{thm:logical_limits}.
Part~\ref{item:logi_convergence} has already been shown by Compton~\cite{Compton89II}, so we will 
only supply proofs for the other two parts.

\begin{proof}[Proof of Theorem~\ref{thm:logical_limits} Part~\ref{item:logi_zero_one}]
Fix an arbitrary sentence $\varphi \in \TOTO$, let $d$ be its quantifier depth and 
let $h=h(d)$ be as supplied by Corollary~\ref{cor:nietteun}.
Fix a permutation $\pi$ satisfying $C_1(\pi) = h$ for $i=1,\dots,h$, and let 
$\Pi_n \sim \Mallows(n,q)$ as usual.
By Theorem~\ref{thm:normal} 
we have 

$$ \Pee( C_1(\Pi_n), \dots, C_h(\Pi_n) \geq h ) \xrightarrow[n\to\infty]{} 1. $$

\noindent
By the previous corollary we thus have that 
    \begin{equation}
        \lim_{n\to \infty} \bP \left( \Pi_n\models \phi \right) = 1_{\{\pi \models \phi\}}.
    \end{equation}
In particular the limiting probability is $\in \{0,1\}$, which is what needed to be shown.  
\end{proof}

\begin{proof}[Proof of Theorem~\ref{thm:logical_limits} Part~\ref{item:logi_non_convergence}]
    Theorem~\ref{thm:odd_cycles} tells that for $q>1$ there exists a value $x$ such that, with 
    $\Pi_n\sim\Mallows(n,q)$ and 
    $\Sigma \sim \Mallows(\bZ, 1/q)$, we have 

   \begin{equation}
       \lim_{n\to\infty}
         \bP \left( C_1(\Pi_{2n+1})\geq x \right) = \bP \left( C_1(r \circ \Sigma)\geq x \right) 
        \neq
        \bP \left(C_1(\rho \circ \Sigma)\geq x \right) = 
        \lim_{n\to\infty} \bP \left( C_1(\Pi_{2n})\geq x \right).
    \end{equation}
   
   \noindent
   The event that $C_1(\Pi_{n})\geq x$ 
   can clearly be queried by a sentence in $\TOOB$.
\end{proof}

\section{Proof of Theorem~\ref{thm:toto}~Part~\ref{part:conv_toto}\label{sec:proof_conv_toto}}

The approach we will take to prove Theorem~\ref{thm:toto}~Part~\ref{part:conv_toto} is inspired by the 
approach taken by Lynch in \cite{Lynch} where a convergence law for random strings over the alphabet $\{0,1\}$ 
is proven for various distributions over the letters. 
Lynch defines a Markov chain on a finite state space that follows the equivalence class of such a random string 
as its length increases, and then uses standard convergence results for Markov chains to conclude the argument. 
The Markov chain that we will define is more complicated to analyze than the one used in \cite{Lynch}, mainly because 
it is defined on a countably infinite state space. 

Throughout this section we will consider permutations in the first--order language $\TOTO$, that is, using the 
two total orders $<_1$ and $<_2$ as described in Section~\ref{subsec:TOOB_TOTO}. 
So $\pi \equiv_d \sigma$ now means that $\pi$ and $\sigma$ agree on all $\TOTO$ sentences of 
depth at most $d$. 
As per Lemma~\ref{lem:finite_classes}, there are finitely many equivalence classes for the equivalence 
relation $\equiv_d$. Given some fixed $d$, we denote these classes by $\cE_1,\ldots, \cE_\ell$, where 
$\ell = \ell(d)$. For a permutation $\pi$ we let $ [\pi]_d $ be the equivalence class containing $\pi$. 
It is convenient to also allow the ``permutation of length zero'', denoted $\piempty$. 
In terms of $\TOTO$ this is simply the $(<_1,<_2)$-structure with domain $S=\emptyset$.
Recall the definition of $\oplus$ from Section~\ref{subsec:constr_trunc}. It is convenient, and makes sense, to 
set $\piempty \oplus \sigma = \sigma \oplus \piempty = \sigma$ for every permutation $\sigma$.

We denote by $\on{id}_n \in S_n$ the identity on $[n]$. Albert et al.~\cite{Albert} have shown the following 
useful facts:

\begin{prop}[\cite{Albert}, Proposition 26]\label{prop:only_fixed}
    Let $n,m$ and $d$ be positive integers with $n,m \geq 2^d-1$. Then we have $\on{id}_m \equiv_d \on{id}_n$.
\end{prop}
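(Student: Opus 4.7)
The plan is to apply the Ehrenfeucht--Fra\"{i}ss\'{e} game characterization of $\equiv_d$, which says that two structures satisfy the same sentences of quantifier depth at most $d$ precisely when Duplicator has a winning strategy in the $d$-round EF game on them.

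The first step is a reduction. In $\on{id}_n$ both orderings $<_1$ and $<_2$ coincide with the usual linear order on $[n]$. Hence the syntactic operation on $\TOTO$ formulas that replaces every occurrence of the symbol $<_2$ by $<_1$ preserves quantifier depth, and when evaluated on any identity permutation it preserves truth value as well. Consequently $\on{id}_n \equiv_d \on{id}_m$ in $\TOTO$ is equivalent to the finite linear orders $([n],<)$ and $([m],<)$ satisfying the same sentences of quantifier depth at most $d$ in the language with a single binary relation $<$.

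The second step is the classical EF-game argument for finite linear orders. Duplicator maintains the following invariant throughout the game: after $j$ rounds, with pebbles $a_1 < \cdots < a_j$ on one side and matching pebbles $b_1 < \cdots < b_j$ on the other (using boundary conventions $a_0 = b_0 = 0$, $a_{j+1} = n+1$, $b_{j+1} = m+1$), every pair of corresponding consecutive gaps either satisfies $a_{i+1}-a_i = b_{i+1}-b_i$, or both gaps have size at least $2^{d-j}$. When Spoiler plays inside a gap on one side, Duplicator copies the move if that gap is already synchronized, and otherwise splits the two gaps so that each of the four resulting pieces either agrees exactly or has size at least $2^{d-j-1}$; this is possible because $2^{d-j} \geq 2 \cdot 2^{d-j-1}$. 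The assumption $n,m \geq 2^d - 1$ makes the invariant hold at the start, and after $d$ rounds the configuration is a partial isomorphism, so Duplicator wins.

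The main obstacle is just the bookkeeping for the threshold $2^d - 1$ in the EF induction, but this is standard fare. An alternative would be to cite the classical result directly: two finite linear orders of sufficient size are indistinguishable at quantifier depth $d$ (see e.g.\ Example 3.14 in \cite{Libkin}), from which the proposition follows immediately via the reduction of the first step.
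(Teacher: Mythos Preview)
The paper does not supply its own proof of this proposition; it is cited from \cite{Albert} without argument. Your proof is correct and is the standard one: on the identity permutation the two orders $<_1$ and $<_2$ coincide, so the question reduces to the classical fact that finite linear orders of size at least $2^d-1$ are $d$-equivalent, which you establish via the usual Ehrenfeucht--Fra\"{i}ss\'{e} gap-splitting strategy (and which, as you note, is also Example~2.3.1/3.14 material in \cite{Libkin}).
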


\begin{prop}[\cite{Albert}, Proposition 28]\label{lem:concat_equiv}
    Suppose the permutations $\pi_1,\dots,\pi_n, \sigma_1,\dots,\sigma_n$ satisfy $\sigma_i \equiv_d \pi_i$
    for $i=1,\dots,n$. Then $\sigma_1\oplus\ldots\oplus\sigma_n \equiv_d \pi_1\oplus\ldots\oplus\pi_n$. 
\end{prop}

Until further notice we let $0<q<1$ be fixed. 
We couple $\Pi \sim \Mallows(\eN,q)$ and $\Pi_1, \Pi_2, \dots$ with $\Pi_i \sim \Mallows(n,q)$ via the 
coupling described in Section~\ref{subsec:constr_trunc}. That is, $\Pi_n= \rk( \Pi(1),\dots,\Pi(n) )$.
As usual,we denote by $Z_1,Z_2,\dots$ the sequence of i.i.d.~$\Geo(1-q)$--variables that generates $\Pi$.
We let $T_1,T_2,\dots$ and $X_1,X_2,\dots$ and $\tilde{\Pi}_1,\tilde{\Pi}_2,\dots$ and $N(n)$ be as 
defined in Section~\ref{subsec:constr_trunc}.

Let us say a vector $(y_1,\dots,y_k) \in \eN^k$ is {\em allowable} if $y_1,\dots,y_k$ are distinct and
$\{y_1,\dots,y_j\} \neq [j]$ for all $1\leq j \leq k$. Let us denote by $\Acal$ the set of all 
allowable sequences, of any length. Here we also consider the sequence of length zero allowable. 
We will denote this ``empty sequence'' by $\sempty$.

We define a Markov chain $(M_n)_n$ on state space

$$ \Scal := \{ \cE_1,\dots, \cE_\ell \} \times \Acal. $$

Given that $M_n = ([\pi]_d, (y_1,\dots,y_k) )$ for some permutation $\pi$ and 
$(y_1,\dots,y_k) \in \Acal$, we determine $M_{n+1}$ by drawing $Z\sim \Geo(1-q)$ independently of
the previous history, letting $Y$ be the $Z$-th smallest element of $\eN \setminus \{y_1,\dots,y_k\}$ and 
setting

$$ M_{n+1} = \begin{cases}
              \left([\pi]_d, (y_1,\dots,y_k,Y) \right) & \text{ if $(y_1,\dots,y_k,Y) \in \Acal$, } \\
              \left([\pi \oplus (y_1,\dots,y_k,Y)]_d, \sempty\right) & \text{ otherwise. }
             \end{cases}
$$             

\noindent
(Here we also allow $k=0$ in which case $(y_1,\dots,y_k)$ should be interpreted as the empty sequence $\sempty$.) 

Having another look at~\eqref{eq:Pinoplus} and Lemma~\ref{lem:concat_equiv}, we see that $M_n$ describes the evolution of 

\begin{equation}\label{eq:MnPin} 
\left( [\Pi_{T_{N(n)}}]_d, ( \Pi(T_{N(n)}+1)-T_{N(n)}, \dots, \Pi(n)-T_{N(n)} ) \right), 
\end{equation}

\noindent
where the second argument is interpreted as the empty sequence if $T_{N(n)} = n$ and where as 
the starting state we take $M_0 = ([\piempty]_d, \sempty)$. 

Recall that for states $i, j$ of some Markov chain, the notation $i \to j$ means that $j$ is reachable from $i$.

\begin{lem}\label{lem:to_empty_sequence}
    For every state $(\cE,s) \in \Scal$ there exists an $\equiv_d$--equivalence class $\cE'$ such that 
    \begin{equation}
        (\cE,s) \to (\cE',\sempty). 
    \end{equation}
\end{lem}

\begin{proof}
    Suppose $M_n = (\cE,s)$ with $s=(y_1,\dots,y_k)$ and $k>0$. 
    In every subsequent step, with probability $1-q$ we have $Z=1$, which means that 
    $Y$ is the first element of $\eN\setminus \{y_1,\dots,y_k\}$.
    Therefore, we can reach a state of the form $(\cE',\sempty)$ in 
    $\max(y_1,\dots,y_k)-k$ steps.
    
    In case when $M_n = (\cE,s)$ with $s=\sempty$ then we can reach the state $(\cE',\sempty)$ in one step
    where $\cE' = [\pi \oplus \on{id}_1]_d$ with $\pi \in \cE$ arbitrary.
\end{proof}

The next lemma contains all the information about the chain $M_n$ we will need. 

\begin{lem}\label{lem:Markov_Master} The following hold for the Markov chain $(M_n)_n$:
    \begin{enumerate}
        \item Every recurrent communicating class of $M_n$ contains an element of the form $(\cE,\sempty)$;
        \item\label{itm:MM_finitely_many} There are only finitely many recurrent communicating classes of $M_n$;
        \item\label{itm:MM_positive_recurrent} Every recurrent communicating class of $M_n$ is positive recurrent;
        \item\label{itm:MM_aperiodic} Every recurrent communicating class of $M_n$ is aperiodic;
        \item\label{itm:MM_reach_recurrent} With probability one, the chain $M_n$ is in a positive recurrent state for some $n\geq 1$. 
    \end{enumerate}
\end{lem}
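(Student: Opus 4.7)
The plan is to exploit the regenerative structure linking the countably infinite chain $M_n$ to the finite-state reduced chain $L_i = [\Pi_{T_i}]_d$, whose states correspond to those states of $M_n$ of the form $(\cE, \emptyset)$ via $M_{T_i} = (L_i, \emptyset)$. For Part (i), I first note that by Lemma~\ref{lem:to_empty_sequence} every state of $M_n$ can reach some state of the form $(\cE', \emptyset)$, so every recurrent communicating class must contain such a state. A path in $M_n$ between two $(\cE, \emptyset)$-states passes only through complete regeneration blocks and hence corresponds to a path in $L_i$; this shows that a state $(\cE, \emptyset)$ is recurrent in $M_n$ if and only if $\cE$ is recurrent in $L_i$. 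Part (ii) is then immediate: the finite chain $L_i$ has only finitely many recurrent classes, and distinct recurrent classes of $M_n$ must contain distinct $(\cE, \emptyset)$-states.

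For Part (iii), I fix a recurrent $(\cE, \emptyset)$ and write the return time to it in $M_n$ as $\sum_{i=1}^N \xi_i$, where $\xi_i = T_i - T_{i-1}$ and $N$ is the return time to $\cE$ in the reduced chain starting from $\cE$. The regenerative structure makes the pairs $([\pi_i]_d, \xi_i)$ i.i.d., where $\pi_i$ is the (normalised) $i$-th regeneration block. By Corollary~\ref{cor:equiv_of_union}, $L_j$ is a function of $[\pi_1]_d, \ldots, [\pi_j]_d$ alone, so $\{N \geq i\}$ is measurable with respect to the first $i-1$ of these pairs and hence independent of $\xi_i$. A Tonelli computation then yields $\bE \sum_{i=1}^N \xi_i = \bE T_1 \cdot \bE N$, which is finite since $\bE T_1 < \infty$ by Lemma~\ref{lem:finite_moments_T} and $\bE N < \infty$ because the reduced chain, restricted to its recurrent class, is a positive recurrent chain on a finite state space.

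For Part (iv), let $C$ be a recurrent class with $(\cE, \emptyset) \in C$ and write $\cE = [\pi_0]_d$. When $M_n = (\cE, \emptyset)$ and $Z_{n+1} = 1$, which occurs with probability $1-q > 0$, a single fixed point is appended and is itself a regeneration, sending us to $([\pi_0 \oplus \on{id}_1]_d, \emptyset)$; by Corollary~\ref{cor:equiv_of_union} this depends only on $\cE$. Iterating this self-map on classes, Corollary~\ref{cor:many_fixed_at_end} implies that $[\pi_0 \oplus \on{id}_k]_d$ stabilises at some $\cE^*$ for $k \geq 2^d - 1$. Since $(\cE^*, \emptyset)$ is reachable from $(\cE, \emptyset)$ we have $\cE^* \in C$, and since $(\cE^*, \emptyset) \to (\cE^*, \emptyset)$ in one step with probability $1-q$, the class $C$ is aperiodic. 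Part (v) now follows quickly: Lemma~\ref{lem:hit_recurrent} applied to $L_i$ gives an a.s.\ finite $\tau$ with $L_\tau$ recurrent, Lemma~\ref{lem:finite_Ti} gives $T_\tau < \infty$ a.s., and then $M_{T_\tau} = (L_\tau, \emptyset)$ lies in a recurrent class of $M_n$ by Part (i).

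The main obstacle I expect is the positive-recurrence calculation in Part (iii): one must set up the joint distribution of the pairs $([\pi_i]_d, \xi_i)$ carefully and verify that $\{N \geq i\}$ depends only on the first $i-1$ block-classes, which is what makes the Wald-type identity $\bE \sum_{i=1}^N \xi_i = \bE T_1 \cdot \bE N$ valid despite $N$ not being a stopping time for the filtration generated by the $\xi_j$'s alone. Once this correspondence with the finite chain $L_i$ is in place, the remaining parts reduce to standard finite-state Markov chain arguments together with the stabilisation principle in Corollary~\ref{cor:many_fixed_at_end}.
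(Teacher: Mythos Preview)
Your proof is correct. Parts (i), (ii), (iv), and (v) follow essentially the same route as the paper: Lemma~\ref{lem:to_empty_sequence} forces each recurrent class to contain some $(\cE,\emptyset)$, finiteness of the $\equiv_d$-classes gives (ii), appending fixed points and invoking Corollary~\ref{cor:many_fixed_at_end} gives aperiodicity, and Lemma~\ref{lem:hit_recurrent} applied to $L_i$ together with $T_i<\infty$ a.s.\ gives (v).

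Where you genuinely diverge is Part (iii). The paper does not use the Wald identity. Instead it picks $(\cE,\emptyset)$ and $(\cE',\emptyset)$ in the same recurrent class so as to \emph{maximise} the expected hitting time $\bE_{(\cE,\emptyset)}\tau$ over all such pairs, and then argues that for any fixed $N$,
\[
\bE_{(\cE,\emptyset)}\tau \;\le\; \bE T_N \;+\; \bP_{\cE}(\tilde\tau\ge N)\,\bE_{(\cE,\emptyset)}\tau,
\]
the second term coming from the maximality of the chosen pair. Choosing $N$ with $\bP_{\cE}(\tilde\tau\ge N)<1$ (possible since $\tilde\tau$ has finite mean in the finite chain $L_i$) and rearranging gives $\bE_{(\cE,\emptyset)}\tau<\infty$. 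Your argument is cleaner and more informative: it yields the exact identity $\bE_{(\cE,\emptyset)}\tau=\bE T_1\cdot\bE_{\cE}\tilde\tau$ rather than just a finiteness bound, and it isolates precisely why the regenerative block structure matters---the i.i.d.\ pairs $([\pi_i]_d,\xi_i)$ make $\{N\ge i\}$ independent of $\xi_i$. The paper's approach, on the other hand, avoids having to spell out this independence and works directly with the Markov property, at the cost of a slightly opaque bootstrapping inequality.
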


\begin{proof}
    For the first two claims, suppose that $(\cE,s)$ is contained in some recurrent communicating class. 
    Then Lemma~\ref{lem:to_empty_sequence} implies that this communicating class contains an element of the 
    form $(\cE',\sempty)$. There are only finitely many such elements, proving the first two claims of the lemma. 

    For the third claim, consider a recurrent class of $M_n$. 
    It contains an element $\alpha$ of the form $\alpha = (\cE,\sempty)$. We define the ``reduced chain'' $(L_i)_i$ by 
    
    $$ L_i := M_{T_i}, $$
    
    \noindent
    so that $L_i$ takes values in the finite set $\{\cE_1,\dots,\cE_\ell\}\times\{\sempty\}$. It is easily 
    seen that $L$ is also a Markov chain.
    The state $\alpha$ is recurrent for $L$ and since $L$ takes only finitely many states, 
    the state $\alpha$ is in fact positively recurrent for $L$.
    
    We denote by $\tau_\alpha^+$ the first visit time of $M_n$ to $\alpha$, and by 
    $\tilde{\tau}_\alpha^+$ the first visit time of $L_i$ to $\alpha$.
    That $\alpha$ is positively recurrent for $L$ means that 
    $\Ee_{\alpha} \tilde{\tau}_\alpha^+ < \infty$. We need to 
    deduce that $\Ee_\alpha\tilde{\tau}_\alpha^+ < \infty$ as well. 
    
    Let $\delta > 0$ be small but fixed, to be determined in the course of the proof. 
    We have
    
    $$ \Pee_\alpha( \tau_\alpha^+ > n ) \leq \Pee_\alpha( \tilde{\tau}_\alpha^+ > \delta n ) 
    + \Pee_\alpha( T_{\floor{\delta n}} > n ). $$
    
    Note that the second probability in the RHS does not depend on the starting state. 
    For convenience we'll write $m:=\floor{\delta n}$.
    Recall that 
    $T_m = X_1+\dots+X_m$ where $X_1,\dots, X_m$ are i.i.d.~and distributed like $T_1$.
    We see that 
    
    $$ \begin{array}{rcl} \Pee( T_m > n ) & = & \Pee( T_m - \Ee T_m > n - \Ee T_m ) \\
    & = & \Pee( T_m - m\Ee T_1 > n - m\Ee T_1 ) \\ 
    & \leq & \Pee(  T_m - m\Ee T_1 > n/2 ) \\
    & \leq & \Pee\left( \left( T_m - m\Ee T_1 \right)^4 > n^4/16 \right) \\
    & \leq & \frac{16 \Ee \left( T_m - m\Ee T_1 \right)^4}{n^4}.
    \end{array} $$
    
    \noindent
    where the third line holds since $\Ee T_1 < \infty$ by Lemma~\ref{lem:Rplus} and
    assuming the constant $\delta$ is chosen sufficiently small. The last line uses Markov's inequality.
    Now note that 
    
    $$ \begin{array}{rcl} \Ee \left( T_m - m\Ee T_1 \right)^4
    & = & \Ee \left( (X_1-\Ee T_1) + \dots + (X_m-\Ee T_1) \right)^4  \\[2ex]  
    & = & \displaystyle \sum_{i=1}^m \sum_{j=1}^m \sum_{k=1}^m \sum_{\ell=1}^m 
    \Ee\left( (X_i-\Ee T_1)\cdot(X_j-\Ee T_1)\cdot(X_k-\Ee T_1)\cdot(X_\ell-\Ee T_1) \right) \\[2ex]
    & = & m \Ee (X_1-\Ee T_1)^4 + 3 m(m-1) \left( \Ee (X_1-\Ee T_1)^2 \right)^2 \\
    & = & O( n^2 ), \end{array} $$
    
    \noindent 
    again using that all moments of $T_1$ are finite (and that the $X_i$ are i.i.d.~distributed like $T_1$).
    It follows that 
    
    $$ \Pee( T_m > n ) = O( n^{-2} ). $$
    
    This gives
    
    $$ \begin{array}{rcl} \Ee_\alpha \tau_\alpha^+ & = & \sum_n \Pee_\alpha(\tau_\alpha^+ > n ) \\
    & \leq & \sum_{n} \Pee_\alpha( \tilde{\tau}_\alpha^+ > \delta n ) + \sum_n O(n^{-2} ) \\
    & \leq & \ceil{1/\delta} \cdot \sum_k \Pee_\alpha( \tilde{\tau}_\alpha^+ > k  ) + O(1) \\ 
    & = & \ceil{1/\delta} \cdot \Ee_\alpha\tilde{\tau}_\alpha^+ + O(1) \\
    & < & \infty, \end{array} $$
    
    \noindent
    using a standard formula for the expectation of non-negative integer valued random variables.
    We've established that $\alpha$ is positively recurrent. Therefore, so are all states in 
    the same communicating class.

    For the fourth claim, consider a recurrent class containing $([\pi]_d,\sempty)$ for some permutation $\pi$. 
    For all $n,t\geq 1$ we have
    \begin{equation}
        \bP \left( M_{n+t} = (  [\pi \oplus \on{id}_t]_d ,\sempty ) \,|\, M_n = ([\pi]_d,\sempty)\right) \geq 
        (1-q)^t >0.
    \end{equation}
    In particular the recurrent class contains both $([\pi \oplus \on{id}_{2^d-1}]_d,\sempty)$ and 
    $( [\pi \oplus \on{id}_{2^d}]_d,\sempty)$, where $M_n$ may transition from the former to the latter in one step 
    with positive probability. By Propositions~\ref{prop:only_fixed} and~\ref{lem:concat_equiv} we have $[\pi \oplus \on{id}_{2^d-1}]_d 
    = [\pi \oplus \on{id}_{2^d}]_d$ so that 
    the class is indeed aperiodic.

    For the fifth claim, note the probability that $M_n$ is never equal to an element $(\cE,\sempty)$ of 
    a recurrent class is exactly the probability that $L_i$ is never equal to a recurrent state $(\cE,\sempty)$. 
    This probability is zero by Lemma~\ref{lem:hit_recurrent}.
\end{proof}

\begin{proof}[Proof of Theorem~\ref{thm:toto}~Part~\ref{part:conv_toto}]
    Fix some $\phi \in {\TOTO}$ with quantifier depth $d$. 
Let the recurrent classes of $M_n$ be denoted $\cM_1,\ldots, \cM_k$. 
(There are finitely many of them by Lemma~\ref{lem:Markov_Master} Part~\ref{itm:MM_finitely_many}.) Define 
\begin{align}
    \tau_{\cM_i} &= \inf \{ n \geq 0 \mid M_n\in \cM_i  \}.
\end{align}
    By~\eqref{eq:Pinoplus} and Proposition~\ref{lem:concat_equiv}, the equivalence class under $\equiv_d$ of $\Pi_n$ can be 
    recovered from $M_n$. 
    In particular there exist $\cM^\phi_1\subseteq \cM_1, \ldots, \cM^\phi_k\subseteq \cM_k$ such that, for all $n$,
    $\Pi_n \models \varphi$ if $M_n \in \cM_i^\varphi$ and 
    $\Pi_n \models \neg\varphi$ if $M_n \in \cM_i \setminus \cM_i^\varphi$.
    (Here and in the remainder of the proof we implicitly take $M_n$ as given via~\eqref{eq:MnPin}.)
    This gives 
   
   $$
    \sum_{i=1}^k \bP \left( M_n \in \cM_i^\varphi  \right) \leq 
       \bP \left( \Pi_n\models \phi \right) 
       \leq   \sum_{i=1}^k \bP \left( M_n \in \cM_i^\varphi  \right) 
       + \Pee( \tau_{\cM_1}, \dots, \tau_{\cM_k} > n ).
    $$

    \noindent
    By Lemma~~\ref{lem:convergence_of_class} each of the terms in the LHS has a limit and by 
    Lemma~\ref{lem:Markov_Master} Part~\ref{itm:MM_reach_recurrent} we have 
    
    $$ \lim_{n\to\infty} \Pee( \tau_{\cM_1}, \dots, \tau_{\cM_k} > n ) = 0. $$
    
    \noindent
    This establishes that $\lim_n \Pee( \Pi_n \models \varphi )$ exists.
    
    To see that this limit is not restricted to being $0$ or $1$, let $\phi$ be the $\TOTO$ 
    sentence \TV{$\exists x (\neg \exists y (y <_1 x \vee y<_2 x))$} expressing that $\Pi_n(1) = 1$. Then (still considering the case $q<1$ fixed), we have
    \begin{equation}
        \bP \left( \Pi_n \models \phi \right)  = \bP \left( \TGeo(n,1-q) = 1 \right)\  \xrightarrow[n\to\infty]{}\   1-q \ \notin\  \{0,1\}.
    \end{equation}

    It remains to handle the (fixed) $q>1$ case. In this case we have for all $\TOTO$ sentences $\phi$ that
    \begin{equation}
        \lim_{n\to\infty} \bP \left( \Pi_n\models \phi \right) = \lim_{n\to\infty} \bP \left( r_n\circ \Pi_n \models \phi^{\on{reverse}} \right) ,
    \end{equation}
    where $\phi^{\on{reverse}}$ is as in Lemma~\ref{lem:reverse_formula}. As $r_n\circ \Pi_n\isd \Mallows(n,1/q)$ we can use the 
    result for $q<1$ to conclude that this limit exists and that this limit is not in $\{0,1\}$ for the $\TOTO$ sentence expressing that $\Pi_n(1) = n$.
\end{proof}

\section{Non--convergence in \texorpdfstring{$\TOTO$}{TOTO} for uniformly random permutations\label{sec:totoqis1}}

In this section we state and prove a proposition that will be used later on in the proof of Theorem~\ref{thm:toto}~Part~\ref{part:toto_non_convergence}.

Recall the definition of $W(\cdot)$ and $\log^{**} (\cdot)$ given in Section \ref{subsec:definitions}. We will prove the following result:

\begin{prop}\label{prop:oscilate}
    There exists a $\phi \in \TOTO$ such that for $\Pi_n \sim \Mallows(n,1)$ and $n$ satisfying $W^{(2)}(\log^{**}\log^{**}n-1)<{\log\log n}$, we have
    \begin{equation}
        \bP( \Pi_n \models \phi ) = 
        \begin{cases}
            1 - O( n^{-100} ) & \text{if } \log^{**}\log^{**}n \text{ is even}, \\
            O( n^{-100} ) &  \text{if } \log^{**}\log^{**}n \text{ is odd}.
        \end{cases}   
    \end{equation}
\end{prop}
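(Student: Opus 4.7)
The plan is to build an auxiliary directed graph from the random permutation $\Pi_n$ so that (i) the graph is sufficiently random and universal with overwhelming probability, and (ii) a $\TOTO$ sentence about $\Pi_n$ can simulate a first-order sentence about the graph. I then adapt the Shelah--Spencer technique to produce a graph sentence whose truth oscillates with the parity of the iterated logarithm of the graph's order. Concretely, I set $N \be \lfloor c \log\log n\rfloor$ for a suitable constant $c>0$ and partition $[n]$ into $N$ consecutive intervals $I_1,\dots, I_N$ of nearly equal size (identifiable in $\TOTO$ using iterates of $\succ_1$ from Section~\ref{subsec:TOOB_TOTO}). Using a fixed $\TOTO$-expressible pattern such as the directed cherry of Figure~\ref{fig:cherry}, I declare a directed edge $i\to j$ in an auxiliary graph $G=G(\Pi_n)$ exactly when the restriction of $\Pi_n$ to $I_i\cup I_j$ realizes the pattern in a prescribed way; for uniform $\Pi_n$, each edge indicator then takes value $1$ with a fixed probability $p\in(0,1)$, and indicators attached to disjoint interval pairs are mutually independent.

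The next step is to show that, with probability $1-O(n^{-100})$, $G$ satisfies the Shelah--Spencer-style extension axioms up to the level needed to encode arithmetic on $[N]$. The number of relevant extension types is $\exp(O(\log^2\log n))$, while each copy of a given extension has expected count polynomial in $n$ (there are roughly $n/N$ candidate representatives inside each interval). Concentration via Chernoff (Theorem~\ref{thm:chernoff}), combined with Theorem~\ref{thm:generalized_chernoff} to handle residual dependence between overlapping interval pairs, followed by a union bound over all types and embeddings, delivers the required probability. Heuristically, a.a.s. $G$ looks (from the Shelah--Spencer perspective) like the truncation to $N$ vertices of the countable random directed graph.

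The third step adapts the Shelah--Spencer construction of~\cite{ShelahSpencer} to produce a first-order sentence $\psi$ in the directed-graph language whose truth on $G$ depends only on the parity of $\log^{**}\log^{**}|V(G)|$. The adaptation uses graphs to model arithmetic: iterated quantifier blocks express statements of the form ``the number of elements satisfying $X$ is at least the tower of height indexed by the number of elements satisfying $Y$'', stacking parity-sensitive thresholds indexed by $W$ and its inverse $\log^{**}$. Translation of $\psi$ into a $\TOTO$ sentence $\phi$ on $\Pi_n$ is then mechanical: a quantifier over a vertex $i\in[N]$ becomes a quantifier over a $\TOTO$-definable representative of $I_i$, and the edge relation becomes the $\TOTO$ formula witnessing the chosen pattern. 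The technical hypothesis $W^{(2)}(\log^{**}\log^{**}n-1)<\log\log n$ is exactly what is needed to ensure that $N=\Theta(\log\log n)$ falls in the window where $\log^{**}\log^{**}N$ and $\log^{**}\log^{**}n$ have matching parity, producing the probabilities claimed in~\eqref{eq:cases_prb}.

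The main obstacle will be engineering enough randomness in $G$ to support the Shelah--Spencer extension axioms while keeping the edges defined by a fixed, $\TOTO$-expressible pattern of bounded quantifier depth. The cherry-based encoding must be chosen so that edge indicators are both formally expressible in $\TOTO$ and close enough to an i.i.d.\ Bernoulli family that the very strong $n^{-100}$ error probability --- needed later when the construction is iterated several times in the proof of Theorem~\ref{thm:toto}~Part~\ref{part:toto_non_convergence} --- is actually achievable, rather than merely a weaker $o(1)$ bound.
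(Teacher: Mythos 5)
Your high-level plan (directed graphs from interval pairs, Shelah--Spencer arithmetic, translate back to $\TOTO$) is the right shape, but there are two gaps that are fatal as written, and both are exactly the places where the paper's construction is more elaborate than your sketch.

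\textbf{The vertex set is not $\TOTO$-definable as you describe it.} You set $N = \lfloor c\log\log n\rfloor$ and partition $[n]$ into $N$ consecutive intervals of length $\Theta(n/\log\log n)$, ``identifiable in $\TOTO$ using iterates of $\succ_1$''. But a single $\TOTO$ sentence has a fixed quantifier depth $d$ independent of $n$, and $\succ_1^{(k)}$ has quantifier depth growing with $k$; you would need $k = \Theta(n/\log\log n)$ to locate the interval boundaries, which is not permissible. This is precisely why the paper cannot use fixed-position intervals. It instead defines the vertices as the \emph{random} minimal intervals $\cI_k(I)$ between consecutive points of $W_k(I)$ (Section~\ref{sec:graphs_toto}): membership of an element in $W_k(I)$ is a bounded-depth $\TOTO$ formula (via a fixed number of $\succ_2$ iterates), so the vertices of the graph are themselves first-order definable objects. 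Lemma~\ref{lem:vtexlem} and Lemma~\ref{lem:shrink_CkI} are then used to guarantee one such $I$ exists with $|\cI_k(I)| \in (\log\log n, 2\log\log n)$.

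\textbf{The $n^{-100}$ error is unreachable from a single graph.} You define one directed graph $G$ on $N=\Theta(\log\log n)$ vertices and hope it satisfies Shelah--Spencer-type extension axioms with probability $1-O(n^{-100})$. This cannot work: even in the most favorable independent model, the probability that a graph on $N$ vertices fails any fixed nontrivial extension axiom is at least $\exp(-O(N)) = \exp(-O(\log\log n))$, which dwarfs $n^{-100} = e^{-100\log n}$. Indeed your own heuristic count (``roughly $n/N$ candidate representatives inside each interval'') is incompatible with the fact that $G$ has only $N$ vertices; there are $N-O(1)$ candidate witnesses for an extension, not $\Theta(n/N)$. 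The paper's key move is therefore \emph{not} to rely on one random graph having good extension properties, but to observe that there are $n^{\Omega(1)}$ essentially independent choices of the auxiliary interval $J$, each producing an almost-uniform directed graph on the same vertex set $\cI_k(I)$; Lemma~\ref{lem:exists_J} shows that with probability $1 - e^{-n^{\Omega(1)}}$ \emph{every} directed graph on those $N$ vertices is realized by some $J$. This universality is then used to existentially quantify (in $\TOTO$) over four intervals $J_D,J_E,J_T,J_W$ that encode the doubling, exponentiation, tower, and wowzer relations directly (Section~\ref{sec:arithSO}), rather than deriving arithmetic from extension axioms. The same abundance of $J$'s underlies Lemma~\ref{lem:matching}, which gives the $\an{Bigger}$ comparison needed in the final sentence~\eqref{eq:non_con_phi}. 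Without this ``many $J$'s for free'' mechanism, the very strong error bound the proposition asserts cannot be attained.

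In short: your proposal captures the Shelah--Spencer flavor but misses the two structural features --- random, $\TOTO$-definable vertex intervals, and abundance of encoding intervals $J$ --- that make both the logical definability and the $n^{-100}$ error possible.
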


\noindent
(We remark that by Lemma~\ref{lem:composed_large_functions} the condition 
$W^{(2)}(\log^{**}\log^{**}n-1)<{\log\log n}$ is for instance satisfied -- with plenty of room to spare -- if 
$n$ is of the form $n = W^{(2)}(m)$ with $m>3$.)

Proposition~\ref{prop:oscilate} is an explicit version of a result given in \cite{Foy} by Foy and Woods who showed that 
there is a $\TOTO$ sentence $\psi$ such that $\bP \left( \Mallows(n,1)\models \psi \right) $ does not have a limit. 
Our plan of attack will follow in broad lines the proof given by Shelah and Spencer in \cite{ShelahSpencer} to show 
non--convergence in first--order logic 
for graphs in the Erd\H{o}s--R\'{e}nyi random graph $G(n,p)$ with $p$ near $n^{-1/7}$. 
See also Chapter 8 of the excellent monograph \cite{Spencerstrange} by Spencer for a similar argument 
for $p$ near $n^{-1/3}$.

\subsection{Arithmetic on sets in second--order logic\label{sec:arithSO}}

We move away from permutations for the moment, and 
consider a finite set $S$ equipped with a total order $<$. 
Our aim will be to determine a second--order sentence $\an{Parity}$ such that
$S\models \an{Parity}$ if and only if $\log^{**}\log^{**} |S|$ is even.

Given that there is a total order $<$, we can identity $S$ with $[n]$ in the obvious way, where of course
$n := |S|$.
Note that we can express that $i = 1$ in a (first order) logical formula via $\neg(\exists x : x<i)$.
Similarly we can express that $i = n$, and that $i=k$ or $i=n-k$ for any fixed $k$.
In Section~\ref{subsec:TOOB_TOTO} we defined the formulas $\succ_1^{(k)}$ using the $<_1$ relation in $\TOTO$.
We will use $\an{succ}^{(j)}$ to denote analogue of the formulas $\an{succ}_1^{(j)}$ with $<_1$ replaced by $<$. 
For clarity, we point out that $\succ^{(j)}(x,y)$ expresses that $y=x+j$.

Our sentence $\an{Parity}$ will begin with the existential quantification over four binary relations $R_D,R_E,R_T$ and 
$R_W$. Here the subscript $D$ stands for ``double'', $E$ for (base two) ``exponential'', 
$T$ for ``tower'', and $W$ for ``wowser''.
We want to demand that these relations are such that:

\begin{equation}\label{eq:REetc}
\begin{array}{rl}
    R_D(i,j) &\iff j=2i, \\
    R_E(i,j) &\iff j=2^i,\\
    R_T(i,j) &\iff j=T(i),\\
    R_W(i,j) &\iff j=W(i),
\end{array}
\end{equation}

\noindent 
where $T(i)$ and $W(i)$ denote the tower and wowser functions defined in Section \ref{subsec:definitions}.
For $R_D$, the following two formulas axiomatize that $R_D$ is as required:

\begin{equation}\label{eq:def_RD}
\begin{array}{l}
\forall \,i,j  : 
	 \left(\neg\exists x :(x< i) \right) \leftrightarrow \left( R_D(i,j) 
        \leftrightarrow \succ(i,j) \right)   \\
        \forall \,i,j  : 
        (\exists x : x < i) \leftrightarrow \left(R_D(i,j)  \leftrightarrow \left( \exists {i',j'} : 
        \succ(i', i)\wedge \succ^{(2)}(j', j) \wedge  R_D(i',j') \right)\right).
    \end{array}
\end{equation}

\noindent
(Translated into more intuitive/informal terms, the first line says that if $i$ is the first element of $S$, then 
$R_D(i,j)$ if and only if $j=i+1$.
The second line says that if $i$ is not the first element of $S$ then
$R_D(i,j)$ if and only if $R_D(i-1,j-2)$.)

For $R_E$, we can write:

\begin{equation}\label{eq:def_RE}
\begin{array}{l}
\forall \,i,j  : 
	 \left(\neg\exists x :(x< i) \right) \leftrightarrow \left( R_E(i,j) 
        \leftrightarrow R_D(i,j) \right)   \\
        \forall \,i,j  : 
        (\exists x : x < i) \leftrightarrow \left(R_E(i,j)  \leftrightarrow \left( \exists {i',j'} : 
        \succ(i', i)\wedge R_D(j', j) \wedge  R_E(i',j') \right)\right).
    \end{array}
\end{equation}

\noindent
(The first line says that if $i$ is the first element of $S$ then 
$R_E(i,j)$ if and only if $j=2i$.
The second line says that if $i$ is not the first element then 
$R_E(i,j)$ if and only if there are $i',j'$ such that $i=i'+1$ and 
$j=2j'$ and $R_E(i',j')$.)

It is easily seen that, to express that the relation $R_T$ is as desired, we can substitute in~\eqref{eq:def_RE} 
$R_D$ and $R_E$ by $R_E$ and $R_T$, respectively. 
Similarly, to express that $R_W$ is as desired, we can substitute $R_D$ and $R_E$ by $R_T$ and $R_W$, respectively. 

The conjunction of all the axioms we've obtained 
gives a formula $\an{Arith}(R_D,R_E,R_T,R_W)$ that is satisfied
if and only if the relations $R_D,R_E,R_T,R_W$ correctly encode the target arithmetic functions.

Given such relations we now want to express that $\log^{**} \log^{**} n$ is even, where $n:=|S|$. 
Suppose that there is an element $x\in S$ for which there exist 
$y,z \in S$ such that 
\begin{equation}
    x=W(y)\quad\text{ and }\quad y = W(z). 
\end{equation}
Then $\log^{**}\log^{**} x = z$. Now let $x$ be the largest element for which such $y$ and $z$ exist, certainly $x\leq n$. 
Now, if $x=n$, which we can check using $<$, then $z = \log^{**}\log^{**}n$ and we can query whether or not this is even by 
$\exists w : R_D(w,z)$. If instead $x<n$, then $\log^{**}(\log^{**} n) = z+1$, and we can again query whether 
this is even by $\neg \exists w : R_D(w,z)$. All of this can be formalized in a second--order 
formula $\an{EvenSize}(R_D,R_E,R_T,R_W)$ in a straightforward manner. 
So a second--order sentence $\an{Parity}$ of the form 
\begin{equation}\label{eq:evenLL}
    \an{Parity} \be \exists R_D,R_E,R_T,R_W :  \an{Arith}(R_D,R_E,R_T,R_W) \wedge \an{EvenSize}(R_D,R_E,R_T,R_W)
\end{equation}
exists such that $S\models \an{Parity}$ if and only if $\log^{**}\log^{**} |S|$ is even. 
Here $\an{Arith}$ expresses that the relations define the correct arithmetic operations, and $\an{EvenSize}$ uses 
these relations to express that $\log^{**}\log^{**} |S|$ is even. 
%

In broad lines we will do the following in the next few sections: on 
a random permutation $\Pi_n \sim \Mallows(n,1)$ we will define directed graphs using $\TOTO$ formulas, 
whose vertices are intervals $\{i,i+1,\dots,i+j\} \subseteq [n]$. 
These directed graphs will be so abundant that, with probability $1-o_n(1)$, we can find four such graphs, 
all defined on the same set $S$ of vertices/intervals of cardinality roughly $|S| \approx {\log \log n}$, that 
encode the correct relations $R_D,R_E,R_T$ and $R_W$ on this set.
We will then be able to exploit the fact that $\log^{**}(\log^{**}(\log(\log n)))$ oscillates 
between being even and odd indefinitely, giving the desired non--convergence.

\subsection{Defining directed graphs on permutations in \texorpdfstring{$\TOTO$}{TOTO}\label{sec:graphs_toto}}

We now exhibit our definition of directed graphs on $\Pi_n$ in $\TOTO$. 
In what follows all directed graphs will be simple, i.e., contain no self--loops or multiple arcs. 
Recall that we write $i<_1 j$ if and only if $i<j$ and $i<_2 j$ if and only if $\Pi_n(i) < \Pi_n(j)$. 
Inclusion of $i$ in an interval $\{a,\ldots, b\}$ can be checked by $(a = i)\vee(b=i)\vee
\left((a<_1 i) \wedge (i<_1 b)\right)$.

Let $\Pi_n\sim \Mallows(n,1)$. That is, $\Pi_n$ is selected uniformly at random from $S_n$. 
For disjoint sets $I, J \subseteq [n]$ we define the (random) set
\begin{align}
    S(I, J) \ :=&\ \{ i  : i \in \Pi_n(I), i+1 \in \Pi_n(J) \}\\[1pt]
    =&\ \{ \Pi_n(x) \mid x\in I, \Pi_n(x) +1 \in \Pi_n(J)   \}.
\end{align}

\begin{figure}
\centering
\begin{tikzpicture}

\draw (-6,3) -- (6,3); 

\draw (-6,3.1) -- (-6,2.9) node[below] {$1$}; 
\draw (6,3.1) -- (6,2.9) node[below] {$n$};

\draw(-5,3.18) -- (-1,3.18) node[midway,above] {$I$};
\draw(-5,3) -- (-5,3.18);
\draw(-1,3) -- (-1,3.18);

\draw(0,3.18) -- (4,3.18) node[midway,above] {$J$};
\draw(0,3) -- (0,3.18);
\draw(4,3) -- (4,3.18);

\draw (-6,0) -- (6,0); 
\draw (-6,0.1) -- (-6,-0.1) node[below] {$1$}; 
\draw (6,0.1) -- (6,-0.1) node[below] {$n$};

\draw[dashed,-latex] (-2,3) -- (-3,0.1); 
\node at (-3,0) [circle,fill,inner sep=1.5pt,label=below:{$i_1$}]{};
\node at (-2,3) [circle,fill,inner sep=1.5pt]{};

\draw[dashed,-latex] (0.3,3) -- (1.43-4,0.1); 
\node at (1.4-4,0) [circle,fill,inner sep=1.5pt]{};
\node at (0.3,3) [circle,fill,inner sep=1.5pt]{};

\draw[dashed,-latex] (-4.6,3) -- (0.95,0.1); 
\node at (1,0) [circle,fill,inner sep=1.5pt,label=below:{$i_2$}]{};
\node at (-4.6,3) [circle,fill,inner sep=1.5pt,label=below:{$x(I,J)\  \ \  \  $}]{};

\draw[dashed,-latex] (3.4,3) -- (1.43,0.1); 
\node at (1.4,0) [circle,fill,inner sep=1.5pt]{};
\node at (3.4,3) [circle,fill,inner sep=1.5pt,label=below:{$\qquad \quad y(I,J)$}]{};

\draw[-latex] (7,2.5) -- (7,0.5) node[midway,right] {$\Pi_n$}; 

\end{tikzpicture}
\caption{An illustration of $S(I,J)$, $x(I,J)$ and $y(I,J)$ in the case that $I$ and $J$ are disjoint intervals. The elements $i_1,i_2$ are both contained in $S(I,J)$, as $\Pi_n(\{i_1,i_2\}) +1 \subseteq \Pi_n(J)$. The element $x(I,J)$ is the smallest element in $I$ such that $\Pi_n(x(I,J)) \in S(I,J)$, and $y(I,J)$ is such that $\Pi_n(y(I,J)) -1 = \Pi_n(x(I,J))$.\label{fig:SIJ}}
\end{figure}
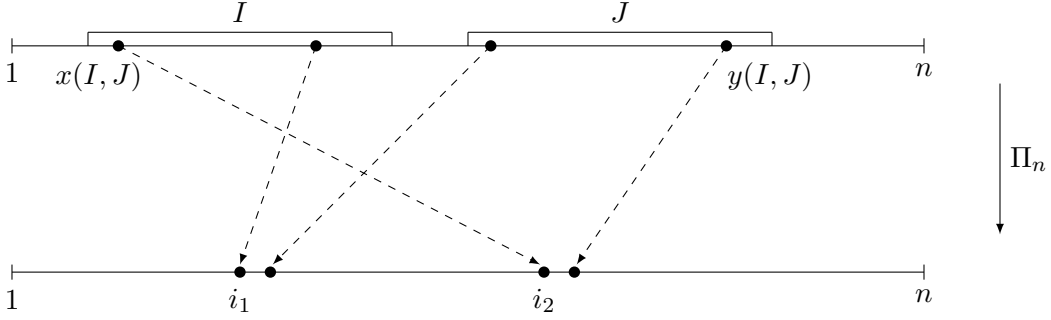

\noindent
For $I, J \subseteq [n]$ with $S(I, J) \neq \emptyset$, we define

$$ \begin{array}{rcl} 
x(I,J) & := & \min\{ x : \Pi_n(x) \in S(I,J) \} \\[1pt]
& = & \min\{ x \in I : \text{there is a $y \in J$ such that $\Pi_n(y) = \Pi_n(x)+1$} \}, \\[9pt]
y(I,J) & := & \Pi_n^{-1}(\Pi_n(x(I,J))+1).
\end{array} $$

\noindent
Thus $y(I,J)$ is the element of $J$ that gets mapped to $\Pi_n(x(I,J))+1$ by $\Pi_n$. If $S(I,J) = \emptyset$, then $x(I,J)$ and $y(I,J)$ are {\em undefined}. See Figure~\ref{fig:SIJ} for an illustration of these definitions. Membership of $x$ in $I$ or $J$ can be queried in $\TOTO$ if $I$ and $J$ are intervals using the $<_1$ ordering, and $\Pi_n(j) = \Pi_n(i) + 1$ implies simply that $j$ is the successor of $i$ in the $<_2$ ordering. So $x(I,J)$ and $y(I,J)$ can be determined in $\TOTO$ when $I$ and $J$ are intervals.

For a sequence $\Ical = (I_1, \dots, I_N)$  of subsets of $[n]$ and $J \subseteq [n]$ a single subset, we define the ordered pair $e(\Ical; J) \in \{I_1,\ldots, I_N\}\times \{I_1,\ldots, I_N\}$ as follows.

\begin{equation}
   e(\Ical; J) = (I_i , I_j) \quad \text{ if }\quad \begin{cases} 
     y(I_i, J) = \min\limits_{\ell=1,\dots, N} y(I_\ell, J), \text{ and } \\
     y(I_j, J) = \max\limits_{\ell=1,\dots, N} y(I_\ell, J). \end{cases} 
\end{equation}
(If $S(I_\ell, J) = \emptyset$ for some $1\leq \ell\leq N$ then $e(\Ical; J)$ is undefined.)

For two sequences $\Ical = (I_1, \dots, I_N), \Jcal = (J_1, \dots, J_M)$ of subsets of  $[n]$ we now define the directed graph $H(\Ical; \Jcal )$ by setting 
\begin{align}
    V\left( H(\Ical; \Jcal ) \right) & \ :=\  \{I_1,\dots, I_N\},  \\
    E\left(H(\Ical; \Jcal )\right) & \ :=\  \{ e(\Ical; J_\ell ) : \ell=1,\dots, M\}.
\end{align}
\noindent
(If $e(\Ical; J_\ell )$ is undefined for some $J_\ell$, then $J_\ell$ simply does not contribute to 
$H(\Ical; \Jcal)$. In particular $H(\Ical; \Jcal)$ is the empty graph on $N$ vertices if all $e(\Ical; J_\ell )$ are undefined.) Note that it is entirely possible that $e(\Ical; J_\ell )$ and $e(\Ical; J_{\ell'} )$ 
code the same
arc for some $\ell\neq\ell'$. This will not be an issue. 

For $A \subseteq [n]$ we define
\begin{align} 
    W_k(A) & := \{ i : \{\Pi_n(i), \Pi_n(i ) + 1, \dots, \Pi_n(i)+k-1\} \subseteq \Pi_n[A] \}, \\
    w_k(A) \label{eq:def_wk}  & := |W_k(A)|.
\end{align}
We can express that $i$ is an element of $W_k(J)$ for an interval $J$ in the following manner: For $i,j$ we can determine whether $\Pi_n(i)=\Pi_n(j)$ by the formula $ i=j$, as $\Pi_n$ is a permutation. For $i,j$ we can determine whether or not $\Pi_n(j) = \Pi_n(i)+1$ by saying that $j$ is the successor of $i$ under $<_2$. Then $i\in W_k(J)$ for an interval $J = \{a,\ldots, b\}$ if and only if $i\in J$ and there is no $c$ satisfying both $c\notin J$ and $\succ_2^{(j)}(i,c)$ 
for some $1\leq j\leq k-1$, where $\succ^{(j)}_2$ is as defined in Section \ref{subsec:TOOB_TOTO}. This may be expressed in $\TOTO$ as
\begin{equation}\label{eq:formula_wk}
    \neg \exists c \left(  \left((c <_1 a) \vee (b <_1 c)\right) \wedge \bigvee_{j=1}^{k-1} \succ_2^{(j)}(i, c)\right),
\end{equation}
Then we can also express for instance that $w_k(J) \neq 0$. 

For $J \subseteq [n]$ an interval with $W_k(J) = \{i_1,\dots, i_L \}$, where $i_1 < \dots < i_L$, the sequence
$\Ical_k(J)$ of the {\em minimal intervals} between points of $W_k(J)$ is defined as:

$$ \Ical_k(J) := ( \{i_1+1,\dots, i_2-1\}, \dots, \{i_{L-1}+1,\dots, i_L-1\} ). $$
(If $|W_k(J)| \leq 1$ then $\Ical_k(J)$ is the ``empty sequence''.) We will sometimes abuse notation and consider $\cI_k$ to be a set instead of a sequence.

The induced graph structures that we will consider will be of the form
\begin{equation}
    H(\cI_k(I);\cI_k(J))\qquad\text{ or } \qquad H(  (\cI_k(I_1),\cI_k(I_2));\cI_k(J)),
\end{equation}
where $I,I_1,I_2$ and $J$ are intervals and $(\cI_k(I_1),\cI_k(I_2))$ denotes the concatenation of $\cI_k(I_1)$ and $\cI_k(I_2)$. In Figure~\ref{fig:cherry} we give an example such that the induced graph structure is a directed cherry.

\begin{figure}
\centering
\includegraphics[scale=0.7]{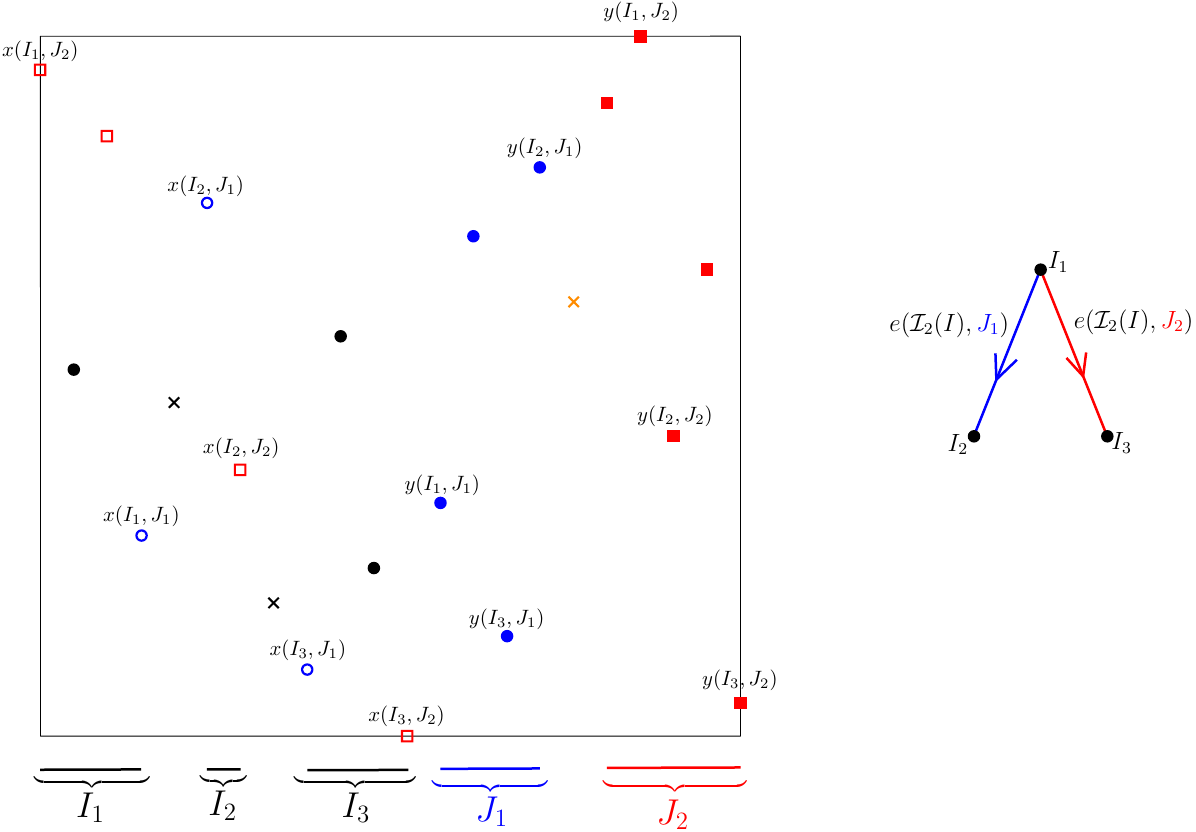}
\caption{In the figure we give an example of a permutation $\pi$ and intervals $I$ and $J$ such that $H(\cI_2(I);\cI_2(J))$ is a directed cherry. 
Take $n=22$, $I=\{1, \ldots, 12\}$, $J=\{13, \ldots, 22\}$ and permutation $\pi=21,12,19,7,11,17,9,5,3,13,6,1,8,16,4,18,14,20,22,10,15,2$. 
One may check that $\cI_2(I)=\{I_1, I_2, I_3\}$ and that $\cI_2(J)=\{J_1, J_2\}$ as shown. 
For each $i$ the point $(i, \pi(i))$ is plotted with a blue dot if $i \in J_1$, a blue circle if $i \in I$ and $\pi(i)+1 \in \pi(J_1)$, a filled red square if $i \in J_2$, an empty red square if $i \in I$ and $\pi(i)+1 \in \pi(J_2)$, a black cross if $i \in W_2(I)$ and an orange cross if $i \in W_2(J)$. 
}
\label{fig:cherry}
\end{figure}

As noted, the vertices of these graphs are elements of $\cI_k(I)$ and $(\cI_k(I_1),\cI_k(I_2))$, respectively. 
Now, these intervals are of the form $\{i_k+1,\ldots, i_{k+1}-1\}$ where $i_k$ and $i_{k+1}$ are both elements 
of $W_k(I')$ for $I'$ equal to one of $I,I_1$ or $I_2$. Membership in $W_k(I')$ can be determined via \eqref{eq:formula_wk}. 
So an interval $\{a,\ldots, b\}$ is an element 
of for instance $\cI_k(I)$ if and only if $a-1$ and $b+1$ are two consecutive elements of $W_k(I)$. Then if we want to check whether $I'\in \cI_k(I)$ and $I'' \in \cI_k(I)$ are connected by an arc, we simply need to check whether there is an interval $J'\in \cI_k(J)$ for which $y(I',J') = \min_{\ell=1,\ldots, |\cI_k(I)|}y(I_\ell,J')$ and $y(I'',J') = \max_{\ell=1,\ldots, |\cI_k(I)|}y(I_\ell,J')$. The elements $y(I_\ell,J')$ can be determined in $\TOTO$ as we have already seen, and we can compare $y(I_\ell,J')$ and $y(I_{\ell'},J')$ by the total ordering $<_1$.

We need one more ingredient in order to describe the final sentence $\phi$ whose probability does not converge 
in Proposition~\ref{prop:oscilate}. Given two disjoint sets $A,B$ we say that a directed graph $G$ with vertex set $A\uplus B$ is a \emph{directed matching from $A$ to $B$} if every arc of $G$ connects a vertex in $A$ to a vertex in $B$. 
Given disjoint intervals $I,I'$ we associate to them the intervals $\cI_k(I)=(I_1,\ldots,I_N)$, respectively $\cI_k(I')=(I'_1,\ldots, I'_{N'})$. We will need to determine if $N > N'$. We construct a formula $\an{Bigger}(I,I')$ satisfied with high probability by two intervals $I,I'$ whenever $N' < \log\log n <  N < 2\log\log n$. This formula may be written in $\TOTO$ as $\exists a,b$ such that, writing $J\be (a,b)$, the following holds in $H((\cI_k(I),\cI_k(I'));\cI_k(J))$:

\begin{itemize}
    \item All vertices in $\cI_k(I')$ have outdegree $1$;
    \item All vertices in $\cI_k(I)$ have indegree at most $1$;
    \item There is a vertex in $\cI_k(I)$ with indegree $0$;
    \item All arcs start at a vertex in $\cI_k(I')$ and end at a vertex in $\cI_k(I)$.
\end{itemize}

This directed graph is then necessarily a directed matching from $\cI_k(I')$ to $\cI_k(I)$ that saturates the vertices in $\cI_k(I')$ but not those in $\cI_k(I)$. Such a matching exists if and only if $|\cI_k(I)| > |\cI_k(I')|$.

The following two lemmas represent the ``meat'' of the proof of Proposition~\ref{prop:oscilate}. 
They will be proved in Section \ref{sec:prob_uniform}.

\begin{lem}\label{lem:graph_exists}
    For every fixed $k\geq 3$, with probability $1 - e^{-n^{\Omega(1)}}$ there is an interval $I$ with $N\be |\cI_k(I)|$ satisfying $\log \log n < N  < 2 \log\log n$ such that for every directed graph $G$ with vertex 
    set $\cI_k(I)$ there exists an interval $J \subseteq [n]$ such that 
    $$ H( \Ical_k(I); \Ical_k(J) ) = G,$$
    and moreover $w_{k+1}(I') = 0$ and $w_{k-1}(I') > 0$ for all $I'\in \Ical_k(I)$.
\end{lem}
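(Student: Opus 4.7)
The plan is to locate a suitable $I$ first, and then, using that $I$, to exhibit a $J$ for every directed graph $G$; throughout we aim for failure probability $e^{-n^{\Omega(1)}}$ so that many union bounds are affordable. Fix the target length $m \be \lceil c_k\, n^{1-1/k}(\log\log n)^{1/k} \rceil$ with $c_k$ tuned so that, for $\Pi_n \sim \Mallows(n,1)$ and any interval $A \subseteq [n]$ of length $m$, the linearity-of-expectation computation $\bE\, w_j(A) = \Theta(|A|^{j}/n^{j-1})$ gives $\bE\, w_k(A) \asymp \log\log n$ and $\bE\, w_{k+1}(A) = o(1)$. Partition a prefix of $[n]$ into $t \be \lfloor n^{1/(2k)} \rfloor$ pairwise-disjoint candidate intervals $A_1,\ldots, A_t$ of length $m$. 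A second-moment computation followed by Chebyshev's inequality shows that each candidate independently satisfies $\log\log n < w_k(A_i) < 2\log\log n$ with probability $\Omega(1)$; combining this with Markov's bound to force $w_{k+1}(A_i) = 0$ and a routine lower bound on the probability that every minimal interval $A' \in \cI_k(A_i)$ contains at least one $(k-1)$-consecutive block, an approximate-independence argument across the pairwise-disjoint candidates yields that with probability $1 - e^{-n^{\Omega(1)}}$ at least one candidate $A_i$ qualifies. Let $I$ be such an $A_i$.

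With $I$ chosen, write $\cI_k(I) = (I_1,\ldots,I_N)$ where $N = \Theta(\log\log n)$. Since there are only $2^{N(N-1)} = n^{o(1)}$ directed graphs on $N$ vertices, it suffices to fix $G$ and produce a witnessing $J$ with probability $1 - e^{-n^{\Omega(1)}}$, then union-bound over all $G$. For each ordered pair $(I_i,I_j)$ we call a short interval $J^\star \subseteq [n]\setminus I$ an \emph{$(i,j)$-arc producer} if every $I_\ell$ has a point whose image under $\Pi_n$ is immediately below some element of $\Pi_n(J^\star)$, and moreover the minimum and maximum of $y(I_\ell,J^\star)$ over $\ell$ are attained at $\ell=i$ and $\ell=j$ respectively. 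A direct probabilistic estimate --- using uniformity of $\Pi_n$ on regions disjoint from $I$ --- shows that some large region $R \subseteq [n]\setminus I$ contains at least $n^{\Omega(1)}$ disjoint $(i,j)$-arc producers for every ordered pair. The plan is then to construct $J$ by picking its endpoints at suitably chosen elements of $W_k(R)$, so that $\cI_k(J)$ is precisely a prechosen sequence of arc producers --- one for each edge of $G$ --- with every other window of $R$ absorbed into $W_k(J)$ by spacers, yielding $H(\cI_k(I),\cI_k(J)) = G$.

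The principal obstacle is the \emph{exactness} of $H(\cI_k(I),\cI_k(J)) = G$: every edge of $G$ must be realised, but no minimal interval of $J$ may produce a spurious arc outside $E(G)$. The former is relatively soft given an abundance of arc producers per pair; the latter forces the boundaries of the minimal intervals of $J$ to fall at exactly the selected producers, which requires careful design of $J$ together with a structural analysis of $W_k(\cdot)$ on the region $R$. Suppressing spurious arcs is the technical heart of the proof in Section~\ref{sec:prob_uniform}.
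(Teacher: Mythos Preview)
Your plan for locating $I$ is broadly in line with the paper's, though the paper takes $|I| \asymp n^{1-1/k}(\log\log n)^{3/k}$ so that $w_k(I) \asymp (\log\log n)^3$, and then \emph{shrinks} $I$ one element at a time (Lemma~\ref{lem:shrink_CkI}) until $|\cI_k(I)|$ lands in $(\log\log n,\,2\log\log n)$. This sidesteps the need to tune $c_k$ so precisely that $w_k(A_i)$ falls in a window of width $\log\log n$ around $\log\log n$; your version can be made to work but is more delicate.

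The real gap is in the $J$-part. You propose to choose the endpoints of $J$ among elements of $W_k(R)$ so that $\cI_k(J)$ becomes a prescribed list of arc producers, with ``every other window of $R$ absorbed into $W_k(J)$ by spacers''. But $W_k(J)$ is not controlled by the endpoints of $J$ alone: whether $i\in W_k(J)$ depends on whether all of $\Pi_n^{-1}(\Pi_n(i)),\ldots,\Pi_n^{-1}(\Pi_n(i)+k-1)$ lie in $J$, which is dictated by $\Pi_n$. Shrinking $R$ to $J$ will typically \emph{delete} elements of $W_k(R)$ in an uncontrolled pattern, and there is no mechanism to ``absorb'' intervening stretches into $W_k(J)$ by adding spacers, since $J$ must be a single interval and you cannot force prescribed $k$-runs into $\Pi_n(J)$. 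So forcing $\cI_k(J)$ to equal a prechosen sequence is not achievable by a choice of two endpoints.

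The paper avoids this entirely. It never engineers $J$. Instead (Lemma~\ref{lem:exists_J}) it lays down $M = n^{\Omega(1)}$ disjoint candidate intervals $J_1,\ldots,J_M$ of length $\asymp n^{1-1/k}(\log\log n)^{3/k}$, so that each $\cI_k(J_\ell)$ has $L_\ell \asymp (\log\log n)^3$ minimal intervals. Conditioning on the images (the event $F_{\tau,\underline A,\underline B}$), the permutation restricted to each $A_{\ell j}$ is a uniform bijection, so the points $y(I_1,A_{\ell j}),\ldots,y(I_N,A_{\ell j})$ are uniform without replacement and hence $e(\cI,A_{\ell j})$ is uniform over all $N(N-1)$ ordered pairs, independently across $j$. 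The spurious-arc problem is then solved not structurally but \emph{probabilistically}: one fixes a sequence $e_1,\ldots,e_{L_\ell}$ whose underlying set is exactly $E(G)$, and asks for the event $\{e(\cI,A_{\ell j})=e_j\text{ for all }j\}$, which forces both coverage of $E(G)$ and no extraneous arcs. This event has probability $(1/N(N-1))^{L_\ell}\ge (1/N^2)^{(1+\eps)(\log\log n)^3}=n^{-o(1)}$, so among $n^{\Omega(1)}$ independent candidates one succeeds with probability $1-e^{-n^{\Omega(1)}}$. That is the missing idea: trade the impossible task of sculpting $\cI_k(J)$ for a many-trials argument where each trial has only subpolynomially small success probability.
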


\begin{lem}\label{lem:matching}
For $k$ sufficiently large but not depending on $n$, with probability $1-O(n^{-998})$, for every two disjoint intervals $I_1,I_2 \subseteq [n]$ satisfying 
\begin{itemize}
    \item $0<|\Ical_k(I_1)|  < \log\log n < |\Ical_k(I_2)| < 2 \log\log n$;
    \item $w_{k+1}(I_1)= w_{k+1}(I_2)=0$;
    \item $w_{k-1}(I') > 0$ for all $I\in \cI_k(I_1)\cup \cI_k(I_2)$,    
\end{itemize}
there exists an interval $J \subseteq [n]$ such that $H( (\Ical_k(I_1),\Ical_k(I_2));\FS{\Ical_2(} J) )$ is a directed matching between $\Ical_k(I_1)$ and $\Ical_k(I_2)$ that saturates $\Ical_k(I_1)$.
\end{lem}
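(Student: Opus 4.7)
The proof extends the strategy of Lemma~\ref{lem:graph_exists}, with two additional ingredients: a union bound handling the universal quantifier over pairs $(I_1, I_2)$, and a construction that yields a \emph{matching} saturating $\cI_k(I_1)$ rather than an arbitrary graph on a single vertex set. Since there are at most $n^4$ ordered pairs of disjoint intervals $(I_1, I_2) \subseteq [n]$, it suffices to prove that, for each fixed such pair, with probability at least $1 - O(n^{-1002})$ either one of the stated hypotheses fails or a suitable $J$ exists. A union bound over the pairs then gives the claimed $O(n^{-998})$ bound.

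Fix such a pair and condition on the hypotheses holding. Write $\cI_k(I_1) = (I^{(1)}_1, \ldots, I^{(1)}_{N_1})$ and $\cI_k(I_2) = (I^{(2)}_1, \ldots, I^{(2)}_{N_2})$, with $N_1 < \log \log n < N_2 < 2 \log \log n$. It is enough to realise one specific saturating matching, say $I^{(1)}_i \mapsto I^{(2)}_i$ for $i \in [N_1]$. The plan is to partition some large sub-interval of $[n] \setminus (I_1 \cup I_2)$ into $t = \lfloor n^{1/2} \rfloor$ disjoint candidate intervals $J^{(1)}, \ldots, J^{(t)}$ of length $\Theta(n^{1/2})$, and for each $J^{(r)}$ consider the event that $\cI_k(J^{(r)})$ yields precisely the $N_1$ arcs of the target matching. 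We would show that each candidate has success probability at least $p_0 = n^{-o(1)}$; then, since the candidates sit in disjoint coordinate and image ranges, the mild dependencies can be handled by Theorem~\ref{thm:generalized_chernoff} (or by a direct conditional independence argument) to conclude that at least one candidate succeeds, with failure probability at most $e^{-n^{\Omega(1)}} \ll n^{-1002}$.

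The main technical obstacle is establishing the per-candidate bound $p_0 \geq n^{-o(1)}$. For a fixed candidate $J = J^{(r)}$, the minimal intervals $J_\ell \in \cI_k(J)$ and the statistics $y(I^{(s)}_i, J_\ell)$ depend jointly on $\Pi_n$ restricted to $I_1$, $I_2$ and $J$. I would handle this by conditioning sequentially on the image sets $\Pi_n(I_1), \Pi_n(I_2), \Pi_n(J)$ and then on the positions of $W_k(I_1), W_k(I_2), W_k(J)$; after this conditioning, $\Pi_n$ on $J$ acts as a uniform bijection onto a known image set, and the hypotheses $w_{k+1}(I_j) = 0$ and $w_{k-1}(I') > 0$ guarantee that the minimal intervals are non-degenerate and that their image sets leave enough freedom. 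One can then compute the probability that the $\ell$-th minimal interval of $\cI_k(J)$ produces exactly the $\ell$-th edge of the target matching: this requires the image of some point in $I^{(1)}_\ell$ to be the $<_2$-predecessor of the image of some point in $I^{(2)}_\ell$ while no similar min/max is attained elsewhere, an event whose probability is bounded below by $(N_2)^{-O(N_1)} = n^{-o(1)}$ via arguments in the spirit of Lemma~\ref{lem::binom_max}. Multiplying over the $N_1$ minimal intervals keeps the overall per-candidate success probability at $n^{-o(1)}$, which suffices.
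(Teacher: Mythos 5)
Your proposal goes a different, much longer route than the paper: the paper's proof is essentially a two-line reduction to Lemma~\ref{lem:exists_J}, whereas you attempt to rebuild the probabilistic candidate-interval machinery from scratch. The paper first observes that the hypothesis $w_{k-1}(I')>0$ for all $I'\in\cI_k(I_1)\cup\cI_k(I_2)$ together with Lemma~\ref{lem:zero_w} forces $|I'|\ge n^{1-1000/(k-1)}$ for all such $I'$ (with probability $1-O(n^{-998})$, which is where the advertised error term comes from), while $w_{k+1}(I_j)=0$ together with Lemma~\ref{lem:positive_w} forces $|I'|\le n^{1-\frac{1}{2(k+1)}}$. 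For $k$ large these bounds place every $I'$ in the window $[n^{1/2+\eps},n^{1-\eps}]$ required by Lemma~\ref{lem:exists_J}, and since the total number of minimal intervals is at most $3\log\log n<10\log\log n$, Lemma~\ref{lem:exists_J} (already phrased with a universal quantifier over all admissible sequences of intervals and all directed graphs) delivers the desired $J$ directly. No separate union bound over pairs $(I_1,I_2)$ is needed.

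Your proposal has two concrete gaps. First, your candidate intervals $J^{(r)}$ have length $\Theta(n^{1/2})$, but for these to be useful you need $w_k(J^{(r)})$ to be of order $(\log\log n)^3$, whereas Lemma~\ref{lem:vkexp} gives $\Ee\, w_k(J^{(r)})=\Theta(n^{1-k}|J^{(r)}|^k)=\Theta(n^{1-k/2})$, which tends to zero for $k\ge3$ (and the lemma is stated ``for $k$ sufficiently large''); so $\cI_k(J^{(r)})$ is empty with high probability and the per-candidate success probability is not $n^{-o(1)}$ but essentially zero. The correct candidate length, used in Lemma~\ref{lem:exists_J}, is $\Theta\bigl(n^{1-1/k}(\log\log n)^{3/k}\bigr)$. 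Second, you treat the hypotheses $w_{k+1}(I_j)=0$ and $w_{k-1}(I')>0$ as merely ensuring ``non-degeneracy'' and ``enough freedom'' without ever extracting the quantitative size bounds on $|I'|$ that the argument actually needs (and which are where the $O(n^{-998})$ error term really arises). Without those size bounds you cannot apply Lemma~\ref{lem:S} or its analogue to guarantee $S(I',J')\neq\emptyset$, so the ``good index'' analysis you gesture at does not go through. The remedy for both gaps is exactly to notice that the hypotheses plus Lemmas~\ref{lem:zero_w} and~\ref{lem:positive_w} put the $I'$ into the range where Lemma~\ref{lem:exists_J} applies, after which no further probabilistic work is needed.
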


\subsection{Proof of Proposition~\ref{prop:oscilate} assuming Lemmas~\ref{lem:graph_exists} and~\ref{lem:matching}}

Given an interval $I$ we have seen in the previous section how to define a 
directed graph on $\cI_k(I) = (I_1,\ldots, I_N)$ by selecting an interval $J$. Note that the $I_i$ are disjoint intervals, so 
we inherit a total ordering on them from the 
 relation $<_1$ on the domain of $\Pi_n$. This allows us  to distinguish for instance $I_1,I_2$ and $I_N$ in $\TOTO$.

We will apply the ideas developed in Section \ref{sec:arithSO} to sets of the form $\cI_k(I)$. 
Such sets are subsets of the collection of all intervals in $[n]$, and (given $I$) we can determine membership 
of an interval $\{a,\ldots, b\}$ in $\cI_k(I)$ by a $\TOTO$ formula as described in the previous section. 
Any four intervals $J_D,J_E,J_T,J_W$ induce four directed 
graphs on $\cI_k(I)$, and we may check whether these graphs satisfy $\an{Arith}$ as given in Section \ref{sec:arithSO}. If so, then $\an{EvenSize}(J_D,J_E,J_T,J_W)$ holds if and only if $\log^{**}\log^{**}|\cI_k(I)|$ is even. 
Lemma \ref{lem:graph_exists} will allow us to deduce that with high probability there are 
$J_D,J_E,J_T,J_W$ satisfying $\an{Arith}$ so that we can then check the parity of $\log^{**}\log^{**}|\cI_k(I)|$. 

We now describe the sentence $\phi$ appearing in Proposition~\ref{prop:oscilate}. It will be of the form
\begin{equation}\label{eq:non_con_phi}
    \phi =\exists x_1,y_1,\ldots,x_5,y_5 \left(\phi_0 \wedge   \forall x_6,y_6,\ldots,  x_{10},y_{10}  (\phi_1\to \phi_2)   \right).
\end{equation}
We use the ten pairs of elements to define intervals 
$I,J_D,J_E,J_T,J_W,I',J_D',J_E',J_T',J_W'$ by $I = (x_1,y_1),\ldots, J_W' = (x_{10},y_{10})$. We define $\phi_{0}(x_1,y_1,\ldots, x_5,y_5)$ as the $\TOTO$ formula such that $(\Pi_n,i_1,j_1,\ldots, i_5,j_5)\models \phi_0$ if and only if
\begin{itemize}
    \item $w_{k+1}(I) =0$ and $w_{k-1}(\tilde{I}) > 0$ for all $\tilde{I}\in \cI_k(I)$, and;
  \item $\an{Arith}$ is satisfied wrt.~the set $S := \Ical_k(I)$
  and the relations $R_D := \Hcal( \Ical_k(I); \Ical_k(J_D) ), R_E := \Hcal( \Ical_k(I); \Ical_k(J_E) ), 
  R_T := \Hcal( \Ical_k(I); \Ical_k(J_T) ), R_W := \Hcal( \Ical_k(I); \Ical_k(J_W) )$, and;
    \item $\an{EvenSize}$ is satisfied wrt.~the same set and relations.
\end{itemize}
We define the formula $\phi_{1}(x_6,y_6,\ldots, x_{10},y_{10})$ that holds if and only if 
the first two conditions above are satisfied wrt~$I',J_D',J_E',J_T',J_W'$, but 
$\an{EvenSize}$ is {\em not} satisfied wrt.~$I',J_D',J_E',J_T',J_W'$. 
Finally, we let $\phi_2 (x_1,y_1,x_6,y_6)$ express that $\an{Bigger}$ holds wrt.~$\Ical_k(I),\Ical_k(I')$.

To recap, in informal terms the formula $\varphi$ asks for an interval $I$ such that 
(some intervals/relations certify that) $\log^{**}\log^{**} |\Ical_k(I)|$ is even, and moreover there is no
interval $I'$ such that (some intervals/relations certify that) both $\log^{**}\log^{**} |\Ical_k(I')|$ is odd
and $|\Ical_k(I)| < |\Ical_k(I')|$.

\begin{proof}[Proof of Proposition~\ref{prop:oscilate}]
    We pick an $n$ satisfying $W^{(2)}(\log^{**}\log^{**}n-1)<{\log\log n}$. \\
    By $W^{(2)}(\log^{**}\log^{**}n-1)<{\log\log n}$ and Lemma~\ref{lem:ineq_w_log_star} we have
    \begin{equation}
         \log^{**}\log^{**}n - 1 < \log^{**}\log^{**} \ceil{\log \log n} \leq \log^{**}\log^{**}n.
    \end{equation}
    As the three quantities above are all integers, the second inequality is in fact an equality. 
    Thus, for any integer $m$ satisfying $\log\log n \leq m\leq n$ we have $\log^{**}\log^{**}m=\log^{**}\log^{**}n$. 

    We let $k$ be such that the conclusions of Lemmas~\ref{lem:graph_exists} and~\ref{lem:matching} hold.

    Consider first the case $\log^{**}\log^{**}n$ even. By Lemma~\ref{lem:graph_exists} and $\log^{**}\log^{**} m$ being even for all $\log\log n \leq m\leq n$, with probability $1-e ^{-n^{\Omega(1)}}$ there exist intervals $I,J_D,J_E,J_T$ and $J_W$ such that $\phi_{0}(I,J_D,J_E,J_T,J_W)$ holds and such that $\log\log n < |\cI_k(I)| < 2\log\log n$. Now let $I',J'_D,J'_E,J'_T$ and $J'_W$ be five intervals such that $\phi_{1}(I',J'_D,J'_E,J'_T,J'_W)$ holds (if such intervals do not exist then $\Pi_n\models \phi$ and we are done). Then $N' \be | \cI_k(I')|$ must be smaller than ${\log \log n}$, as otherwise $\log^{**}\log ^{**} N'$ would be even. Thus indeed $N' <  \log \log n < N < 2\log\log n$. By Lemma~\ref{lem:matching}, 
    with probability $1-O(n^{-998})$ there exists an interval $J$ such that $H((  \cI_k(I), \cI_k(I')); J)$  is a directed matching between $\cI_k(I)$ and $\cI_k(I')$ that saturates the vertices of $\cI_k(I')$ but not the vertices of $\cI_k(I)$. That is, $\an{Bigger}(I,I')$ holds with at least this probability. So if $n$ is even $\bP \left(   \Pi_n \models \phi \right) = 1- O (n^{-998})$.

    We now consider the case $\log^{**}\log^{**}n$ odd. Assume that there are intervals $I,J_D,J_E,J_T,J_W$ such that $\phi_{0}(I,J_D,J_E,J_T,J_W)$ holds, otherwise we immediately have $\Pi_n \not \models \phi$. We must then have $|\cI_k(I)| < {\log \log n}$. As before, with probability $1-e^{-n^{\Omega(1)}}$ there are $I'$, $J'_D$, $J'_E$, $J'_T$ and $J'_W$ such that $\phi_{1}(I',J'_D,J'_E,J'_T,J'_W)$ holds and $ \log\log n <  |\cI_k(I')|  < 2 \log \log n$. But then $|\cI_k(I')| > |\cI_k(I)|$, so $\an{Bigger}(I,I')$ cannot be satisfied. 
\end{proof}

\subsection{The probabilistic component\label{sec:prob_uniform}}

\noindent
For $B \subseteq [n]$ and $\tau : B \to [n]$ an injection, we will write
\begin{align}
    \bP_\tau( . ) &\be \bP( . \,|\, \Pi_n(i) = \tau(i) \text{ for all $i\in B$} ),\\
    \Ee_\tau( . ) &\be \Ee( . \,|\, \Pi_n(i) = \tau(i) \text{ for all $i\in B$} ). 
\end{align}
Recall that we defined 
\begin{align} 
    w_k(A) & := |\{ i : \{\Pi_n(i), \Pi_n(i ) + 1, \dots, \Pi_n(i)+k-1\} \subseteq \Pi_n(A) \}|.
\end{align}
We define 
\begin{equation}\label{eq:def_Y_j}
    Y_i(A) \be \ind{ \{ \{ i,\ldots, i+k-1 \} \,\subseteq\, \Pi_n(A) \} }.
\end{equation}
We note that 

$$w_k(A) = Y_1(A)+\ldots + Y_{n-k+1}(A).$$

\begin{lem}\label{lem:vkexp}
If $k$ is fixed, and $A, B \subseteq [n]$ are disjoint sets of cardinality $1 \ll |A| \ll n$ and $|B| \,\leq\, n / \log n$, and 
$\tau : B \to [n]$ is an arbitrary injection, then 

$$ 
  \begin{array}{l} 
   \Ee_\tau w_k(A) = (1+o(1)) n^{1-k} |A|^k, \\
   \\
   \Ee_\tau w_k(A)^2 \leq 
   \left(\Ee_\tau w_k(A) \right)^2
   + (2k-1) \cdot \Ee_\tau w_k(A).
   \end{array}
$$

\noindent
(where the $o(.)$ is uniform over all such $B$ and $\tau$.)
\end{lem}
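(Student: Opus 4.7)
\vspace{10pt}\noindent\textbf{Proof plan.} The plan is to exploit that, conditional on $\Pi_n(i)=\tau(i)$ for $i\in B$, the restriction $\Pi_n\upharpoonright [n]\setminus B$ is a uniform bijection onto $[n]\setminus \tau(B)$. Writing $w_k(A)=\sum_{i=1}^{n-k+1} Y_i(A)$ with $Y_i(A)$ as in \eqref{eq:def_Y_j}, I would compute $\bP_\tau(Y_i(A)=1)$ exactly by direct counting: the probability is $0$ if $\{i,\dots,i+k-1\}\cap \tau(B)\neq\emptyset$, and otherwise equals $(|A|)_k/(n-|B|)_k$, where $(x)_k\be x(x-1)\cdots(x-k+1)$, since one is counting ordered $k$-tuples of distinct elements of $A$ mapped to $(i,\dots,i+k-1)$ inside a uniform bijection from $[n]\setminus B$ to $[n]\setminus \tau(B)$. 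Summing over $i\in\{1,\dots,n-k+1\}$ and noting that at most $k|B|$ starting points are forbidden gives
\[
\Ee_\tau w_k(A)=\bigl(n+O(k|B|)\bigr)\cdot\frac{(|A|)_k}{(n-|B|)_k},
\]
and since $k$ is fixed, $|A|\to\infty$, and $|B|/n\leq 1/\log n\to 0$, both $(|A|)_k=|A|^k(1+o(1))$ and $(n-|B|)_k=n^k(1+o(1))$, yielding the first moment asymptotic $\Ee_\tau w_k(A)=(1+o(1))\,n^{1-k}|A|^k$ uniformly in $B,\tau$.

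For the second moment I would expand $\Ee_\tau w_k(A)^2=\sum_{i,j}\bP_\tau(Y_i(A)Y_j(A)=1)$ and split by the gap $|i-j|$ into three regimes: diagonal ($i=j$), disjoint windows ($|i-j|\geq k$), and overlapping ($1\leq|i-j|\leq k-1$). In the disjoint regime the same counting gives $\bP_\tau(Y_iY_j=1)=(|A|)_{2k}/(n-|B|)_{2k}$ on the event that both windows avoid $\tau(B)$, and $0$ otherwise. Using the factorizations $(|A|)_{2k}=(|A|)_k(|A|-k)_k$ and $(n-|B|)_{2k}=(n-|B|)_k(n-|B|-k)_k$, the ratio of $\bP_\tau(Y_iY_j=1)$ to $\bP_\tau(Y_i)\,\bP_\tau(Y_j)$ becomes
\[
\prod_{\ell=0}^{k-1}\frac{(|A|-k-\ell)(n-|B|-\ell)}{(|A|-\ell)(n-|B|-k-\ell)},
\]
each factor of which is at most $1$ because its numerator minus denominator equals $k\bigl(|A|+|B|-n\bigr)\leq 0$. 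Hence the disjoint contribution to $\Ee_\tau w_k^2$ is bounded by the corresponding contribution to $(\Ee_\tau w_k)^2$.

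It then remains to bound the diagonal and overlapping contributions to $\Ee_\tau w_k^2-(\Ee_\tau w_k)^2$. The diagonal gives at most $\sum_i\bP_\tau Y_i=\Ee_\tau w_k(A)$. For each $i$ there are at most $2(k-1)$ choices of $j$ with $1\leq|i-j|\leq k-1$, and $\bP_\tau(Y_iY_j=1)\leq\bP_\tau Y_i$ trivially, so the overlapping part contributes at most $2(k-1)\Ee_\tau w_k(A)$. Adding the three regimes yields $\Ee_\tau w_k(A)^2\leq(\Ee_\tau w_k(A))^2+(2k-1)\Ee_\tau w_k(A)$, as required. Uniformity in $B$ and $\tau$ is automatic since every estimate depends only on $k$ and $|B|$; the only mildly delicate step is the factor-by-factor comparison in the disjoint case, which reduces to the one-line identity above and uses only $|A|+|B|\leq n$.
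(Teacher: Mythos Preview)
Your proposal is correct and follows essentially the same approach as the paper: direct counting of $\bP_\tau(Y_i=1)$ to get the first moment, and for the second moment the same split into far/close index pairs together with the key inequality $(|A|)_{2k}/(n-|B|)_{2k}\leq\bigl((|A|)_k/(n-|B|)_k\bigr)^2$. Your factor-by-factor verification of this inequality via the identity $(\text{numerator}-\text{denominator})=k(|A|+|B|-n)\leq 0$ is exactly the monotonicity the paper uses, and your bookkeeping for the close pairs (diagonal plus $2(k-1)$ overlaps giving the $(2k-1)$ constant) is slightly cleaner than the paper's.
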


\begin{proof}
Let us denote by $J$ the set of all $1\leq j\leq n-k+1$ for which $\{j, \dots, j+k-1\} \cap \text{Im}(\tau) = \emptyset$.
For notational convenience, we will write $Y_j$ instead of $Y_j(A)$.
We note that

$$ \Ee_\tau Y_j 
= \left\{\begin{array}{cl}
  \frac{|A|(|A|-1) \dots (|A|-k+1)}{(n-|B|)\dots(n-|B|-k+1)} & \text{ if $j\in J$,  } \\
   0 & \text{ otherwise.} 
  \end{array}\right..
$$ 

\noindent
Hence

$$ \Ee_\tau w_k(A) = |J| \cdot \frac{|A|(|A|-1) \dots (|A|-k+1)}{(n-|B|)\dots(n-|B|-k+1)}
= (1+o(1)) \cdot n^{1-k} |A|^k, $$

\noindent
using that $|B| \,\leq\, n / \log n$ and $(n-k(|B|+1)) \leq |J| \leq n$. The $o(1)$ term above is 
uniform for all $B$ by the uniform bound $B \,\leq\, n /\log n$. 

For the second moment, we remark that

\begin{equation}
 \Ee_\tau Y_iY_j  \label{eq:BEY}
= \left\{ \begin{array}{cl} 
           \frac{|A|(|A|-1) \dots (|A|-2k+1)}{(n-|B|)\dots(n-|B|-2k+1)} & \text{ if $i,j\in J$ and $|i-j|>k$, } \\
        0 & \text{ if }  i \not\in J \text{ or } j\not\in J , \\
         \leq \Ee_\tau Y_i & \text{ if } |i-j| \leq k.
         \end{array} \right.
\end{equation}

\noindent
and we also remark that

$$ \frac{|A|(|A|-1) \dots (|A|-2k+1)}{(n-|B|)\dots(n-|B|-2k+1)} \leq 
\left( \frac{|A|(|A|-1) \dots (|A|-k+1)}{(n-|B|)\dots(n-|B|-k+1)}  \right)^2, $$

\noindent
since $1 \ll |A|, |B| \ll n$ so that $(|A|-x)/(n-|B|-x) \leq (|A|-x-1)/(n-|B|-x-1)$ for all $x=1,\dots,2k-1$.

We see that

$$ \begin{array}{rcl} \Ee_\tau w_k(A)^2 
& = & \sum_i\sum_j \Ee_\tau Y_iY_j \\[4pt]
& \leq & |J|^2 \frac{|A|(|A|-1) \dots (|A|-2k+1)}{(n-|B|)\dots(n-|B|-2k+1)} + 
(2k-1) \sum_i \Ee_\tau Y_i \\[4pt]
& \leq & \left(\Ee_\tau w_k(A) \right)^2 + (2k-1)\cdot\Ee_\tau w_k(A).
\end{array} $$
\end{proof}

The following two lemmas establish that if $w_k(I)>0$ then $I$ is has to be relatively large and if $w_k(I)= 0$ then $I$ 
has to be relatively small. 

\begin{lem}\label{lem:zero_w}
    For each fixed $k\geq 2$, with probability $1-O(n^{-998})$, for every interval $I \subseteq [n]$ of  length at most $ n^{1-1000/k}$ we have $w_k(I) = 0$.
\end{lem}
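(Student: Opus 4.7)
My approach is a straightforward first-moment/Markov plus union bound, but the key observation is that $w_k(\cdot)$ is monotone under inclusion of intervals: if $I \subseteq I'$ then $w_k(I) \leq w_k(I')$. This lets me reduce to intervals of maximal allowed length, cutting the number of intervals I need to union-bound over from $\Theta(n^2)$ down to $O(n)$. That saving of a factor $n$ is exactly what turns the naive $O(n^{-997})$ bound into the required $O(n^{-998})$.

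First I would dispose of the case $k \leq 1000$: then $n^{1-1000/k} \leq 1$ for all $n$, so any interval $I$ of such length has $|I|<k$ and $w_k(I)=0$ trivially. So assume $k > 1000$ and set $\ell \be \lfloor n^{1-1000/k}\rfloor$, which tends to infinity with $n$. By monotonicity, it suffices to show that with probability $1-O(n^{-998})$ every interval $I' \subseteq [n]$ of length exactly $\ell$ satisfies $w_k(I')=0$, since any shorter interval violating the conclusion would be contained in such an $I'$.

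Next I would apply Lemma~\ref{lem:vkexp} with $A = I'$ and $B = \emptyset$ (so $\tau$ is vacuous) to get
\begin{equation}
    \Ee w_k(I') = (1+o(1)) n^{1-k} \ell^k \leq 2 n^{1-k} \cdot n^{k-1000} = 2 n^{-999}
\end{equation}
for all $n$ sufficiently large, uniformly over $I'$. Markov's inequality then yields $\bP(w_k(I') \geq 1) \leq 2 n^{-999}$, and a union bound over the at most $n$ intervals of length $\ell$ in $[n]$ gives total failure probability at most $2 n \cdot n^{-999} = O(n^{-998})$.

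There is no serious obstacle here; the only thing to be careful about is the reduction to maximal intervals via monotonicity before applying the union bound, since ranging over all intervals would cost an extra factor of $n$ and yield only $O(n^{-997})$.
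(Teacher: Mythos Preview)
Your proposal is correct and follows essentially the same approach as the paper: reduce by monotonicity to intervals of the maximal allowed length $\lfloor n^{1-1000/k}\rfloor$, apply Lemma~\ref{lem:vkexp} with $B=\emptyset$ and Markov's inequality to get $\bP(w_k(I')>0)=O(n^{-999})$, then union-bound over the $O(n)$ such intervals. Your explicit treatment of the trivial regime $k\leq 1000$ is a nice touch (the paper glosses over this, but Lemma~\ref{lem:vkexp} does require $1\ll |A|$, so the point is worth making).
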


\begin{proof} 
    It suffices to show the result for all intervals $I$ of length $\floor {n^{1-1000/k}}$ as $w_k(I) = 0$ implies 
    $w_k(I')=0$ for all $I'\subseteq I$. By Markov's inequality and Lemma~\ref{lem:vkexp} with $B=\emptyset$ we have 
    for any given interval $I\subseteq [n]$ with $|I| = \floor{  n^{1-1000/k}}$ that
    \begin{equation}
        \bP \left(  w_k(I) > 0 \right)  \leq \bE w_k(I) = (1+o(1)) n^{1-k}|I|^k \leq (1+o(1)) n^{1-k}n^{k -1000}=O \left( n^{-999} \right) .
    \end{equation}
    There are $O(n)$ intervals in $[n]$ of length $\floor{ n^{1-1000/k} }$, so the union bound gives the desired result.
\end{proof}

\begin{lem}\label{lem:positive_w}
    For each fixed $k$ and $0 < \eps < 1/k$, with probability at least $1 - e^{-\Omega(n^{1 - k \epsilon})}$, every 
    interval $I \subseteq [n]$ of length at least $ \ceil{n^{1-\eps}}$ satisfies $w_k(I) > 0$.
\end{lem}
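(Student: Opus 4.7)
My plan is to partition $[n]$ into $m := \lfloor n/k\rfloor$ disjoint consecutive blocks $B_1,\dots,B_m$ of length $k$ each and to show that with overwhelming probability at least one of these blocks lies entirely inside $\Pi_n(I)$, for every interval $I\subseteq[n]$ of length exactly $\ell := \lceil n^{1-\epsilon}\rceil$. Whenever $B_j \subseteq \Pi_n(I)$ the position $\Pi_n^{-1}(\min B_j)$ belongs to $W_k(I)$, so setting $N_I := \sum_{j=1}^{m} \ind{B_j \subseteq \Pi_n(I)}$ we have $w_k(I) \geq N_I$. Since $w_k$ is monotone in the containing interval and every interval of length at least $\ell$ contains one of length exactly $\ell$, it suffices to union bound the events $\{N_I=0\}$ over the at most $n$ such intervals.

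For a fixed $I$ the set $V := \Pi_n(I)$ is uniformly distributed on the $\ell$-subsets of $[n]$, and writing $p := \binom{\ell}{k}/\binom{n}{k}$ we have $p = (1+o(1))(\ell/n)^k$ and $\mu := \bE N_I = mp = \Omega(n^{1-k\epsilon})$ (using $\ell \geq n^{1-\epsilon}$ and $k\epsilon<1$). The concentration step will exploit negative association: the indicators $Z_i := \ind{i \in V}$ for $i\in[n]$ are $\{0,1\}$-valued with deterministic sum $\ell$ and an exchangeable joint law, a classical setting in which the $(Z_i)$ are negatively associated; each $X_j := \ind{B_j \subseteq V} = \prod_{i\in B_j} Z_i$ is a non-decreasing function of $(Z_i)_{i\in B_j}$, and the blocks are pairwise disjoint, so $X_1,\dots,X_m$ inherit negative association. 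A standard consequence of negative association for Bernoulli variables then gives
\begin{equation}
    \bP(N_I = 0) \,=\, \bE\!\left[\prod_{j=1}^m (1-X_j)\right] \,\leq\, \prod_{j=1}^m \bE[1-X_j] \,=\, (1-p)^m \,\leq\, e^{-pm} \,=\, e^{-\mu}.
\end{equation}
Combined with the union bound over the $O(n)$ intervals and the inequality $\log n \ll n^{1-k\epsilon}$ (since $1-k\epsilon > 0$), this delivers the claimed failure probability $e^{-\Omega(n^{1-k\epsilon})}$.

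The main obstacle is precisely this exponential concentration, because softer tools give strictly weaker rates: Chebyshev applied to the second moment bound in Lemma~\ref{lem:vkexp} only yields polynomial decay $O(n^{-(1-k\epsilon)})$, and the permutation version of McDiarmid's inequality (using that a transposition changes $w_k(I)$ by at most $2k$) yields only $\exp(-\Omega(n^{1-2k\epsilon}))$, a weaker exponent. Negative association succeeds because it collapses the joint failure event to a genuine product $(1-p)^m$, capturing the correct exponent $n^{1-k\epsilon}$ that matches the expected number of successful blocks.
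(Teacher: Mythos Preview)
Your argument is correct and shares the same core structure as the paper's proof: both reduce to a single interval of length exactly $\ell=\lceil n^{1-\eps}\rceil$, partition the image space into $\Theta(n/k)$ disjoint $k$-blocks, observe that each block falls inside $\Pi_n(I)$ with probability $p\sim(\ell/n)^k$, and then bound the probability that no block succeeds by essentially $(1-p)^{n/k}=e^{-\Omega(n^{1-k\eps})}$ before union-bounding over intervals.

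The difference lies in how the dependence between the block indicators is handled. The paper replaces the uniform $\ell$-subset $\Pi_n(I)$ by an i.i.d.\ Bernoulli set $J$ (so that the block indicators become genuinely independent), and then pays a factor of $(n+1)$ coming from $\bP(|J|=\ell)\ge 1/(n+1)$ to transfer back to the exact-size model; this factor is harmlessly absorbed into the exponential. You instead appeal to negative association of the membership indicators of a uniformly random $\ell$-subset, which lets you factor $\bE\prod_j(1-X_j)\le\prod_j(1-p)$ directly without any detour. Your route is slightly cleaner (no Poissonization step, and you correctly note that only $O(n)$ intervals of the minimal length need be considered), at the cost of invoking the NA machinery that is external to the paper; the paper's route is more self-contained, relying only on the elementary Lemma~\ref{lem::binom_max}. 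Both yield the same exponent.
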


\begin{proof}
    Since there are $O(n^2)$ possible intervals, it is enough to show that for any fixed interval $I$ of 
    length $\ceil{n^{1-\eps}}$ we have $\bP( w_k(I) = 0 ) \leq  e^{-\Omega(n^{1 - k \epsilon})}$.

    So let $I\subseteq [n]$ be an interval of length $|I|\geq \ceil{n^{1-\epsilon}}$. 
    Let $J\subseteq [n]$ be a random subset where we independently include $i$ in $J$ with probability
    \begin{equation}
        \bP \left(i \in J\right) = \frac{|I|}{n},\quad\text{for all }i\in [n].
    \end{equation}
    Denote by $E$ the event $\{|J| = |I|\}$. By Lemma~\ref{lem::binom_max} we have 
    $\bP \left( E \right) \geq 1/(n+1)$. Define 
    for $1\leq j \leq n-k + 1$ also the random variable
    \begin{align}
        X_j &\be \ind{\{ \{j,\ldots, j+k-1\} \, \subseteq\, J \}}.
    \end{align}
    If $E$ holds then $J\isd \Pi_n(I)$. That is, conditional on the event $E$ both $I$ and $J$ are uniformly 
    selected random subsets of $[n]$ of size exactly $\ceil{n^{1-\eps}}$. 
    We have 
    
    \begin{equation}\begin{array}{rcl}
        \bP \left( w_k(I) = 0 \right) & = & \bP \left( Y_j(I) = 0 \text{ for all } j \in [n-k+1]\right) =  
        \bP \left(  X_j = 0 \text{ for all } j\in [n-k+1] \Big|\, E\right) \\[2ex] & \leq & \displaystyle
        \frac{ \bP \left( X_j = 0 \text{ for all } j\in [n-k+1] \right)}{\bP \left( E \right) } 
         \leq  (n+1) \, \bP \left( X_j = 0 \text{ for all } j\in [n-k+1] \right).
       \end{array}
    \end{equation}
    
    Let $S = \{ ki\mid i \in \bN \} \cap [n-k+1]$. The $X_j$ are independent for $j\in S$. Therefore
    
    \begin{align}
         \bP \left( w_k(I) = 0 \right)  & \leq (n+1) \Pee( X_j = 0 \text{ for all } j \in S )
  = (n+1)  \left( 1 -  \frac{|I|^k}{n^k} \right) ^{|S|} \\ &\leq (n+1) \exp \left\{ -|S|\frac{|I|^k}{n^k} \right\} .
    \end{align}
    By $|S| =(1+o(1)) n/k$ and $|I|/n \geq n^{-\epsilon}$ we conclude that
    \begin{align}
        \bP \left( w_k(I) = 0 \right)  &\leq \exp \left\{ - (1+o(1))\frac{n^{1-k\epsilon}}{k} \right\}.
    \end{align}
\end{proof}

\begin{lem}\label{lem:vtexlem}
    For $k\geq 3$ and $\eps>0$, let $I \subseteq [n]$ be an interval of length 
    $\lfloor n^{1-1/k} \cdot \left(\log\log n\right)^{3/k} \rfloor$, and let 
    $B \subseteq [n]$ be disjoint from $I$ with $|B| \leq n / \log n$ and let $\tau : B \to [n]$ be an
    arbitrary injection. Let $E$ denote the event that 
    \begin{enumerate} 
        \item\label{itm:vtexlem1} $(1-\eps)\left(\log\log n\right)^{3} < w_k(I) < (1+\eps)\left(\log\log n\right)^{3}$, and;
        \item\label{itm:vtexlem2} $w_{k+1}(I) = 0$, and; 
        \item\label{itm:vtexlem3} $w_{k-1}(I') > 0$ for each $I' \in \Ical_k(I)$, and;
        \item\label{itm:vtexlem4} $|I'| > n^{ .51 }$ for all $I' \in \Ical_k(I)$.
    \end{enumerate}
    Then $\bP_\tau( E ) = 1 - o(1)$, where the $o(1)$ term is uniform over the choice of $B$ and $\tau$.
\end{lem}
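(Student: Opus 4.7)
The plan is to verify each of the four conditions in turn using the moment estimates of Lemma~\ref{lem:vkexp} together with Lemma~\ref{lem:positive_w}; since $|B|\ll n/\log n$ the coupling proof of Lemma~\ref{lem:positive_w} transfers to the conditional measure $\bP_\tau$ at the cost of $1+o(1)$ factors (the conditional distribution of $\Pi_n$ on $[n]\setminus B$ is a uniform bijection onto $[n]\setminus\tau(B)$). For~\ref{itm:vtexlem1}, Lemma~\ref{lem:vkexp} gives $\bE_\tau w_k(I)=(1+o(1))(\log\log n)^3$ and $\Var_\tau w_k(I)\leq (2k-1)\bE_\tau w_k(I)=O((\log\log n)^3)$, so Chebyshev's inequality yields a deviation probability of $O(\eps^{-2}(\log\log n)^{-3})=o(1)$, uniformly in $B,\tau$. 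For~\ref{itm:vtexlem2}, applying the first-moment bound of the same lemma with $k$ replaced by $k+1$ gives $\bE_\tau w_{k+1}(I)=(1+o(1))n^{-1/k}(\log\log n)^{3+3/k}=o(1)$, and Markov's inequality finishes the job.

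The core of the argument is~\ref{itm:vtexlem4}, which I will prove in the sharper form $|I'|>n^{1-\alpha}$ for every $I'\in\Ical_k(I)$, for some fixed $\alpha\in(1/k,1/(k-1))$; such an $\alpha$ exists precisely because $k\geq 3$, and can be chosen so that $1-\alpha\geq 4/7$ (take $\alpha=3/7$ for $k=3$, and any value in the interval for $k\geq 4$), yielding the stated bound. Equivalently, no two distinct elements $j_1<j_2$ of $W_k(I)$ satisfy $j_2-j_1\leq n^{1-\alpha}+1$. If the two associated runs $\{\Pi_n(j_1),\dots,\Pi_n(j_1)+k-1\}$ and $\{\Pi_n(j_2),\dots,\Pi_n(j_2)+k-1\}$ overlap, their union contains at least $k+1$ consecutive values in $\Pi_n(I)$, forcing $w_{k+1}(I)\geq 1$, which~\ref{itm:vtexlem2} rules out with probability $1-o(1)$. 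In the disjoint case, a direct first-moment computation analogous to the proof of Lemma~\ref{lem:vkexp} bounds the expected number of such pairs by $O(|I|n^{1-\alpha})\cdot O(n^2)\cdot O(|I|^{2k-2}/n^{2k})=O(n^{1/k-\alpha})(\log\log n)^{O(1)}=o(1)$, where the three factors count the ordered pairs $(j_1,j_2)$, the starting values $(a,b)=(\Pi_n(j_1),\Pi_n(j_2))$ with $|a-b|\geq k$, and the probability that the required $2k$ preimages land in $I$ with $\Pi_n(j_1)=a, \Pi_n(j_2)=b$; Markov's inequality then closes this case.

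For~\ref{itm:vtexlem3}, invoke Lemma~\ref{lem:positive_w} with run-length parameter $k-1$ and $\eps=\alpha$: since $(k-1)\alpha<1$, the event that every interval of length at least $n^{1-\alpha}$ satisfies $w_{k-1}(\cdot)>0$ holds with probability $1-e^{-\Omega(n^{1-(k-1)\alpha})}=1-o(1)$. Combined with the strengthened version of~\ref{itm:vtexlem4}, every $I'\in\Ical_k(I)$ has length exceeding $n^{1-\alpha}$ and therefore $w_{k-1}(I')>0$. A union bound over the four ``bad'' events gives $\bP_\tau(\overline{E})=o(1)$ uniformly in $B,\tau$. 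The main subtlety I expect is the joint calibration of $\alpha$: one needs $1/k<\alpha<1/(k-1)$ simultaneously with $1-\alpha\geq 4/7$, and this window is nonempty precisely for $k\geq 3$, which explains the hypothesis on $k$.
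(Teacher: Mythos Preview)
Your proof is correct and follows the same high-level plan as the paper: Chebyshev for~\ref{itm:vtexlem1}, Markov for~\ref{itm:vtexlem2}, a gap bound for~\ref{itm:vtexlem4}, and Lemma~\ref{lem:positive_w} for~\ref{itm:vtexlem3}. The only genuine difference is in the mechanism for~\ref{itm:vtexlem4}. The paper conditions on the image set $\Pi_n[I]$ and observes that, under this conditioning, $\Pi_n|_I$ is a uniform bijection $I\to\Pi_n[I]$; hence $W_k(I)$ is a uniformly random subset of $I$ of size $w_k(I)$, and on the event $\{w_k(I)\leq 2(\log\log n)^3\}$ a birthday-type estimate gives that no two of its points are within $n^{1-\delta}$ with failure probability $O(n^{1/k-\delta+o(1)})$. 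Your direct first-moment calculation on ordered pairs $(j_1,j_2)\in W_k(I)^2$ with $j_2-j_1\leq n^{1-\alpha}$, split according to whether the associated $k$-runs overlap, reaches the identical exponent $n^{1/k-\alpha}$. The paper's route is a touch slicker in that the exchangeability observation sidesteps the overlap/disjoint case split and the sum over $(a,b)$; your route is more hands-on but entirely self-contained and avoids the auxiliary conditioning step. For~\ref{itm:vtexlem3} both arguments invoke Lemma~\ref{lem:positive_w} in the same way, and both (yours explicitly, the paper tacitly) transfer it to $\bP_\tau$ via the fact that under the conditioning $\Pi_n|_{[n]\setminus B}$ is a uniform bijection onto $[n]\setminus\tau(B)$ with $|B|\ll n/\log n$.
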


\begin{proof} 
    By Chebyshev's inequality and Lemma~\ref{lem:vkexp} we have
    \begin{align}
        \bP_\tau \left( \left|  w_k(I) - \bE_\tau w_k(I)   \right| > \frac{\epsilon}{2} (\log\log n)^3 \right) &\leq \frac{ (2k-1) \bE_\tau w_k(I) }{ \left( \frac{\epsilon}{2} (\log\log n)^3 \right)^2   },
    \end{align}
    where
    \begin{equation}
        \bE _\tau w_k(I) = (1 + o(1)) n^{1-k}|I|^k = (1+o(1)) (\log\log n)^3.
    \end{equation}
    Thus
    \begin{equation}
        \bP_\tau \left( \left|  w_k(I) - (1+o(1)) (\log\log n)^3   \right| > \frac{\epsilon}{2} (\log\log n)^3 \right) < \frac{8k}{  \epsilon^2  (\log \log n)^3 } = o(1),
    \end{equation}
    showing Part~\ref{itm:vtexlem1}.

    A simple application of Markov's inequality together with Lemma~\ref{lem:vkexp} gives
    \begin{equation}
        \bP_\tau \left( w_{k+1} (I) > 0 \right) \leq \bE_\tau w_{k+1}(I) = (1+o(1)) n^{-k} \left( n^{1-\frac{1}{k}}   (\log \log n)^{\frac{3}{k}} \right)^{k+1} = o(1),
    \end{equation}
    giving Part~\ref{itm:vtexlem2}.
    
    We now look at Parts~\ref{itm:vtexlem3} and~\ref{itm:vtexlem4}. 
    We will show that for any fixed $\delta > 1/k$ we have 
    
    \begin{equation}\label{eq:clmprp}
        \bP_\tau \left(  |I'| \geq n^{1 - \delta} \text{ for all }I' \in \cI_k(I) \right) = 1 - o(1).
    \end{equation}
    
    \noindent
    This will imply Part~\ref{itm:vtexlem3} with an application Lemma~\ref{lem:positive_w} with 
    e.g.~$\delta = \frac12 ( \frac{1}{k} + \frac{1}{k-1})$ satisfying $\frac{1}{k} < \delta < \frac{1}{k-1}$. 
    Part~\ref{itm:vtexlem4} will follow by setting $\delta = .49$. 
    
    Note that $w_k(I)$ depends only on the image $\Pi_n[I]$. 
    Conditioning on $\Pi_n[I] = I'$ for some $I'\subseteq [n] \setminus \tau[B]$
    (in addition to conditioning on $\Pi_n(i) = \tau(i)$ for all $i \in B$), the restriction 
    $\Pi_n \upharpoonright I$ is distributed 
    uniformly over all bijections $I\to I'$. 
    From this it can be seen that, conditional on the value of $w_k(I)$, $W_k(I)$ is a subset of $I$ of size 
    $w_k(I)$ selected uniformly at random. 
    Let $F$ be the event $\{w_k(I) < 2 (\log \log n)^3 \}$. 
    It follows that, conditional on $F$, the probability that $|I'|< n^{1-\delta}$ for some $I'\in \cI_k(I)$ is 
    at most the probability that out of $\lfloor2(\log \log n)^3\rfloor$ points selected uniformly at random 
    from $I$, there is a pair $i,j$ with $|i-j|\leq n^{1-\delta}$. 
    There are at most $|I|\cdot n^{1-\delta}$ such pairs in $I$. The probability 
    that two points chosen uniformly at random from $I$ (without replacement) have distance $\leq n^{1-\delta}$ is thus at most
    
    $$ \frac{|I|\cdot n^{1-\delta}}{{|I|\choose 2}} = \frac{2 n^{1-\delta}}{|I|-1} = n^{1/k-\delta+o(1)}. $$ 
    
    Let 
    
    $$ X := \left|\left\{ \{i,j\} : i,j \in W_k(I), i\neq j, |i-j|\leq n^{1- \delta}\right\}\right|, $$
    
    \noindent
    denote the number of pairs of elements of $W_k(I)$ that are at distance $\leq n^{1-\delta}$.
    It follows that 
    $$ \begin{array}{rcl}
        \Pee_\tau( X > 0 | F ) & \leq & \Ee_\tau( X | F ) \\[2ex]
        & \leq &  {\lfloor2(\log \log n)^3\rfloor\choose 2} \cdot n^{1/k-\delta+o(1)} \\[2ex]
        & = & n^{1/k-\delta+o(1)} \\
        & = & o(1), 
\end{array} $$
    
    \noindent
    where the last inequality holds by choice of $\delta > 1/k$.
    By Part~\ref{itm:vtexlem1}, $\bP_\tau \left(F \right) = 1-o(1)$, which gives that 
    
    $$ \Pee_\tau( X > 0 ) = 
    \Pee_\tau( X > 0 | F )\cdot\Pee_\tau(F) + \Pee_\tau( X > 0 | \overline{F} )\cdot\Pee_\tau( \overline{F} )
    \leq \Pee_\tau( X > 0 | F ) + \Pee_\tau( \overline{F} )
    = o(1), $$
    
    \noindent
    proving~\eqref{eq:clmprp} and hence Parts~\ref{itm:vtexlem3} and~\ref{itm:vtexlem4}.
\end{proof}

\begin{cor}\label{cor:at_least_half}
    Let $k\geq 3$ be arbitrary, $0 < \eps < 1/k$ and let $I_1,\dots,I_N$ be disjoint intervals of 
    length $\lfloor n^{1-1/k} \cdot \left(\log\log n\right)^{3/k} \rfloor$ where $N = \ceil{n^\eps}$. 
    With probability $1-e^{-\Omega(n^{\epsilon})}$, at least half of the $I_i$ satisfy 
    Properties~\ref{itm:vtexlem1}-\ref{itm:vtexlem4} of Lemma~\ref{lem:vtexlem}.
\end{cor}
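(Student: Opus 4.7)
The plan is to apply the generalized Chernoff bound from Theorem~\ref{thm:generalized_chernoff} to the indicator random variables
\begin{equation}
X_i \be \ind{I_i \text{ does not satisfy properties (i)--(iv) of Lemma~\ref{lem:vtexlem}}},\qquad i=1,\dots,N,
\end{equation}
so that it suffices to show $\bP\!\left(\sum_i X_i \geq N/2\right)\leq e^{-\Omega(n^\epsilon)}$. The events $\{X_i=1\}$ are not independent, which is precisely the setting handled by Theorem~\ref{thm:generalized_chernoff}, provided we can show the multiplicative tail estimate $\bP\!\left(\bigwedge_{i\in S}X_i=1\right)\leq \delta^{|S|}$ for every $S\subseteq[N]$ and some small absolute constant $\delta$.

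To verify the hypothesis of Theorem~\ref{thm:generalized_chernoff}, I would fix $S=\{i_1,\dots,i_s\}\subseteq [N]$ and expand the joint probability as a product of conditional probabilities, revealing $\Pi_n$ on $I_{i_1},\dots,I_{i_{\ell-1}}$ before estimating the probability that $X_{i_\ell}=1$. At the $\ell$-th step, the conditioning is of the form $\{\Pi_n(i)=\tau(i) : i\in B_\ell\}$ with $B_\ell\be I_{i_1}\cup\dots\cup I_{i_{\ell-1}}$ disjoint from $I_{i_\ell}$ and
\begin{equation}
|B_\ell| \ \leq\  N\cdot |I_1| \ =\ O\!\left(n^{\,\epsilon+1-1/k}(\log\log n)^{3/k}\right),
\end{equation}
which, since $\epsilon<1/k$, is $\ll n/\log n$ for $n$ large. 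Thus the hypothesis of Lemma~\ref{lem:vtexlem} is met at every step, so the uniform $o(1)$ bound there gives $\bP_\tau(X_{i_\ell}=1)\leq \delta$ for any $\delta\in(0,1/2)$ of our choosing, once $n$ is large enough (depending on $\delta$, $k$ and $\epsilon$ only). Multiplying the $s$ conditional bounds yields $\bP\!\left(\bigwedge_{i\in S}X_i=1\right)\leq \delta^s$, uniformly in $S$.

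With this in hand I would apply Theorem~\ref{thm:generalized_chernoff} with the parameters $\delta$ above and $\gamma=1/2$ to conclude
\begin{equation}
\bP\!\left(\sum_{i=1}^N X_i \ \geq\ \tfrac12 N\right)\ \leq\ \exp\!\left\{-2N\!\left(\tfrac12-\delta\right)^{\!2}\right\}\ =\ e^{-\Omega(N)}\ =\ e^{-\Omega(n^{\epsilon})},
\end{equation}
which is the desired conclusion. The main obstacle is not really the Chernoff step, which is mechanical, but rather verifying that Lemma~\ref{lem:vtexlem} can legitimately be invoked after arbitrarily long sequences of previous conditionings; the key point is that its $o(1)$ bound is uniform in the injection $\tau$ as long as $|B|\ll n/\log n$, and the assumption $\epsilon<1/k$ is exactly what keeps the total reveal $|B_\ell|$ within that regime even after conditioning on all $N$ intervals.
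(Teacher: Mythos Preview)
Your proposal is correct and follows essentially the same approach as the paper: both define the failure indicators, use the chain rule to bound $\bP\bigl(\bigcap_{i\in S}\{X_i=1\}\bigr)$ by conditioning on $\Pi_n$ restricted to the previously revealed intervals (noting that each $X_{i_j}$ is measurable with respect to $\Pi_n\upharpoonright I_{i_j}$), invoke the uniform $o(1)$ bound from Lemma~\ref{lem:vtexlem} at each step via the observation that $\eps<1/k$ keeps $|B_\ell|\ll n/\log n$, and finish with the generalized Chernoff bound of Theorem~\ref{thm:generalized_chernoff} with $\gamma=\tfrac12$. The only cosmetic difference is that the paper uses a function $g(n)=o(1)$ for the per-step bound whereas you fix a constant $\delta<\tfrac12$; both yield the same $e^{-\Omega(n^\eps)}$ conclusion.
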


\begin{proof}
    Let $E_i$ denote the event that $I_i$ satisfies 
    properties~\ref{itm:vtexlem1}-\ref{itm:vtexlem4} of Lemma~\ref{lem:vtexlem}. 
    Let $S=\{s_1,\dots,s_K\}\subseteq [N]$ be a subset of cardinality $K>N/2$.
    We can assume without loss of generality that $n$ is sufficiently large for the $o(1)$ term (uniform over
    the choice of $B, \tau$) in 
    Lemma~\ref{lem:vtexlem} to be $<1/100$.
    
    We note that for $i=1,\dots,K$:
    
    $$ \Pee( \overline{E_{s_i}} | \overline{E_{s_1}}\cap\dots\cap\overline{E_{s_{i-1}}} )
    < 1/100. $$
    
    This follows from Lemma~\ref{lem:vtexlem}, noting that 
    whether or not the event $\overline{E_{s_1}}\cap\dots\cap\overline{E_{s_{i-1}}}$ holds
    is determined completely by the restriction of $\Pi_n$ to $I_{s_1}\cup\dots\cup I_{s_{i-1}}$, a set 
    of cardinality at most 
    
    $$|I_1|+\dots+|I_N| = N \cdot \lfloor n^{1-1/k} \cdot \left(\log\log n\right)^{3/k} \rfloor
    = n^{1+\eps-1/k+o(1)} \leq \frac{n}{\log n}, $$
    
    \noindent
    where the last inequality holds for sufficiently large $n$ (and by choice of $\eps$).
    It follows that 
    
    $$ \Pee( \overline{E_{s_1}}\cap\dots\cap\overline{E_{s_{K}}} ) 
    = \Pee(  \overline{E_{s_1}} ) \cdot \Pee(  \overline{E_{s_2}} |  \overline{E_{s_1}} ) \cdot \dots \cdot 
    \Pee(  \overline{E_{s_K}} |  \overline{E_{s_1}}\cap\dots\cap\overline{E_{s_{K-1}}})
    \leq (1/3)^{N/2}, $$
    
    \noindent
    for each set $S \subseteq [N]$ of cardinality $>N/2$.
    Hence, if $F$ denotes the event that at least $N/2$ of the events $E_1,\dots,E_N$ hold then
    
    $$ \Pee( \overline{F} ) \leq 2^N \cdot (1/100)^{N/2} = (1/5)^N = \exp[ -\Omega(n^\eps) ]. $$
    
    \noindent
    (Here $2^N$ is a crude upper bound on the number of subsets of $[N]$ of cardinality $>N/2$.)
\end{proof}

\begin{lem}\label{lem:shrink_CkI}
    Let $I$ be an interval satisfying Conditions~\ref{itm:vtexlem2}-\ref{itm:vtexlem4} of 
    Lemma~\ref{lem:vtexlem}. 
    Let $J$ be obtained from $I$ by deleting an arbitrary element. Then $J$ satisfies Conditions~\ref{itm:vtexlem2}-\ref{itm:vtexlem4} of Lemma~\ref{lem:vtexlem} and 
    \begin{equation}\label{eq:gjenhhnnntny}
        |\cI_k(I)| - k \leq |\cI_k(J)| \leq |\cI_k(I)|.
    \end{equation}
\end{lem}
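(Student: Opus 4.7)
The plan rests on the simple monotonicity $A \subseteq B \Rightarrow W_m(A) \subseteq W_m(B)$, which is immediate from $\Pi_n[A] \subseteq \Pi_n[B]$ and the definition of $W_m$. Writing $b$ for the largest element of $I$, so that $J = I \setminus \{b\}$, this monotonicity instantly handles Condition~\ref{itm:vtexlem2} of Lemma~\ref{lem:vtexlem} for $J$: namely $W_{k+1}(J) \subseteq W_{k+1}(I) = \emptyset$. It also yields $|W_k(J)| \leq |W_k(I)|$, and the upper bound $|\cI_k(J)| \leq |\cI_k(I)|$ in \eqref{eq:gjenhhnnntny} then follows via the identity $|\cI_k(A)| = \max\{0, |W_k(A)| - 1\}$.

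The only substantive step is bounding $|W_k(I)| - |W_k(J)|$ from above. I would use the explicit description
\[
    W_k(J) = \bigl\{ i \in W_k(I) \cap J : \Pi_n(b) \notin \{\Pi_n(i), \ldots, \Pi_n(i) + k - 1\}\bigr\},
\]
which holds because the only change in the defining condition of $W_k$ when passing from $I$ to $J$ is that the single value $\Pi_n(b)$ is no longer available in the image set. By injectivity of $\Pi_n$ at most $k$ indices $i$ satisfy $\Pi_n(i) \in \{\Pi_n(b) - k + 1, \ldots, \Pi_n(b)\}$, and one of them is $b$ itself, which does not belong to $J$. Combined with $|W_k(I)| - |W_k(I) \cap J| \leq 1$ this gives $|W_k(J)| \geq |W_k(I)| - k$, and the lower bound $|\cI_k(J)| \geq |\cI_k(I)| - k$ in \eqref{eq:gjenhhnnntny} again follows from $|\cI_k(A)| = \max\{0, |W_k(A)| - 1\}$ (treating the edge case $|W_k(I)| \leq k$ trivially).

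For Conditions~\ref{itm:vtexlem3} and~\ref{itm:vtexlem4} of Lemma~\ref{lem:vtexlem} I would write $W_k(I) = \{i_1 < \cdots < i_L\}$ and $W_k(J) = \{i_{p_1} < \cdots < i_{p_M}\} \subseteq W_k(I)$, so each interval $I'' \in \cI_k(J)$ has the form $\{i_{p_s}+1, \ldots, i_{p_{s+1}}-1\}$. Since $p_s + 1 \leq p_{s+1}$, such an $I''$ contains the interval $I' := \{i_{p_s}+1, \ldots, i_{p_s+1}-1\} \in \cI_k(I)$. This immediately gives $|I''| \geq |I'| > n^{4/7}$, establishing Condition~\ref{itm:vtexlem4}, and a second appeal to monotonicity of $w_{k-1}$ in its set argument yields $w_{k-1}(I'') \geq w_{k-1}(I') > 0$, establishing Condition~\ref{itm:vtexlem3}. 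The only part requiring any care is the injectivity count in the middle paragraph; everything else reduces to monotonicity of $W_m(\cdot)$ and the explicit formula for $|\cI_k(A)|$.
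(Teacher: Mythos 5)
Your proof is correct and takes essentially the same route as the paper's: monotonicity $W_t(J)\subseteq W_t(I)$ for Condition~\ref{itm:vtexlem2} and the upper bound, the injectivity count of preimages of $\{\Pi_n(b)-k+1,\dots,\Pi_n(b)\}$ for the lower bound, and the observation that every interval of $\cI_k(J)$ contains an interval of $\cI_k(I)$ for Conditions~\ref{itm:vtexlem3}--\ref{itm:vtexlem4}. The paper bounds $|W_k(I)\setminus W_k(J)|\leq k$ in one step rather than splitting off $b$ separately, but the content is identical.
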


\begin{proof}
    As $J \subseteq I$ we have $W_t(J) \subseteq W_t(I)$ for all $t\geq 1$, by definition of $W_t(.)$. 
    IN particular $w_{k+1}(I) = 0$ implies $w_{k+1}(J) = 0$. In other words, Part~\ref{itm:vtexlem4} holds
    for $J$ if it holds for $I$. 
    Similarly, any interval of $J' \in \Ical_k(J)$ must contain
    some interval $I' \in \Ical_k(I)$, showing that Part~\ref{itm:vtexlem4} holds for $J$ if it holds for $I$.
    
    To see that~\eqref{eq:gjenhhnnntny}, suppose $J := I \setminus \{j\}$ for some $j$. 
    Then there are at most $k$ elements $i\in I$ such that $\Pi_n(j)\in \{\Pi_n(i),\ldots, \Pi_n(i)+k-1\}$. 
    Thus, by definition of $W_k(.)$, we have $|W_k(I)\setminus W_k(J)| \leq k$. 
\end{proof}

Recall that we defined
\begin{equation}
    S(I, J) = \{ i : i \in \Pi_n(I), i+1 \in \Pi_n(J) \}.
\end{equation}

\begin{lem}\label{lem:S}
Let $\eps,\delta > 0$ be arbitrary (but fixed). 
With probability at least $1-e^{-n^{\Omega(1)}}$, for every two disjoint intervals $I_1, I_2 \subseteq [n]$ of 
length at least $n^{\frac12+\eps}$, we have 

$$ (1-\delta)\cdot\frac{|I_1|\cdot|I_2|}{n} \leq |S(I_1,I_2)| \leq (1+\delta)\cdot\frac{|I_1|\cdot|I_2|}{n}. $$ 

\end{lem}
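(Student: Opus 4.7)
The plan is to reduce the claim to a Chernoff bound for a one--dependent sum of Bernoullis via a coupling with an iid colouring model. First I would work with $\tau \be \Pi_n^{-1}$ (which is again uniform on $S_n$) and rewrite
$$
 |S(I_1,I_2)| = \sum_{i=1}^{n-1} X_i, \qquad X_i \be \ind{\{\tau(i)\in I_1,\; \tau(i+1)\in I_2\}}.
$$
A direct computation gives $\mu \be \bE |S(I_1,I_2)| = |I_1||I_2|/n$, and the hypothesis on $|I_1|,|I_2|$ yields $\mu \geq n^{2\eps}$---this is the scale on which we need concentration, since an $e^{-c\delta^2 \mu}$ tail is then $e^{-n^{\Omega(1)}}$.

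Next I would define a colouring $c : [n] \to \{A,B,C\}$ by $c_i = A$ when $\tau(i)\in I_1$, $c_i = B$ when $\tau(i)\in I_2$, and $c_i = C$ otherwise. The law of $c$ is uniform over all sequences of exact type $(|I_1|, |I_2|, n-|I_1|-|I_2|)$, and $|S(I_1,I_2)|$ counts the $AB$--adjacencies in $c$. I would then couple $c$ with an iid sequence $c' = (c_1',\ldots, c_n')$ whose marginal on each coordinate puts mass $|I_1|/n$, $|I_2|/n$, $1-(|I_1|+|I_2|)/n$ on $A$, $B$, $C$ respectively. Writing $E$ for the event that $c'$ has the correct empirical type, Stirling's formula gives $\bP(E) \geq 1/\on{poly}(n)$, so any concentration bound for a functional of $c'$ transfers to the same functional of $c$ (by conditioning on $E$) at the cost of a polynomial factor---harmless on the $e^{-n^{\Omega(1)}}$ scale.

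For the iid count $N' \be \sum_{i=1}^{n-1} \ind{\{c_i'=A,\, c_{i+1}'=B\}}$, the key observation is that $N'$ is $1$--dependent: splitting it into the subsums $N'_{\on{odd}}$ and $N'_{\on{even}}$ over odd, respectively even, indices $i$ produces two sums of \emph{mutually independent} Bernoullis, since the windows $\{i,i+1\}$ for odd $i$ (and likewise for even $i$) are pairwise disjoint subsets of $[n]$. Applying Chernoff's inequality (Theorem~\ref{thm:chernoff}) to each half and combining by a union bound yields
$$
\bP\bigl( |N' - \bE N'| > \delta \bE N' \bigr) \leq 4 \exp\bigl(-c \delta^2 \bE N'\bigr)
$$
for some absolute $c>0$, which is $e^{-n^{\Omega(1)}}$ since $\bE N'$ differs from $\mu$ by at most $1$ and $\mu \geq n^{2\eps}$. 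Transferring back through the coupling, and then taking a union bound over the $O(n^4)$ pairs of disjoint intervals in $[n]$, completes the argument.

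The main technical point that will need some care is verifying the lower bound $\bP(E) \geq 1/\on{poly}(n)$ uniformly in the admissible range of $|I_1|,|I_2|$---in particular in the regime where $|I_1|+|I_2|$ is close to $n$ and $p_C$ is small; a standard Stirling estimate handles this, with the extreme case $|I_1|+|I_2|=n$ reducing simply to a two--letter alphabet. Everything else is routine.
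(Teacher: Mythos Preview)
Your proposal is correct and follows essentially the same route as the paper: rewrite $|S(I_1,I_2)|$ via the uniform permutation as a count of adjacent $(A,B)$--pairs in a random colouring with fixed type, decouple by passing to an iid multinomial colouring conditioned on the event $E$ of the correct type, split the resulting $1$--dependent sum into odd and even indices so each becomes an honest binomial, apply Chernoff, and union bound over all $O(n^4)$ interval pairs. The only cosmetic difference is that the paper works directly with the random image sets $\Pi_n(I_1),\Pi_n(I_2)$ rather than passing through $\tau=\Pi_n^{-1}$, and for the lower bound on $\bP(E)$ the paper invokes the crude binomial--mode bound (Lemma~\ref{lem::binom_max}) twice to get $\bP(E)\geq 1/(n+1)^2$ uniformly---which sidesteps your Stirling computation and in particular your worry about the regime $|I_1|+|I_2|\approx n$.
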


\begin{proof}
Fix disjoint intervals $I_1, I_2$ of the stated sizes. 
Note $J_1 := \Pi_n(I_1), J_2 := \Pi_n(I_2)$ are disjoint, random sets of cardinalities 
exactly $k_1 := |I_1|$, resp.~$k_2 := |I_2|$, every pair of disjoint sets of the correct 
cardinalities being equally likely.

We approximate these by sets with a random number of elements, generated as follows. 
Write $p_i := k_i/n$ for $i=1,2$. Let $Z_1, \dots, Z_n$ be i.i.d.~with common distribution given by 

$$ \bP( Z_j = x ) = \left\{\begin{array}{cl} 1-(p_1+p_2) & \text{ if $x=0$, } \\
              p_1 & \text{ if $x=1$, } \\
                      p_2 & \text{ if $x=2$. } \\
                     \end{array}\right.
$$

\noindent 
We now set $J_1' := \{ j : Z_j = 1 \}, J_2' := \{ j : Z_j = 2 \}$. Conditional on the event 

$$E := \{|J_1'| = k_1, |J_2'|=k_2\}, $$

\noindent
the pair $(J_1', J_2')$ has the same distribution as $(J_1, J_2)$.
Namely, the uniform distribution over all pairs of disjoint sets of the correct cardinalities $k_1, k_2$. 
Lemma~\ref{lem::binom_max} gives the crude bound

\begin{equation}\label{eq:Eineq}
\begin{array}{rcl}
     \bP( E ) & = & \bP \left( |J'_1|=k_1 \right) \bP \left( |J_2'|=k_2 \Big| |J'_1|=k_2  \right)  \\[2ex]
     & = & \displaystyle \Pee( \Bi(n,p_1)=k_1 ) \cdot \Pee\left( \Bi\left( n-k_1, p_2/(1-p_1) \right) = k_2 \right) \\[2ex]
     & = & \displaystyle \Pee\left( \Bi\left(n,k_1/n\right)=k_1 \right) \cdot 
     \Pee\left( \Bi\left( n-k_1, k_2/(n-k_1) \right) = k_2 \right) \\[2ex]
     & \geq & \displaystyle \frac{1}{(n+1)^2}.
\end{array} 
\end{equation}

Next, we consider the size of 

$$S' \be \{ j : j \in J_1', j+1 \in J_2' \}. $$

\noindent
Write $X_j := \ind{\{j\in S'\}}$, so that $|S'| = X_1 + \dots + X_{n-1}$. We have $\Ee X_j = p_1p_2 =: p$ and 

\begin{equation}\label{eq:SaccIneq} \Ee |S'| = (n-1)p = \frac{|I_1|\cdot|I_2|}{n} - p. \end{equation}

\noindent
In particular $\bE |S'| = (1+o(1)) n^{2\eps}$ by our assumption on the sizes $|I_1|, |I_2| \geq n^{1/2+\eps}$. Now $|S'|$ is not a binomial random variable as the $X_j$ are (mildly) dependent -- it is for instance not possible to have two consecutive integers in $S'$. Note that $X_j$ is however (mutually) independent from the collection of all $X_i$ with $|i-j| \geq 2$. So, writing 

$$ X_{\text{even}} := \sum_{j \text{ even}} X_j, \quad X_{\text{odd}} := \sum_{j \text{ odd}} X_j, $$

\noindent
we see that $|S'| = X_{\text{even}} + X_{\text{odd}}$ and $X_{\text{even}} \isd \Bi( \lfloor (n-1)/2\rfloor, p ), X_{\text{odd}} \isd \Bi( \lceil (n-1)/2\rceil, p )$. 
By the Chernoff bound we have

$$ \bP \left( |X_{\text{even}}-\Ee X_{\text{even}}| \geq \frac{\delta}{100} \cdot \Ee X_{\text{even}}\right) \leq 
e^{-\Omega( \Ee X_{\text{even}} )} =  e^{-\Omega( np )} = e^{-n^{\Omega(1)}}, $$

\noindent
and analogously $\bP( |X_{\text{odd}}-\Ee X_{\text{odd}}| \geq \frac{\delta}{100} \cdot 
\Ee X_{\text{odd}} ) = e^{-n^{\Omega(1)}}$. It follows that 

\begin{equation}\label{eq:Sdashdev} 
\bP\left( \left| |S'| - \Ee|S'|\right| \geq \frac{\delta}{100} \Ee |S'| \right) \leq e^{-n^{\Omega(1)}}. 
\end{equation}

We see that 

\begin{equation}\label{eq:apa} 
\begin{array}{rcl} 
\bP\left( \left|S -\frac{|I_1| \cdot |I_2|}{n}\right| > \delta \frac{|I_1| \cdot |I_2|}{n} \right)
& \leq &
\bP\left( |S -\Ee S'| > \frac{\delta}{100} \Ee |S'| \right) \\[2pt]
& = &
\bP\left( |S' -\Ee S'| > \frac{\delta}{100} \Ee |S'| \Big| E \right) \\[2pt]
& \leq & 
\bP\left( |S' -\Ee S'| > \frac{\delta}{100} \Ee |S'| \right) / \bP(E)  \\[2pt]
& \leq & (n+1)^2 \cdot e^{-n^{\Omega(1)}} \\[4pt]
& = & e^{-n^{\Omega(1)}}. 
\end{array} \end{equation}

\noindent
(Here the first inequality holds for $n$ sufficiently large and uses~\eqref{eq:SaccIneq}. 
In the second line we use the earlier observation that $(S'|E) \isd S$ and in the fifth line we 
use~\eqref{eq:Eineq} and~\eqref{eq:Sdashdev}.)

It is easily seen that the bound~\eqref{eq:apa} holds uniformly over the choice of 
all pairs of disjoint intervals $I_1, I_2 \subseteq [n]$ of sizes $|I_1|,|I_2|\geq n^{1/2+\eps}$.
(Looking more carefully at the Chernoff bound as stated in Theorem~\ref{thm:chernoff}, the error bound 
$e^{-n^{\Omega(1)}}$ is 
bounded above by an expression of the form $\exp[ - c \cdot n^{2\eps} ]$, where
$c=c(\delta)$ depends only on $\delta$.)
Since there are $O( n^4 )$ choices of such pairs of intervals, the lemma follows by the union bound.
\end{proof}
    
\begin{lem}\label{lem:exists_J}
For every fixed $\eps>0$ and $k\geq 3$, with probability $1-e^{-n^{\Omega(1)}}$, for every sequence $\Ical = (I_1,\dots, I_N)$ of at most $10 \log\log n$-many disjoint intervals satisfying 
$$ n^{\frac12+\eps} \leq |I_1|,\dots,|I_N| \leq n^{1-\eps}, $$

\noindent
and every directed graph $G$ with $V(G) = \{ I_1,\ldots, I_N \}$, there exists an interval $J \subseteq [n]$ such that 

$$ H( \Ical ; \Ical_k(J) ) = G. $$

\end{lem}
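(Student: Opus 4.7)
The plan is to fix the data $(\mathcal{I}, G)$, show the existence of a suitable $J$ with conditional probability $1 - e^{-n^{\Omega(1)}}$, and then union bound over all possible $(\mathcal{I}, G)$. Since $N \le 10\log\log n$, the number of choices of $\mathcal{I}$ is at most $n^{2N} = n^{O(\log\log n)}$ and the number of directed graphs $G$ on $N$ vertices is at most $2^{N^2} = n^{o(1)}$. The product is of the form $\exp((\log n)^{1+o(1)})$, which is easily absorbed by $e^{-n^{\Omega(1)}}$.

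The key structural observation underpinning the construction is that a size-two sub-interval $\{j_1 < j_2\} \in \mathcal{I}_k(J)$ encodes exactly the arc $(I_a, I_b)$ precisely when $\Pi_n(j_1)-1 \in \Pi_n(I_a)$ and $\Pi_n(j_2)-1 \in \Pi_n(I_b)$, using that the sets $\Pi_n(I_\ell)$ are disjoint. So to realize a given target arc one needs a ``coder'' pair $(j_1, j_2)$ with the prescribed image pattern that appears as a length-two gap between consecutive members of $W_k(J)$. By Lemma~\ref{lem:S}, for each ordered pair $(I_a, I_b)$ the set of candidate coder pairs has size $\Theta(|I_a||I_b|/n) \ge n^{2\eps}$, far more than the $|E(G)| \le N^2 = O((\log\log n)^2)$ coders we need to place.

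Given the abundance of coder pairs, the construction of $J$ proceeds by selecting $J$ as a suitable interval disjoint from $\bigcup_\ell I_\ell$ so that $\mathcal{I}_k(J)$ consists of coder pairs realizing exactly the edge set $E(G)$ and no other edges. Since $W_k(J)$ and hence $\mathcal{I}_k(J)$ is a global function of $\Pi_n$ restricted to $J$, the construction requires controlling simultaneously that (i) the boundary points of each intended coder lie in $W_k(J)$, (ii) the internal points of each coder do not lie in $W_k(J)$, and (iii) every sub-interval of $\mathcal{I}_k(J)$ is either a designated coder or a ``null'' sub-interval producing no edge. I would tackle this by parametrizing a polynomial family of candidate choices for $J$ (say $J = [a,b]$ ranging over suitable $(a,b)$) and, using second-moment and Chernoff-type estimates in the spirit of Lemma~\ref{lem:vtexlem} and Corollary~\ref{cor:at_least_half}, showing that with probability $1-e^{-n^{\Omega(1)}}$ at least one candidate realizes the graph $G$ exactly.

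The main obstacle I expect is the \emph{exactness} of the realization: it is relatively easy to show that all arcs of $G$ appear, but more delicate to rule out spurious arcs arising from unintended sub-intervals of $\mathcal{I}_k(J)$. Handling this requires both the abundance of coder pairs (to guarantee coverage of $E(G)$) and a precise description of $\mathcal{I}_k(J)$ (to suppress extraneous arcs). I would expect the freedom to slide $J$ across many positions, combined with the large deviation estimates already developed for $w_k$ and related statistics, to be the technical engine driving this step.
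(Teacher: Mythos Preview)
Your ``key structural observation'' is incorrect and the approach built on it cannot work. By definition, $e(\mathcal{I};J')$ is defined only when $S(I_\ell,J')\neq\emptyset$ for \emph{every} $\ell=1,\dots,N$. If $|J'|=2$, say $\Pi_n(J')=\{a,b\}$, then $S(I_\ell,J')\neq\emptyset$ forces $a-1\in\Pi_n(I_\ell)$ or $b-1\in\Pi_n(I_\ell)$; since the $\Pi_n(I_\ell)$ are disjoint, at most two indices $\ell$ can satisfy this. Hence for $N\ge 3$ a size-two sub-interval never produces a well-defined arc at all, let alone the prescribed one. The whole ``coder pair'' mechanism collapses, and with it the counting via Lemma~\ref{lem:S} and the subsequent discussion of spurious arcs.

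The paper's argument is quite different and avoids this issue entirely. One picks $M=n^{\Omega(1)}$ disjoint candidate intervals $J_1,\dots,J_M$ outside $\bigcup_\ell I_\ell$, each of length $\lceil n^{1-1/k}(\log\log n)^{3/k}\rceil$. After conditioning on $\Pi_n\!\upharpoonright\!\bigcup_\ell I_\ell$, on the partition $\mathcal{I}_k(J_m)=(A_{m1},\dots,A_{mL_m})$, and on the \emph{images} $\Pi_n[A_{mj}]$, the restriction $\Pi_n\!\upharpoonright\! A_{mj}$ is a uniformly random bijection onto its image. For a ``good'' $J_m$ (one with $L_m\asymp(\log\log n)^3$ and all $S(I_\ell,A_{mj})\neq\emptyset$) the points $y(I_1,A_{mj}),\dots,y(I_N,A_{mj})$ are then uniform without replacement in $A_{mj}$, so $e(\mathcal{I};A_{mj})$ is uniform on ordered pairs. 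Fixing a target sequence $e_1,\dots,e_{L_m}$ whose set is exactly $E(G)$, the probability that a single good $J_m$ realises $G$ is at least $(1/N^2)^{(1+\eps)(\log\log n)^3}=n^{-o(1)}$; by Corollary~\ref{cor:at_least_half} and Lemma~\ref{lem:S} at least $M/2$ of the $J_m$ are good with probability $1-e^{-n^{\Omega(1)}}$, and the conclusion follows. Note that ``exactness'' is handled automatically by asking that \emph{every} $A_{mj}$ hits its prescribed arc, so there is no separate issue of suppressing spurious arcs.
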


\begin{proof}
    There are no more than $10 \log\log n \cdot n^{20 \log\log n} = e^{ O\left( \log n \cdot \log\log n\right) }$ many ways to pick $I_1, \dots, I_N$, and at most $2^{ O((\log\log n)^2) }$ corresponding directed graphs $G$ for each such a sequence. We will show that for each particular choice of $I_1,\dots, I_N$ and $G$ with $v(G)=N$, the probability that no $J$ exists as desired is at most $e^{-n^{\Omega(1)}}$. This will prove the lemma.

    Let us thus fix $I_1,\dots, I_N$, meeting the conditions of the lemma but otherwise arbitrary, and an arbitrary directed graph $G$ on $N$ vertices. Note that if $G$ is the ``empty directed graph'' ($e(G) = 0$) then we can simply take $J=\{1\}$. Thus, we can and do assume from now on that $G$ has at least one arc.

    Inside the set $[n] \setminus (I_1\cup\dots\cup I_N)$ we can find $M = n^{\Omega(1)}$ disjoint intervals
    $J_1, \dots, J_M$ each of size $s := \lceil n^{1-1/k} \cdot \left(\log\log n\right)^{3/k}\rceil$.

    For $\tau : I_1 \cup \dots \cup I_N \to [n]$ an injection and $L_1,\dots,L_M \geq 0$ and
    $A_{11},\dots,A_{1L_1} \subseteq J_1$, $\dots$, $A_{M1}, \dots, A_{ML_M} \subseteq J_M$ disjoint intervals, and $B_{11}, \dots, B_{1L_1}, \dots, B_{ML_M} \subseteq [n] \setminus \tau[ I_1\cup\dots\cup I_N ]$ disjoint sets with $|A_{ij}| = |B_{ij}|$ for all $i,j$, we define the event

    $$ F_{\tau,\underline{A},\underline{B}}
    := \left\{ \begin{array}{l} 
    \Ical_k(J_\ell) = (A_{\ell 1},\dots,A_{\ell L_\ell}) \text{ for all $\ell=1,\dots,M$, } \\
    \Pi_n[A_{\ell \ell'}] = B_{\ell \ell'} \text{ for all $\ell=1,\dots,M$ and $\ell'=1,\dots,L_{\ell}$, } \\
    \Pi_n(x) = \tau(x) \text{ for all $x \in I_1\cup\dots\cup I_n$.}
               \end{array} \right\}
    $$

    We observe that, conditional on $F_{\tau,\underline{A},\underline{B}}$, every bijection $A_{ij} \to B_{ij}$ is equally likely. That is, for every choice of bijections $\tau_{11} : A_{11} \to B_{11}, \dots, \tau_{ML_M} : A_{ML_M} \to B_{ML_M}$ we have

    $$ \bP\left( \text{$\Pi_n\upharpoonright A_{ij} \equiv \tau_{ij}$ for all $i,j$} {\big|} F_{\tau,\underline{A},\underline{B}} \right)
    = \frac{1}{|A_{11}|!\cdot\dots\cdot|A_{ML_M}|!}. $$

    Let us say an index $1\leq \ell \leq M$ is {\em good} if $(1-\eps)(\log\log n)^3 \leq L_\ell \leq (1+\eps)(\log\log n)^3$ and  
    $|S(I_i, J')| > 0$ for all $1\leq i\leq N$ and $J' \in \Ical_k(J_\ell)$. (So in particular $H( \Ical, \Ical_k(J_\ell) )$ is defined when $\ell$ is good.) Note also that whether $\ell$ is good or not is determined completely by $F_{\tau,\underline{A},\underline{B}}$, because the sets $S(I_i, A_{\ell j})$ depend only on $B_{\ell j}$, not on the choice of the specific bijection $A_{\ell j} \to B_{\ell j}$.

    Note that if $\ell$ is good and we randomly choose bijections $\tau_{i\ell} : A_{i\ell} \to B_{i\ell}$ then the points $y(I_1,A_{i\ell}), \dots, y(I_N,A_{i\ell})$ are chosen uniformly without replacement from $A_{i\ell}$. In particular, defining $P \be \{(I_i,I_j) \mid  i\neq j\}$, for each ordered pair $uv \in P$ the probability that $e( \Ical, A_{i \ell } ) = uv$ is $\frac{1}{|P|} = \frac{1}{N(N-1)}$.

    We fix a sequence $e_1,e_2,\dots,e_{\lceil(1+\eps)(\log\log n)^3\rceil} \in  P$ with the property that 

    $$ E(G) = \{ e_1,\dots,e_{\lceil(1-\eps)(\log\log n)^3\rceil}\} = \{e_1,\dots,e_{\lceil(1+\eps)(\log\log n)^3\rceil}\}. $$

    \noindent
    (Since $(1-\eps)(\log\log n)^3 > N^2$, there are necessarily some duplicate arcs. 
    This will not be a problem.) 
    We note that $H( \Ical, (A_{\ell 1}, \dots, A_{\ell L_{\ell}} ) ) = G$ is implied by the 
    event $\{ e( \Ical, A_{\ell i} ) = e_i \text{ for all $1\leq i \leq L_i$} \}$.

    For notational convenience, let us write 

    $$ E := \{ H(\Ical,\Ical_k(J_\ell)) = G \text{ for some $1\leq \ell \leq M$ } \}. $$

    It follows from the previous observations that if $Z$ denotes the number of good indices, then the value of $Z$ is completely determined by the event $F_{\tau,\underline{A},\underline{B}}$, and 

    $$ \begin{array}{rcl} 
        \bP\left( E | F_{\tau,\underline{A},\underline{B}} \right) 
        & \geq & \displaystyle 
        1 - \left( 1 - \left(\frac{1}{N^2}\right)^{(1+\eps)(\log\log n)^3} \right)^Z \\[2ex]
        & \geq & \displaystyle 
        1 - \exp\left[ - Z \cdot \left(\frac{1}{N^2}\right)^{(1+\eps)(\log\log n)^3} \right] \\[3ex]
        & = & \displaystyle 
        1 - \exp\left[ - Z\cdot n^{o(1)} \right]. 
    \end{array}$$

        \noindent
        By Corollary~\ref{cor:at_least_half} and Lemma~\ref{lem:S} we have that 

        $$ \bP( Z < M/2 ) = e^{-n^{\Omega(1)}}. $$
        It follows that 

    $$ \begin{array}{rcl} \bP( E )
        & \geq & \displaystyle  
        \sum_{\tau,\underline{A},\underline{B}} 
        \bP\left(E | F_{\tau,\underline{A},\underline{B}} \right)  \cdot \bP( F_{\tau,\underline{A},\underline{B}} ) \\
        & \geq & \displaystyle 
        1 - \exp[ - (M/2)\cdot n^{o(1)} ] - \bP( Z < M/2 ) \\
        & = & 1 - e^{-n^{\Omega(1)}}.
    \end{array} $$
    This proves the statement of the lemma.
\end{proof}

\begin{proof}[Proof of Lemma~\ref{lem:graph_exists}]
    Let $I_1,\ldots, I_N$ be $\ceil{n^{1/2k}}$ intervals of length $\lfloor  n^{1-1/k} \cdot \left ( \log \log n \right)^{3/k} \rfloor$. By Corollary~\ref{cor:at_least_half}, with probability $1-e^{-\Omega(n^{1/2k})}$ at least $1$ of them satisfies Conditions~\ref{itm:vtexlem1}-\ref{itm:vtexlem4} of Lemma~\ref{lem:vtexlem}, by relabeling if necessary, we can assume that $I_1$ satisfies these conditions. We have $\cI_k(I_1) \gg \log \log n$ by Condition~\ref{itm:vtexlem1}. By Lemma \ref{lem:shrink_CkI} we may 
    shrink $I_1$ by iteratively removing an element until $\log\log n < |\cI_k(I_1)| < 2 \log \log n$, ensuring 
    that $I_1$ still satisfies Conditions~\ref{itm:vtexlem2}-\ref{itm:vtexlem4} of 
    Lemma~\ref{lem:vtexlem}. 
    In particular $|I'| > n^{.51}$ for all $I' \in \cI_k(I_1)$, and by $|I_1|\leq n^{1-1/2k}$ we also have $I' \leq n^{1-1/2k}$ for all $I  ' \in \cI_k(I_1)$. 

    By Lemma~\ref{lem:exists_J}, for all directed graphs $G$ with $V(G)=\cI_k(I_1)$ there is an interval $J$ satisfying $H( \cI_k(I_1),\cI_k(J)) = G$, with probability $1-e^{-n^{\Omega(1)}}$.
\end{proof}

\begin{proof}[Proof of Lemma~\ref{lem:matching}]
    Let $I_1,I_2$ be any two disjoint intervals such that $0 < |\cI_k(I_1)| <   \log \log n < |\cI_k(I_2)| < 2\log\log n$, and such that $w_{k+1}(I_1)=w_{k+1}(I_2) = 0$ and $w_{k-1}(I') >0 $ for all $I\in \cI_k(I_1)\cup \cI_k(I_2)$. Each of these $I' \in \cI_k(I_1)\cup \cI_k(I_2)$ has length at most $ \max\{ |I_1|,|I_2|\}$. By Lemma~\ref{lem:positive_w} with $\epsilon = \frac{1}{2(k+1)}$
    \begin{equation}
        \bP \left( w_{k+1} (I') > 0 \text{ for all intervals }I' \text{ s.t. }|I'| > n^{1- \frac{1}{2(k+1)}} \right) \geq 1 - e^{-\Omega(n^{1/2})}.
    \end{equation}
    So with at least this probability, all $I_1$ and $I_2$ as described satisfy $|I_1|,|I_2| \leq  n^{1 - \frac{1}{2(k+1)}}$. By $w_{k-1}(I') >0 $ for all $I'\in \cI_k(I_1)\cup \cI_k(I_2)$ and Lemma~\ref{lem:zero_w} we have $|I'| \geq n^{1-1000/{(k-1)}}$ for all such $I_1,I_2$ and $I'\in \cI_k(I_1)\cup \cI_k(I_2)$ with probability $1- O(n^{-998})$. So for sufficiently large $k$ we have 
    \begin{equation}
         n^{\frac12 + \frac{1}{2(k+1)} }<n^{1-1000/(k-1)} \leq |I'| < n^{1 - \frac{1}{2(k+1)}}.
    \end{equation}
    The result then follows by $|\cI_k(I_1)\cup \cI_k(I_2)| \leq 3 \log\log n$ and Lemma~\ref{lem:exists_J}.
\end{proof}

\section{Proof of Theorem~\ref{thm:toto}~Part~\ref{part:toto_non_convergence}\label{sec:totoallq}}

Here we extend the proof of the case when $q=1$ to $|1-q|  < 1/\log^* n$. 
We do so step by step, first extending to $|1-q| \leq n^{-4}$ then extending to
$|1-q|=O(1/n)$ and finally to the entire range.

\subsection{Extending the $q=1$ case to $q=1\pm n^{-4}$}

Here we derive the following from Proposition~\ref{prop:oscilate}.

\begin{prop}\label{prop:n4}
There exists a $\phi\in {\TOTO}$ such that for all  sequences
$q=q(n)$ satisfying $1-n^{-4} < q < 1+n^{-4}$ and
all sufficiently large $n$ satisfying $ W^{(2)}(\log^{**}\log^{**}n-1)<\log\log n$ we have
\begin{equation}
    \bP( \Pi_n \models \phi) = 
            \begin{cases}
            1 - O( n^{-2} ) & \text{if } \log^{**}\log^{**}n \text{ is even}, \\
            O( n^{-2} ) &  \text{if } \log^{**}\log^{**}n \text{ is odd}.
        \end{cases},  
\end{equation}
where $O(n^{-2})$ terms can be taken uniform over all sequences $q(n)$ considered. 
\end{prop}

\noindent
(Again Lemma~\ref{lem:composed_large_functions} ensures that $W^{(2)}(\log^{**}\log^{**}n-1)<\log\log n$ is 
for instance satisfied for all 
$n$ of the form $n=W^{(2)}(m)$ with $m>3$.)

Proposition~\ref{prop:n4} will follow by showing that for $1-n^{-4} < q < 1$, a $\Mallows(n,q)$ distribution 
is in some sense very close to the uniform distribution over $S_n$.

\begin{lem}\label{lem:trunc_unif_close}
     Let $q=q(n)$ satisfy $1-n^{-4} < q < 1$. 
    For $1\leq i \leq n $ let $Z_i \sim \TGeo(n-i+1, 1-q)$ and $X_i\sim \Unif([n-i+1])$. Then 
    \begin{equation}
        \dtv(Z_i,X_i) \leq 4 \cdot n^{-3},
    \end{equation}
    for $n$ sufficiently large and all $1\leq i \leq n$. 
\end{lem}

\begin{proof}
    Let us write $m \be n-i+1$ and $p_j := \Pee( Z_i=j )$. By \eqref{eq:dtvalt} we may write
    \begin{align}\label{eq:rgkjerng}
        \dtv(Z_i,X_i) = 
        \sum_{ j : p_j > 1/m } \left(p_j - \frac{1}{m}\right).
    \end{align}
    Now note that 
    
    $$ \frac{\max_j p_j}{\min_j p_j} = q^{-(m-1)} < \left(\frac{1}{1-n^{-4}}\right)^{m-1}
    < \left(\frac{1}{1-n^{-4}}\right)^n \leq \left(1+2n^{-4}\right)^n
    \leq e^{2n^{-3}} \leq 1+ 4n^{-3}, $$
    
    \noindent
    where the third inequality holds for $n$ sufficiently large; the fourth inequality uses 
    $1+x \leq e^x$, and; the last inequality holds for $n$ sufficiently large, and uses the Taylor expansion for $e^x$.
    It follows that 
    
    $$ p_j \leq (1+4n^{-3}) \cdot \frac{1}{m}, $$
    
    \noindent
    for all $j$. Filling this in to~\eqref{eq:rgkjerng} gives $\dtv(Z_i,X_i) \leq 4n^{-3}$ as desired. 
\end{proof}

\begin{lem}\label{lem:n4}
    Let $q=q(n)$ satisfy $|1 - q | < n^{-4}$, and let $\Pi_n\sim  \Mallows(n,q)$ and $ \Pi^*_n \sim \Mallows(n,1) $. 
    There is a coupling of $\Pi_n$ and $\Pi^*_n$ such that 
    $\bP \left( \Pi_n \neq \Pi_n^* \right) \leq 4n^{-2}$, for all sufficiently large $n$.
\end{lem}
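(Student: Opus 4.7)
The plan is to use the truncated geometric sampling procedure of Section~\ref{subsec:constr_trunc} to build both $\Pi_n$ and $\Pi_n^*$ from sequences of independent random variables, and then couple these sequences coordinatewise using Lemma~\ref{lem:trunc_unif_close} and Lemma~\ref{lem:coupling}.

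More precisely, let $Z_1,\dots,Z_n$ be independent with $Z_i \sim \TGeo(n-i+1,1-q)$ and let $X_1,\dots,X_n$ be independent with $X_i \sim \Unif([n-i+1])$. By the construction in Section~\ref{subsec:constr_trunc}, the permutation $\Pi_n$ built from $(Z_1,\dots,Z_n)$ via the iterative rule~\eqref{eq:MallowsIter} is $\Mallows(n,q)$-distributed, while the permutation $\Pi_n^*$ built identically from $(X_1,\dots,X_n)$ is $\Mallows(n,1)$-distributed (uniform on $S_n$). Crucially, if $Z_i = X_i$ for every $i \in [n]$, then $\Pi_n = \Pi_n^*$ pointwise.

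If $q = 1$ there is nothing to prove, so assume $0 \neq |1-q| \leq cn^{-4}$. By Lemma~\ref{lem:trunc_unif_close}, for each $i \in [n]$ we have $\dtv(Z_i,X_i) = O(cn^{-3})$, where the implicit constant does not depend on $i$. By Lemma~\ref{lem:coupling}, for each $i$ there exists a coupling $(Z_i',X_i')$ of $Z_i$ and $X_i$ attaining the infimum, so that $\bP(Z_i' \neq X_i') = \dtv(Z_i,X_i) = O(cn^{-3})$. Taking the product of these couplings across $i \in [n]$ yields a joint distribution of $((Z_1',X_1'),\dots,(Z_n',X_n'))$ in which the marginals $(Z_1',\dots,Z_n')$ and $(X_1',\dots,X_n')$ are independent sequences with the correct distributions.

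Using this coupled pair of sequences to drive the iterative construction~\eqref{eq:MallowsIter} produces a joint distribution for $(\Pi_n,\Pi_n^*)$ with the correct marginals. The union bound then gives
\begin{equation}
\bP(\Pi_n \neq \Pi_n^*) \ \leq\ \bP\!\left(\exists\, i \in [n] : Z_i' \neq X_i'\right) \ \leq\ \sum_{i=1}^n \bP(Z_i' \neq X_i') \ =\ n \cdot O(cn^{-3}) \ =\ O(cn^{-2}),
\end{equation}
which is the claim. There is no substantive obstacle here; the proof is essentially a bookkeeping exercise that combines the coordinatewise total-variation estimate of Lemma~\ref{lem:trunc_unif_close} with the coupling characterisation of total variation distance.
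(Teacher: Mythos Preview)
Your proof is correct and follows essentially the same approach as the paper: both couple the truncated-geometric and uniform driving sequences coordinatewise via Lemma~\ref{lem:trunc_unif_close} and Lemma~\ref{lem:coupling}, then apply a union bound over the $n$ coordinates. The only cosmetic difference is that you spell out the product-coupling structure a bit more explicitly.
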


\begin{proof}
    If $q=1$, then the result is trivial. 
    We now assume that $1-n^{-4} < q < 1$. 

    Let $\Pi_n$ be generated by a sequence $Z_1,\ldots, Z_n$ of independent random variables 
    with $Z_i \sim \TGeo(n-i+1, 1-q)$ for $i\in [n]$ (see Section \ref{subsec:constr_trunc}). 
    Let $\Pi_n^*$ be generated in the same manner by a sequence $X_1,\ldots, X_n$ of independent 
    random variables where $X_i \sim \Unif ([n-i+1])$ for $i\in [n]$. 
    By Lemma~\ref{lem:trunc_unif_close} and~\ref{lem:coupling} we may couple $Z_i$ and $X_i$ for each $i$ such that 
    \begin{equation}
        \bP \left( Z_i \neq X_i \right) \leq 4n^{-3},\quad\text{as }n\to\infty. 
    \end{equation}
    This gives a coupling of $\Pi_n$ and $\Pi^*_n$. Now, $\Pi_n = \Pi^*_n$ if and only if $Z_i = X_i$ for all $i\in [n]$, giving
    \begin{equation}
        \bP \left( \Pi_n \neq \Pi^*_n \right) \leq 4n^{-2}. 
    \end{equation}
    
   Finally we consider the case when $1<q<1+n^{-4}$.
   Recall that if $r_n \in S_n$ is the ``reverse'' map given by $r_n(i) := n+1-i$, then 
   $r_n\circ \Pi_n \isd \Mallows(n,1/q)$.
   Note that $1 \geq 1/q  \geq 1 - n^{-4}$, so by the previous case there
   exists a coupling of $\Pi_n$ and $\Pi_n^{**} \sim \Mallows(n,1)$ such that 
   
   $$ \Pee( r_n\circ\Pi_n \neq \Pi_n^{**} ) \leq 4 n^{-2}. $$
   
   Setting $\Pi_n^* = r_n \circ \Pi_n^{**}$, we have 
   
   $$ \Pee( \Pi_n \neq \Pi_n^* )
   = \Pee( r_n\circ\Pi_n \neq r_n\circ \Pi_n^{*} ) 
   = \Pee( r_n\circ\Pi_n \neq \Pi_n^{**} ) \leq 4n^{-2}, $$
   
   \noindent
   where we use that $r_n\circ r_n$ is the identity.
   Finally we note that if $\Pi_n^{**}$ is uniformly distributed on $S_n$ then so is 
   $\Pi_n^* = r_n\circ\Pi_n^{**}$.
\end{proof}

\begin{proof}[Proof of Proposition~\ref{prop:n4}]
    Let $q$ satisfy $|1 - q| \leq n^{-4}$ and  let $\Pi_n\sim \Mallows(n,q)$ and $\Pi_n^* \sim \Mallows(n,1)$ be coupled 
    such that
    \begin{equation}\label{eq:rgkhebgrheg}
        \bP \left( \Pi_n \neq \Pi_n^* \right) = O(n^{-2}).
    \end{equation}
    This coupling exists by Lemma~\ref{lem:n4}. Let $\phi$ be the $\TOTO$ sentence from 
    Proposition~\ref{prop:oscilate}. Then $\Pi_n$ and $\Pi_n^*$ agree on $\phi$ with probability 
    $1 - O(n^{-2})$, giving the desired result.
\end{proof}

\subsection{Extending the $q=1\pm n^{-4}$ case to $q=1\pm O(1/n)$\label{sec:extend_to_1n}}

In this subsection we prove the following intermediate result.

\begin{prop}\label{prop:q1/n}
    There is a sentence $\psi \in \TOTO$ such that the following holds.
    Suppose that $q=q(n)$ satisfies $1-c/n<q<1+c/n$ for some constant $c>0$. 
    For all $n$ satisfying $W^{(2)}(\log^{**}\log^{**}n-1)<\log\log\log n$ we have
    \begin{equation}
         \bP( \Pi_n \models \psi) = 
        \begin{cases}
           1 - O(n^{-.001})& \text{if } \log^{**}\log^{**}n \text{ is even}, \\
           O(n^{-.001}) &  \text{if } \log^{**}\log^{**}n \text{ is odd},
        \end{cases}
    \end{equation}
    where the $O(n^{-.001})$ terms can be taken uniform over all 
    sequences considered (but may depend on $c$).
\end{prop}

\noindent
(Again, the condition $W^{(2)}(\log^{**}\log^{**}n-1)<\log\log\log n$ is satisfied if 
$n$ is of the form $n = W^{(2)}(m)$ with $m>3$, by 
Lemma~\ref{lem:composed_large_functions}.)

For $\pi \in S_n$ a permutation, we define

$$ \Psi(\pi) := \min\{ 1\leq j \leq n: \text{ there exist $1\leq i_1, i_2 \leq j$ such that 
$\pi(i_2)=\pi(i_1)+1$ }\}. $$

Letting $\Pi_n \sim \Mallows(n,q)$ as usual, we define:

$$ X_n := \Psi(\Pi_n). $$

\noindent
Before studying the behaviour of $X_n$, we first present a preparatory lemma.

\begin{lem}\label{lem:P_trunc_lt_j}
    Suppose that $q=q(n)$ satisfies $1-c/n  < q < 1$. There exist $c_U=c_U(c)>0$ and $c_L=c_L(c) >0$ such that 
    for $n$ sufficiently large and all $1\leq m\leq n$ and $A\subseteq [m]$ we have  
    \begin{equation}
         c_L \frac{|A|}{m} \leq  \bP \left( \TGeo (m, 1-q) \in A\right) \leq c_U \frac{|A|}{m}.
    \end{equation}
\end{lem}

\begin{proof}
We fix $n$ and some $m\in[n]$.
For convenience we again set $p_j := \Pee( Z_i = j )$.
We again have

$$ \frac{\max_j p_i}{\min_j p_j} = q^{-(m-1)} \leq \left(\frac{1}{1-cn^{-1}}\right)^n \leq 
(1+2cn^{-1})^n \leq e^{2c}, $$

\noindent
where the penultimate inequality holds for $n$ sufficiently large. Similarly

$$ \frac{\min_j p_j}{\max_j p_j} = q^{m-1} \geq (1-cn^{-1})^n \geq \frac12 e^{-c}, $$

\noindent
where the last inequality holds for $n$ sufficiently large.
We can conclude that 

$$ c_L \cdot \frac{1}{m} \leq p_j \leq c_U \cdot \frac{1}{m}, $$

\noindent
for all $j=1,\dots,m$, with $c_L := \frac12 e^{-c}, c_U := e^{2c}$.
The result follows.
\end{proof}

\begin{lem}\label{lem:bound_J1}
    Let $q=q(n)$ satisfy $1-cn^{-1} < q < 1+cn^{-1}$ for some $c>0$.
    For every $\epsilon > 0$ we have
    \begin{equation}\label{eq:asdskajdnsakj}
        \bP \left(  n^{1/2-\eps} \leq  X_n \leq n^{1/2+\eps}  \right) > 1 - n^{-\eps},
    \end{equation}
    provided that $n\geq n_0$ for some $n_0=n_0(c,\eps)$ depending on $\eps$ and $c$. 
\end{lem}

\begin{proof}
    We first assume $q\leq 1$.
    For $i\geq 2$, let $E_i$ denote the event that 
    $\Pi_n(i) \not\in\{ \Pi_n(1)-1,\Pi_n(1)+1,\dots,\Pi_n(i-1)-1,\Pi_n(i-1)+1\} =: A_i$.
    We point out that if $E_1,\dots,E_{i-1}$ hold, then $i-1\leq |A_i| \leq 2i$.
    Recalling the sampling procedure~\ref{eq:MallowsIter} for the Mallows model described in 
    Section~\ref{subsec:constr_trunc} and using the
    previous lemma, we find that for $i\geq 2$:
    
    $$ 1-c_U\frac{2i}{n-i+1} \leq \Pee( E_i | E_2 \cap \dots \cap E_{i-1} ) 
    \leq 1 - c_L \frac{i-1}{n-i+1}. $$
    
    \noindent
    Writing $x := \floor{n^{1/2+\eps}}$, we find that 
    
    $$ \begin{array}{rcl} 
    \Pee( X > x ) 
    & = & \Pee( E_2 \cap \dots \cap E_x ) \\
    & = & \Pee( E_2 ) \cdot \Pee( E_3 | E_2) \cdot \dots \cdot \Pee( E_x | E_2 \cap\dots\cap E_{x-1} ) \\
    & \leq & \prod_{i=2}^x \left(1 - c_L \frac{i-1}{n-i+1}\right) \\
    & \leq & \exp\left[ - c_L \sum_{i=2}^x \frac{i-1}{n-i+1} \right] \\
    & \leq & \exp\left[ - c_L \cdot (2/n) \cdot {x\choose 2} \right] 
    = \exp\left[ - \Omega( n^{2\eps} ) \right],  
       \end{array}    $$
 
 \noindent
 where the third line holds assuming $n$ is sufficiently large.
 
 Writing $y := \ceil{n^{1/2-\eps}}$, we find that 
 
 $$ \begin{array}{rcl} 
    \Pee( X < y ) 
    & = & \Pee( \overline{E_2} ) + \Pee(\overline{E_3}|E_2) + \dots + \Pee( \overline{E_y} | E_2\cap\dots\cap E_{y-1} ) \\
    & \leq & \sum_{i=2}^y c_U \frac{2i}{n-i+1} \leq \frac{c_U}{n} \sum_{i=1}^y i \\
    & =  & \frac{c_U}{n} \cdot {y+1\choose 2} = O( n^{-2\eps} ).  
       \end{array}    $$
       
Combining these bounds completes the proof for the case when $q\leq 1$.
For the case when $1 \leq q \leq 1+c/n$, we recall that 
$r_n \circ \Pi_n \isd \Mallows(n,1/q)$. 
We also note that $1 \geq 1/q \geq 1 - c n^{-1}$ and that 
$\Psi(\pi) = \Psi(r_n\circ\pi)$ for all $\pi \in S_n$, by definition of $\Psi$.
So the result for $1 \leq q \leq 1+c/n$ follows immediately from the result for $1-cn^{-1}\leq q \leq 1$.
\end{proof}

We next define

$$ Y_n := \Psi\left( \rk( \Pi_n(1),\dots,\Pi_n(X_n) ) \right), $$ 

$$ Z_n := \Psi\left( \rk( \Pi_n(1),\dots,\Pi_n(Y_n) ) \right). $$ 

The next observation will allow us to make use of Lemma~\ref{lem:bound_J1} also for $Y_n$ and $Z_n$.

\begin{lem}\label{lem:J2_monotone}
    For every $n\geq 3$, $2 \leq k\leq n-1$ and every $\pi \in S_n$ we have
    \begin{equation}\label{eq:J2_ineq}
        \Psi(\pi_k) \leq \Psi(\pi_{k+1}),
    \end{equation}
    where $\pi_j :=\rk(\pi(1),\ldots, \pi(j))$. 
\end{lem}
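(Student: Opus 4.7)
The plan is a direct set-theoretic argument. Recall $\pi_k = \rk(\pi(1),\ldots,\pi(k))$, so $\pi_k(i)$ is just the rank of $\pi(i)$ in the set $T_k \be \{\pi(1),\ldots,\pi(k)\}$. I would begin by rewriting the event ``$w_2([j],\pi_k)\ge 1$'' in a form that is easy to compare across $k$. Namely, $w_2([j],\pi_k)\ge 1$ iff there exist indices $i,\ell\in[j]$ with $\pi_k(\ell)=\pi_k(i)+1$, which, after unwrapping the rank function, is equivalent to saying that $\pi(i)<\pi(\ell)$ and
\[
T_k\cap\bigl(\pi(i),\pi(\ell)\bigr)=\emptyset,
\]
i.e.\ $\pi(\ell)$ is the immediate successor of $\pi(i)$ inside $T_k$.

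The key observation is that $T_{k+1}=T_k\cup\{\pi(k+1)\}\supseteq T_k$, so the condition ``$T_{k+1}\cap(\pi(i),\pi(\ell))=\emptyset$'' is strictly stronger than ``$T_k\cap(\pi(i),\pi(\ell))=\emptyset$''. Hence, for every $j$ with $2\le j\le k$, any witnessing pair $(i,\ell)\in[j]^2$ for $w_2([j],\pi_{k+1})\ge 1$ is automatically a witnessing pair for $w_2([j],\pi_k)\ge 1$. In other words,
\[
\{j\in[k]\mid w_2([j],\pi_{k+1})\ge 1\}\ \subseteq\ \{j\in[k]\mid w_2([j],\pi_k)\ge 1\}.
\]

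It remains to combine this inclusion with a case split on $j^\ast\be J_1(\pi_{k+1})$. Note that $j^\ast\in\{2,\ldots,k+1\}$ since $w_2([1],\sigma)=0$ for any $\sigma$. If $j^\ast\le k$, the inclusion above gives $w_2([j^\ast],\pi_k)\ge 1$, whence $J_1(\pi_k)\le j^\ast=J_1(\pi_{k+1})$. If instead $j^\ast=k+1$, then we use the trivial bound $J_1(\pi_k)\le k$, which holds because $w_2([k],\pi_k)=k-1\ge 1$ (all elements of $[k-1]$ have a successor in $[k]=\pi_k[[k]]$), so $J_1(\pi_k)\le k<k+1=j^\ast$. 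Either way \eqref{eq:J2_ineq} holds. There is no real obstacle here; the only thing to be careful about is the degenerate case $j^\ast=k+1$, which is handled by the universal bound $J_1(\pi_k)\le k$.
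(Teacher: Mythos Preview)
Your proof is correct and follows essentially the same approach as the paper. Both arguments reduce to the observation that if $\pi_{k+1}(a_0)=\pi_{k+1}(i)+1$ for some $i,a_0\in[a]$ with $a\le k$, then also $\pi_k(a_0)=\pi_k(i)+1$; the paper verifies this via the explicit identity $\pi_k(x)=\pi_{k+1}(x)-\ind{\{\pi(k+1)<\pi(x)\}}$ together with the equality of indicators $\ind{\{\pi(k+1)<\pi(i)\}}=\ind{\{\pi(k+1)<\pi(a_0)\}}$, whereas you phrase the same fact as the monotonicity $T_k\subseteq T_{k+1}$ applied to the gap condition $T_{k+1}\cap(\pi(i),\pi(\ell))=\emptyset$.
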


\begin{proof}
    This follows readily from the observation that, for all distinct $i_1,i_2 \leq k$,
    $\pi_{k+1}(i_2)=\pi_{k+1}(i_1)+1$ implies that 
    $\pi_k(i_2)=\pi_k(i_1)+1$.
\end{proof}

\begin{lem}\label{lem:bound_K1}
    Suppose that $q = q(n)$ satisfies $1-c/n < q < 1+c/n$ for some constant $c>0$. 
    Then
    \begin{equation}\label{eq:asdskajdnsakj}
        \bP \left( n^{.1} \leq Z_n \leq n^{.2}  \right) > 1 - n^{-.001},
    \end{equation}
    provided that $n\geq n_0$ for some $n_0=n_0(c)$ depending on  $c$. 
\end{lem}

\begin{proof}
    We fix $\delta>0$, to be determined during the course of the proof.
    For notational convenience, let us write 
    $x := \ceil{n^{1/2-\delta}}, y := \ceil{n^{1/4-\delta}}, z := \ceil{n^{1/8-\delta}}$.
    Recall that by Lemma~\ref{lem:prefix_mallows}, for each fixed $j$ we have 
    $\Pi_j^* := \rk(\Pi_n(1),\dots,\Pi_n(j)) \isd \Mallows(j,q)$.
    Using also the previous two lemmas and the definitions of $X_n,Y_n,Z_n$, we find:
    
    $$ \begin{array}{rcl} \Pee( Z < z ) 
    & \leq &  \Pee( X_n < x )+\Pee( \Psi(\Pi_x^*) < y ) + \Pee( \Psi(\Pi_y^*) < z ) \\
    & = &  \Pee( X_n < x )+\Pee( \Psi(\Pi_x) < y ) + \Pee( \Psi(\Pi_y) < z ) \\
    & = & \Pee( X_n < x ) + \Pee( X_x < y ) + \Pee( X_y < z ) \\
    & \leq & n^{-\delta} + x^{-\delta} + y^{-\delta} \\
    & \leq & n^{-\delta/8},       
    \end{array} $$

    \noindent
    where the fourth line holds for $n$ sufficiently large and we 
    use 
    
    $$\begin{array}{c} 
    x^{1/2-\delta} = (1+o(1)) n^{(1/2-\delta)^2} >  y = (1+o(1))n^{1/4-\delta},  \\
    y^{1/2-\delta} = (1+o(1)) n^{(1/2-\delta)^3} > z = (1+o(1))n^{1/8-\delta}, 
    \end{array} $$
    
    \noindent
    and the last line holds for $n$ sufficiently large.

    Analogously we have, 
    writing $x' := \floor{n^{1/2+\delta}}, y' := \floor{n^{1/4+2\delta}}, z := \floor{n^{1/8+4\delta}}$:
    
    $$ \begin{array}{rcl} \Pee( Z > z' ) 
    & \leq & \Pee( X_n > x' ) + \Pee( X_{x'} > y' ) + \Pee( X_{y'} > z' ) \\
    & \leq & n^{-\delta} + {x'}^{-\delta} + {y'}^{-\delta} \\
    & \leq & n^{-\delta/8},       
    \end{array} $$
    
    \noindent
    for $n$ sufficiently large.
    
    Setting $\delta = .025$ and combining the bounds gives the result, with some room to spare.
\end{proof}

We are almost ready to prove Proposition~\ref{prop:q1/n}. 
But, we first point out that it is possible to express that $i \leq Z_n$ in $\TOTO$.

\begin{lem}\label{lem:varphiZ}
There is a formula $\varphi_Z(x)$ in $\TOTO$ with one free variable such that 
for every $n\in\eN$, every permutation $\pi \in S_n$ and every $1\leq i \leq n$:

$$ (\pi,i) \models \varphi_Z(x) \text{ if and only if } i \leq Z_n(\pi). $$

\end{lem}

\begin{proof} We first show there is a formula $\varphi_X(x)$ that expresses that $i \leq \Psi(\pi)$
if $\pi \in S_n, i \in [n]$ for some $n\in\eN$.
This formula is given by 

$$ \forall y,z : \left((y <_1 x ) \wedge (z <_1 x )\right) 
\implies \left(\neg\succ_2(y,z) \wedge \neg\succ_2(z,y) \right). $$
 
\noindent
(``For every pair $y,z < x$ we have $|\pi(y)-\pi(z)|\neq 1$''.)
Now let $\varphi_Y := \varphi_X^{\varphi_X}$, the relativization of
$\varphi_X$ to $\varphi_X$, as provided by Lemma~\ref{lem:relativization}.
Then, invoking that lemma, $(\pi,i) \models \varphi_Y(x)$ if and only if $i \leq \Psi(\pi)$ and
$(\sigma, i) \models \varphi_X(x)$ where $\sigma := \rk(\pi(1),\dots,\pi(\Psi(\pi)) )$.
In other words, $(\pi,i) \models \varphi_Y(x)$ if and only if $i \leq Y_n(\pi)$.

Similarly, if we now set $\varphi_Z := \varphi_X^{\varphi_Y}$ then
$(\pi,i) \models \varphi_Z(x)$ if and only if $i \leq Z_n(\pi)$.
\end{proof}

\begin{proofof}{Proposition~\ref{prop:q1/n}}
    Let $\phi$ be the sentence provided by Proposition~\ref{prop:n4} and let $\varphi_Z$ be the 
    formula provided by Lemma~\ref{lem:varphiZ}. 
    Then the relativization $\psi := \varphi^{\varphi_Z}$ (as provided by Lemma~\ref{lem:relativization})
    expresses that $\phi$ is satisfied by $\rk(\Pi_n(1),\dots,\Pi_n(Z_n))$.
    
    Now let $n$ be such that $W^{(2)}(\log^{**}\log^{**}n-1)<\log\log\log n$. 
    First assume that $\log^{**}\log^{**}n$ is even.
    We have, writing $\Pi_j^* := \rk(\Pi_n(1),\dots,\Pi_n(j))$: 
    
    $$ \begin{array}{rcl} 
    \Pee( \Pi_n \models \psi ) 
    & \geq & \Pee( \Pi_m^* \models \varphi \text{ for all 
    $n^{.1}\leq m\leq n^{.2}$}) - \Pee( Z_n \not\in [n^{.1},n^{.2}] ) \\[2ex]
    & \geq & \displaystyle 1 - \sum_{n^{.1}\leq m \leq n^{.2}} \Pee( \Pi_m^*\not\models \varphi ) 
    - \Pee( Z_n \not\in [n^{.1},n^{.2}] ) \\[2ex]
    & = & \displaystyle 1 - \sum_{n^{.1}\leq m \leq n^{.2}} O( m^{-100} ) - O( n^{-.001} ) \\[3ex]
    & = & 1 - O( n^{-.001} ), 
    \end{array} $$

    \noindent 
    where we use Lemma~\ref{lem:prefix_mallows} and Proposition~\ref{prop:n4} together with the fact 
    that, for $n$ sufficiently large and 
    all $n^{.1}\leq m\leq n^{.2}$,
    we have 
    
    $$\log\log m > \log\log\log n > W^{(2)}(\log^{**}\log^{**}n-1) \geq W^{(2)}(\log^{**}\log^{**}m-1). $$
    
    Now assume that $\log^{**}\log^{**}n$ is odd. In this case we have
    
    $$ \begin{array}{rcl} 
    \Pee( \Pi_n \models \psi ) 
    & \leq & \displaystyle \sum_{n^{.1}\leq m\leq n^{.2}} \Pee( \Pi_m^* \models \varphi ) + 
    \Pee( Z_n \not\in [n^{.1},n^{.2}] ) \\[3ex]
    & \leq & \displaystyle \sum_{n^{.1}\leq m \leq n^{.2}} O( m^{-100} ) + O( n^{-.001} ) \\[3ex]
    & = & O( n^{-.001} ), 
    \end{array} $$
    
    \noindent
    with the same justifications as previously.
\end{proofof}

\subsection{Lifting to $q =  1 \pm 1/\log^*n$}

In this section we finalize the proof of Theorem~\ref{thm:toto}~Part~\ref{part:toto_non_convergence}. 
Specifically, we will prove the following more detailed version of Theorem~\ref{thm:toto}~Part~\ref{part:toto_non_convergence}.

\begin{prop}\label{prop:final}
There exists a sentence $\xi \in\TOTO$ such that, for all sequences $q=q(n)$ satisfying 
$1-1/\log^* n < q < 1+1/\log^* n$ and all $n$ satisfying $W^{(2)}(\log^{**}\log^{**}n-1) < \log^*\log^*\log^* n$
we have

$$ \Pee( \Pi_n \models \xi ) = \begin{cases}
                                   1 - o(1) & \text{ if $\log^{**}\log^{**}n$ is even, } \\
                                   o(1) & \text{ if $\log^{**}\log^{**}n$ is odd, }
                                  \end{cases}, $$
 \noindent 
 where the error terms $o(1)$ can be taken uniform over all sequences $q(n)$ considered.
\end{prop}

\noindent
(Again, we note the condition $W^{(2)}(\log^{**}\log^{**}n-1) < \log^*\log^*\log^* n$ is for instance 
satisfied for all $n$ of the form 
$n=W^{(2)}(m)$ with $m>3$, by Lemma~\ref{lem:composed_large_functions}.)

The quantity $\Pi_n^{-1}(1)$ will play an important role in the definition of the $\TOTO$ property $\xi$.
We start with some preparatory lemmas on its behaviour.

\begin{lem}\label{lem:large_alpha}
For every $c, \eps > 0$ there exist constants $c_1=c_1(c,\eps),c_2=c_2(c,\eps)>0$ and 
$n_0=n_0(c,\eps)$ such that for $n\geq n_0$ and any sequence $q=q(n)$ satisfying $ 1 - 1/ \log^* n  < q  < 1 - c/n$, 
the random permutation $\Pi_n\sim \Mallows(n,q)$ satisfies
    \begin{align}
        \bP \left( \frac{c_1}{1-q} \leq  \Pi_n^{-1}(1) \leq \frac{c_2}{1-q}\right) \label{eq:rgkhherghr1} > 1-\eps.
    \end{align}
\end{lem}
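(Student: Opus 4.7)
The plan is to reduce everything to the tail behaviour of a single truncated geometric random variable. By~\eqref{eq:inversions_of_inverse}, $\Pi_n^{-1}\isd\Pi_n\sim\Mallows(n,q)$, so $\Pi_n^{-1}(1)\isd\Pi_n(1)$, and by the sampling scheme of Section~\ref{subsec:constr_trunc} the latter is distributed as $Z\sim\TGeo(n,1-q)$. Writing $\alpha\be 1-q$, the hypothesis reads $100/n<\alpha<1/\log^*n$, and it suffices to find $0<c_1<c_2$ such that $\bP\!\bigl(c_1/\alpha\leq Z\leq c_2/\alpha\bigr)>0.99$ for all $n$ larger than some $n_0$.

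Two preliminary observations set up the estimates. First, $\alpha n>100$ gives $q^n=(1-\alpha)^n\leq e^{-\alpha n}\leq e^{-100}$, so the denominator appearing in the truncated geometric mass function satisfies $1-q^n\geq 1-e^{-100}$ and is essentially harmless. Second, since $1/\log^*n\to 0$, for $n\geq n_0$ we may restrict to $\alpha\leq 1/2$, which is all the analytic estimates below require.

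For the upper tail, $\bP(Z\geq k)\leq q^{k-1}/(1-q^n)$, and $\log(1-\alpha)\leq -\alpha$ gives $(1-\alpha)^{c_2/\alpha}\leq e^{-c_2}$, hence
\[
\bP\!\left(Z>\tfrac{c_2}{\alpha}\right)\ \leq\ \frac{e^{-c_2}/(1-\alpha)}{1-e^{-100}}\ \leq\ \frac{2e^{-c_2}}{1-e^{-100}}.
\]
For the lower tail, $\bP(Z\leq k)=(1-q^k)/(1-q^n)$ combines with the elementary estimate $-\log(1-\alpha)\leq\alpha+\alpha^2$ valid on $[0,1/2]$ to yield $(1-\alpha)^{c_1/\alpha}\geq e^{-c_1(1+\alpha)}\geq e^{-3c_1/2}$, so
\[
\bP\!\left(Z<\tfrac{c_1}{\alpha}\right)\ \leq\ \frac{1-e^{-3c_1/2}}{1-e^{-100}}.
\]
Choosing, say, $c_1=10^{-3}$ and $c_2=\log 10^{3}$ makes each right-hand side smaller than $5\cdot 10^{-3}$, and the two bounds together give the claimed probability~$>0.99$. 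Rounding $c_i/\alpha$ to an integer shifts these probabilities by at most a single point mass $\bP(Z=k)=\alpha(1-\alpha)^{k-1}/(1-q^n)\leq \alpha/(1-e^{-100})=o(1)$, which is absorbed into the slack.

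There is no genuine obstacle: the whole argument is a tail computation for a truncated geometric. The one point worth watching is uniformity over the full range $\alpha\in(100/n,1/\log^*n)$, but this comes for free because the bounds above are monotone in $\alpha$ and the upper endpoint $1/\log^*n$ tends to $0$, so the condition $\alpha\leq 1/2$ used in the Taylor step holds for all $n\geq n_0$.
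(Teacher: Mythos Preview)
Your argument is correct. The reduction to $Z\sim\TGeo(n,1-q)$ via $\Pi_n^{-1}\isd\Pi_n$ is the same as in the paper, and your two tail estimates are valid: the lower tail uses the exact formula $\bP(Z\leq k)=(1-q^k)/(1-q^n)$ together with $-\log(1-\alpha)\leq\alpha+\alpha^2$ on $[0,1/2]$, and the upper tail uses $\bP(Z\geq k)\leq q^{k-1}/(1-q^n)$ with $(1-\alpha)^{c_2/\alpha}\leq e^{-c_2}$. The numerical choices $c_1=10^{-3}$, $c_2=\log 10^3$ give the required margin, and the rounding remark is fine since a single point mass is $O(\alpha)=o(1)$.

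The route differs from the paper's in one respect. For the lower tail the paper uses the cruder bound $\sum_{i=1}^{\lfloor c_1/\alpha\rfloor} q^{i-1}\leq c_1/\alpha$ to get $\bP(Z<c_1/\alpha)\leq c_1/(1-q^n)$ directly, avoiding the Taylor estimate on $\log(1-\alpha)$; this is marginally simpler than your version. For the upper tail, however, the paper does not compute the geometric tail at all but instead invokes Theorem~\ref{thm:bhat_peled} (Bhatnagar--Peled) to bound $\bE(\Pi_n^{-1}(1)-1)\leq 2q/(1-q)$ and then applies Markov's inequality. Your direct tail computation is more elementary and self-contained, since it does not appeal to an external displacement bound; the paper's version is shorter if one is willing to quote that result.
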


\begin{proof}
Recall from Section~\ref{subsec:useful_lit} that if $\Pi_n \sim \Mallows(n,q)$ then also $\Pi_n^{-1} \isd \Pi_n$.
In particular $\Pi_n^{-1}(1)\isd \Pi_n(1) \isd \TGeo(n,1-q)$,
where the last distributional equality follows from the sampling algorithm presented in 
Section~\ref{subsec:constr_trunc}.
Writing $x_1 := \floor{c_1/(1-q)}$, we first note that 
provided the constant $c_1>0$ is chosen sufficiently small, we have
$x_1 < (c_1/c) \cdot n \leq n$. 
We can thus write:

$$ \Pee\left( \Pi_n^{-1}(1) \leq \frac{c_1}{1-q}\right)
= \sum_{i=1}^{x_1} \frac{q^{i-1}(1-q)}{1-q^n}
= \frac{1-q^{x_1}}{1-q^n} 
< \frac{1-q^{x_1}}{1-(1-c/n)^n}
< \frac{1-q^{x_1}}{1-e^{-c}}. $$

\noindent
Now we note that 

$$q^{x_1} = \left(1-(1-q)\right)^{x_1} \geq 1 - x_1\cdot(1-q)  \geq 1-c_1. $$

\noindent
It follows that 

$$ \Pee\left( \Pi_n^{-1}(1) \leq \frac{c_1}{1-q}\right) \leq \frac{c_1}{1-e^{-c}} < \eps/2, $$

\noindent 
provided we chose the constant $c_1>0$ sufficiently small.

Similarly, writing $x_2 := c_2/(1-q)$ we have 

$$ \Pee( \Pi_n^{-1}(1) > x_2 ) < \frac{1}{1-\left(1-c/n\right)^{n}} \cdot q^{x_2}
\leq \frac{1}{1-e^{-c}} \cdot \left(1-(1-q)\right)^{c_2/(1-q)} \leq \frac{e^{-c_2}}{1-e^{-c}} < \eps/2, $$

\noindent
where the last inequality holds
provided we chose $c_2$ sufficiently large. The result follows combining the two bounds. 
\end{proof}

\begin{lem}\label{lem:Pi1bdd}
 For every $c,\eps>0$ there exist $\delta=\delta(c,\eps),n_0=n_0(c,\eps)$ such that for all sequences $q=q(n)$ with 
 $1-c/n\leq q \leq 1+c/n$ and all $n\geq n_0$:
 
 $$ \Pee\left( \delta n \leq \Pi_n^{-1}(1) \leq (1-\delta) n \right) > 1-\eps. $$
 
\end{lem}

\begin{proof}
First note that in the special case when $q=1$, we have that $\Pi_n^{-1}(1)$ is 
chosen uniformly at random from $[n]$. So any $\delta < \eps/2$ would work in this case.
It therefore suffices to consider $q\neq 1$ so long as we ensure the value of $\delta$ we 
end up selecting in the end is $<\eps/2$. (Which is easily achieved by adjusting the choice of 
$\delta$ in the end.)

Next, we consider the case when $1-c/n\leq q < 1$.
Again we will use that $\Pi_n^{-1}(1) \sim \TGeo(n,1-q)$.
With $c_U=c_U(c)$ as provided by Lemma~\ref{lem:P_trunc_lt_j}, we can write:

$$ \Pee\left( \delta n \leq \Pi_n^{-1}(1) \leq (1-\delta) n \right) 
\geq 1 - c_U \cdot 2\delta > 1 - \eps, $$

\noindent
provided we chose $\delta$ small enough.

It remains to consider $1<q<1+c/n$. 
We recall that
$\Pi_n^{-1} \isd \Pi_n \isd \Mallows(n,q)$ and hence $r_n \circ \Pi_n^{-1} \isd \Mallows(n,1/q)$.
But then $1/(1+c/n) > 1-c/n$, together with the previous case gives

$$ \begin{array}{rcl} \Pee\left( \delta n \leq \Pi_n^{-1}(1) \leq (1-\delta) n \right) 
& = & \Pee\left( n+1-(1-\delta) n \leq (r_n \circ \Pi_n^{-1})(1) \leq n+1-\delta n \right) \\
& \geq & \Pee\left( 2\delta \leq (r_n\circ\Pi_n^{-1})(1) \leq (1-2\delta)n \right)  \\
& > & 1 - \eps, \end{array} $$

\noindent
where the first inequality holds for $n$ sufficiently large, and the last inequality
by the previous case and adjusting the value of $\delta$.
\end{proof}

For the remainder of the section, we let $\Sigma_n$ denote the 
permutation on $\{1,\dots,\Pi_n^{-1}(1)-1\}$ induced by $\Pi_n$. (If $\Pi_n(1) = 1$,
then $\Sigma_n$ is undefined.)
In other words, we set 

$$ \Sigma_n := \rk( \Pi_n(1),\dots,\Pi_n(\Pi_n^{-1}(1)-1) ). $$

 \begin{lem}\label{lem:sigmacond}
 For $0<q<1$, $n\geq 2$ and $2\leq k \leq n$ we have 
 
 $$ \left( \Sigma_n \Big| \Pi_n^{-1}(1) = k \right) \isd \Mallows(k-1,q). $$
 
 \end{lem}
 
 \begin{proof}
 Fix an arbitrary $0<q<1,n\geq 2$ and $2\leq k \leq n$.
 Let $Z_1,\dots,Z_n$ be the random variables used in the sampling algorithm from Section~\ref{subsec:constr_trunc}.
 In particular, they're independent with $Z_i \sim\TGeo(n+1-i,1-q)$.
 We first observe that
 
 \begin{equation}\label{eq:star} 
 \{ \Pi_n^{-1}(1)=k \} = \{ \Pi_n(k)=1 \} 
 = \{ Z_1 > 1, \dots, Z_{k-1} > 1, Z_k=1 \}, \end{equation}
 
 \noindent
 We set $W_i := Z_i - 1$ for $i<k$ and $W_i=Z_{i+1}$ for $k \leq i \leq n-1$.
 Next, we claim that conditional on the event $\Pi_n^{-1}(1)=k$ the random variables $W_1,\dots,W_{n-1}$ are independent
 with $W_i \sim \TGeo(n-i,1-q)$. That they are independent (conditional on $\Pi_n^{-1}(1)=k$) should be obvious
 from~\eqref{eq:star} and the independence of $Z_1,\dots,Z_n$.
 Their distribution is also obvious for $i\geq k$. For $i<k$ the claim follows from 
 
 $$ \begin{array}{rcl} \Pee( W_i = w |  \Pi_n^{-1}(1)=k ) & = & \Pee( Z_i = w+1 | Z_i > 1 ) \\[2ex]
 & = & \displaystyle \left(\frac{q^{w}(1-q)}{1-q^{n+1-i}}\right) / \left(\frac{q-q^{n+1-i}}{1-q^{n+1-i}}\right) \\[2ex]
 & = & \displaystyle \frac{q^{w-1}(1-q)}{1-q^{n-i}}. 
 \end{array} $$

 \noindent
Conditional on $\Pi_n^{-1}(1)=k$, we can define $\Sigma_n^* \in S_{n-1}$ by 
 
 $$ \Sigma_n^*(i) := \begin{cases} \Pi_n(i) - 1 & \text{ if $1\leq i\leq k-1$, } \\
                 \Pi_n(i+1)-1 & \text{ if $k \leq i\leq n-1$. }
                \end{cases}
 $$
 
 \noindent
 Now observe that, still conditional on $\Pi_n^{-1}(1)=k$, we have 
 
 $$ \Sigma_n^*(i) = \begin{cases} W_1 & \text{ if $i=1$, } \\
                \text{ the $W_i$-th element of $[n-1] \setminus \{W_1,\dots,W_{i-1}\}$} & \text{ otherwise.}
               \end{cases},
 $$
 
 \noindent
 It follows that 
 
 $$\left(\Sigma_n^* \Big|  \Pi_n^{-1}(1)=k \right) \isd \Mallows(n-1,q). $$
 
 This gives
 
 $$ \begin{array}{rcl}
    \left(\Sigma_n{\Big|} \Pi_n^{-1}(1)=k\right) 
     & = & \left(\rk(\Pi_{n}(1),\dots,\Pi_{n}(k-1) ) {\Big|}  \Pi_n^{-1}(1)=k \right) \\[2ex]
     & = & \left(\rk(\Sigma_n^*(1),\dots,\Sigma_n^*(k-1) ) {\Big|}  \Pi_n^{-1}(1)=k \right) \\[2ex]
     & \isd & \Mallows(k-1,q),
    \end{array}
$$ 

\noindent
applying Lemma~\ref{lem:prefix_mallows} in the last line. 
 \end{proof}

 We next establish that $i < \Pi_n^{-1}(1)$ is in fact expressible in $\TOTO$.
 This is a seemingly innocuous statement.
 As a side remark, let us mention that it is not possible to express
 that $i < \Pi_n(1)$ in $\TOTO$
 (since it is not relevant for the proofs in this paper, we do not offer a formal proof).
 This is the reason we use $\Pi_n^{-1}(1)$ and not $\Pi_n(1)$ to define $\Sigma_n$ above.
 
 \begin{lem}\label{lem:chi}
 There is a $\TOTO$ formula $\chi$ with one free variable, such that 
 for ever $n\in\eN$, every permutation $\pi\in S_n$ and every $1\leq i \leq n$ we have 
 
 $$ (\pi,i) \models \chi(x) \text{ if and only if } i < \pi^{-1}(1). $$
 
 \end{lem}

 \begin{proof} The formula $\chi$ can be written as:
 
 $$ \exists y : (x <_1 y ) \wedge \neg \left(\exists z : z <_2 y \right). $$
 
 \noindent
 So, the formula $\chi$ asks that there exists $y$ with $x<y$ such that 
 there is no $z$ with $\pi(z) < \pi(y)$. This clearly does the trick. 
 \end{proof}

 We now present a final preparatory $\TOTO$ sentence whose probability does not convergence 
 on part of the target range for $q=q(n)$.
 
 \begin{cor}\label{cor:bijnafinal}
  There exists a sentence $\zeta \in \TOTO$ such that for every fixed $c>0$, all 
  sequences $q=q(n)$ satisfying $1-1/\log^* n < q < 1+ c/n$ 
  and all $n$ satisfying $W^{(2)}(\log^{**}\log^{**}n-1) < \log^*\log^*\log^* n$ we have 
  
  $$ \Pee( \Pi_n \models \zeta ) = \begin{cases}
                                    1-o(1) & \text{ if $\log^{**}\log^{**} n$ is even, }\\
                                    o(1) & \text{ if $\log^{**}\log^{**} n$ is odd }
                                   \end{cases},
  $$
  
  \noindent
  where the error terms $o(1)$ can be taken uniform over all sequences considered.
 \end{cor}

 \begin{proof}
 Let $\psi$ is as provided by Proposition~\ref{prop:q1/n} and let $\chi$ be as provided by Lemma~\ref{lem:chi}.
 We let $\zeta := \psi^\chi$ be the relativization as provided by Lemma~\ref{lem:relativization}.
 Then $\zeta$ formalizes that $\Sigma_n$ satisfies $\psi$.
 
  Now let $\eps>0$ be arbitrary (but fixed) and let $n$ be such that 
  $W^{(2)}(\log^{**}\log^{**}n-1) < \log^*\log^*\log^* n$. 
 Let $\delta=\delta(c,\eps)$ be as provided by Lemma~\ref{lem:Pi1bdd} and let $c_1=c_1(c,\eps),c_2=c_2(c,\eps)$ 
 be as provided by Lemma~\ref{lem:large_alpha}. We define
 
 $$ k_1 := \begin{cases} \delta n & \text{ if $q > 1-c/n$, }\\
            \frac{c_1}{1-q} & \text{otherwise.}
           \end{cases}, \quad 
           k_2 := \begin{cases} n & \text{ if $q > 1-c/n$, }\\
            \frac{c_2}{1-q} & \text{otherwise.}
           \end{cases}. $$
           
\noindent
A  crucial step in the current proof is the observation that, provided $n$ is sufficiently large, 
for all $k_1\leq k \leq k_2$ we have 

\begin{equation}\label{eq:qnew} 1 - \frac{C}{k-1} < q < 1 + \frac{C}{k-1}, \end{equation}

\noindent
where we can take $C := 2c/\delta$ in the case when $q > 1-c/n$ and 
$C := 2/c_1$ otherwise.

If $\log^{**}\log^{**} n$ is even then, using Lemma~\ref{lem:sigmacond}, we can write

$$ \begin{array}{rcl} 
\Pee( \Pi_n \models \zeta )
& = & \Pee( \Sigma_n \models \psi ) \\[2ex]
& \geq & \displaystyle \sum_{k_1 \leq k \leq k_1} \Pee( \Sigma_n \models \psi | \Pi_n^{-1}(1) = k )\cdot
\Pee( \Pi_n^{-1}(1)=k) \\[2ex]
& = & \displaystyle \sum_{k_1 \leq k \leq k_2} \Pee( \Mallows(k-1,q) \models \psi  )\cdot\Pee( \Pi_n^{-1}(1) = k ) \\[2ex]
& \geq & \displaystyle 
\min_{k_1\leq k \leq k_2} \Pee( \Mallows(k-1,q) \models \psi  ) - \Pee( \Pi_n^{-1}(1) < k_1 \text{ or } 
\Pi_n(1) > k_2 ) \\[2ex]
& > & 1 - O\left( \left(\log^* n\right)^{-.001} \right) - \eps, 
\end{array} $$

\noindent
using in the last line Proposition~\ref{prop:q1/n} together with~\eqref{eq:qnew}
and that the fact that for all $k \geq k_1$ we have 

$$ \begin{array}{rcl} \log\log\log k  & \geq & \log\log\log( c_1\cdot\log^* n ) \geq \log^*\log^*\log^* n  \\
& > & W^{(2)}(\log^{**}\log^{**}n-1) \geq W^{(2)}(\log^{**}\log^{**}k-1), \end{array} $$

\noindent
for $n$ sufficiently large.
If $\log^{**}\log^{**} n$ is odd then, analogously:

$$ \begin{array}{rcl} 
\Pee( \Pi_n \models \zeta ) & \leq & \displaystyle 
\max_{k_1\leq k \leq k_2} \Pee( \Mallows(k-1,q) \models \psi  ) + \Pee( \Pi_n^{-1}(1) 
< k_1 \text{ or } 
\Pi_n^{-1}(1) > k_2 ) \\[2ex]
& < & \displaystyle 
O\left( \left(\log^* n\right)^{-.001} \right) + \eps.  
\end{array} $$

\noindent
Sending $\eps\searrow 0$ finishes the proof.
 \end{proof}

 The sentence $\zeta$ provided in the previous corollary works in an asymmetric range of $q$.
 In order to remedy this, we will use a construction that allows us to distinguish (in $\TOTO$, with appropriately 
 high probability) between the case when $q < 1-c/n$ and $q > 1+c/n$.
 The following observation is the key to that construction.

\begin{lem}\label{lem:Pi1Pin}
 For every $\eps>0$ there are $c=c(\eps), n_0=n_0(\eps)$ such the following holds 
 for all $n\geq n_0$;
 \begin{enumerate}
  \item\label{itm:Pi1Pin1} If $0<q<1-c/n$ then $\Pee( \Pi_n(1) < \Pi_n(n) ) > 1-\eps$, and;
  \item\label{itm:Pi1Pin2} If $q>1+c/n$ then $\Pee( \Pi_n(1) < \Pi_n(n) ) < \eps$.
 \end{enumerate}
\end{lem}

\begin{proof}
 We first consider the case $q<1-c/n$.
 By Theorem~\ref{thm:bhat_peled} and Markov's inequality:
 
 $$ \Pee( \Pi_n(1) > n/10 ) \leq \Pee( |\Pi_n(1)-1| > n/100 ) 
 \leq \frac{100 \cdot \Ee|\Pi_n(1)-1|}{n}
 \leq \frac{200 q}{n(1-q)} \leq \frac{12}{c} < \eps/2, $$
 
 \noindent
 where the first inequality holds for $n$ sufficiently large and 
 the last inequality holds provided the constant $c$ is chosen sufficiently large.
 Analogously
 
 $$ \Pee( \Pi_n(n) < 9n/10 ) \leq \frac{100 \cdot \Ee|\Pi_n(n)-n|}{n} < \eps/2. $$
 
 \noindent
 Part~\ref{itm:Pi1Pin1} follows.
 
 Assume now $q > 1+c/n$ for some large constant $c$ to be determined.
 Recall that $r_n \circ \Pi_n \isd \Mallows(n,1/q)$.
 We have $1/q < 1/(1+c/n) < 1 - (c/2)/n$, where the last inequality holds for $n$ sufficiently large.
 Using the previous part:
 
 $$ \Pee( \Pi_n(1) < \Pi_n(n) ) = \Pee( r_n \circ \Pi_n(1) > r_n \circ\Pi_n(n) ) < \eps, $$
 
 \noindent
 for $n$ sufficiently large (and provided we chose $c$ sufficiently large).
\end{proof}

\begin{proofof}{Proposition~\ref{prop:final}}
 Let $\zeta$ be as provided by the previous corollary, let $\zeta^{\text{reverse}}$ be as provided by 
Lemma~\ref{lem:reverse_formula}, and let

$$\theta :=  \exists x\exists y : (x <_2 y) \wedge \neg\left(\exists z : z <_1 x \right) \wedge \neg\left(
\exists z : y <_1 z \right), $$

\noindent
denote a $\TOTO$ sentence that formalizes that $\Pi_n(1) < \Pi_n(n)$.
The sentence $\xi$ is now given by

$$ \xi := \left( \theta \wedge \zeta \right) \vee \left( \neg\theta \wedge \zeta^{\text{reverse}} \right). $$

That is, $\xi$ demands that one of {\bf a)} $\Pi_n(1) < \Pi_n(n)$ and $\zeta$ holds, or {\bf b)} 
$\Pi_n(1)>\Pi_n(n)$ and $\zeta$ holds for $r_n\circ\Pi_n$.

Let $\eps>0$ be arbitrary (but fixed) and let $c>0$ be as provided by Lemma~\ref{lem:Pi1Pin}.
We consider an arbitrary sequence $q=q(n)$ with $1-1/\log^* n < q < 1+1/\log^* n$.
Let $n$ be such that $W^{(2)}\left(\log^{**}\log^{**}n - 1\right) < \log^*\log^*\log^* n$.
First we assume that $\log^{**}\log^{**}n$ is even and $1-c/n<q<1+c/n$. In this case 

$$ \begin{array}{rcl} 
\Pee( \Pi_n \models \psi ) 
& \geq & 
\Pee( \Pi_n \models \zeta \text{ and } r_n\circ \Pi_n \models \zeta ) \\
& \geq & 1 - \Pee( \Pi_n \not\models \zeta ) - \Pee( r_n \circ \Pi_n \not\models \zeta ) \\
& = & 1 - o(1), 
\end{array} $$
 
 If  $\log^{**}\log^{**}n$ is odd and $1-c/n<q<1+c/n$ then similarly

$$ 
\Pee( \Pi_n \models \psi ) \leq \Pee( \Pi_n \models \zeta ) + \Pee( r_n \circ \Pi_n \models \zeta ) = o(1).
 $$
 
 If $\log^{**}\log^{**}n$ is even and $1-1/\log^* n < q \leq 1-c/n$ then we have
 
 $$ \Pee( \Pi_n \models \psi ) \geq \Pee( \Pi_n \models \zeta ) - \Pee( \Pi_n(1) > \Pi_n(n) )
 > 1-o(1)-\eps. $$
 
 If $\log^{**}\log^{**}n$ is odd and $1-1/\log^* n < q \leq 1-c/n$ then 
 
 $$ \Pee( \Pi_n \models \psi ) \leq \Pee( \Pi_n \models \zeta ) + \Pee( \Pi_n(1) > \Pi_n(n) )
 < o(1)+\eps. $$
 
 Now  suppose that $\log^{**}\log^{**}n$ is even and $1+c/n \leq q < 1+1/\log^* n$.
 Observe that $1-1/\log^* n < 1/q < 1-(c/2)/n$ for $n$ sufficiently large.
 Adjusting the value of $c$ if needed, we can again apply Corollary~\ref{cor:bijnafinal} to obtain:
 
 $$ \Pee( \Pi_n \models \psi ) 
 \geq \Pee( r_n \circ \Pi_n \models \zeta ) - \Pee( \Pi_n(1) < \Pi_n(n) )
 > 1-o(1) - \eps. $$
 
 Similarly, if $\log^{**}\log^{**}n$ is odd and $1+c/n \leq q < 1+1/\log^* n$ then 
 
 $$ \Pee( \Pi_n \models \psi ) 
 \leq \Pee( r_n \circ \Pi_n \models \zeta ) + \Pee( \Pi_n(1) < \Pi_n(n) )
 < o(1) + \eps. $$
 
 \noindent
 The result follows by sending $\eps\searrow 0$.
\end{proofof}

\section{Discussion and suggestions for further work\label{sec:discuss}}

Theorem~\ref{thm:logical_limits} established that for $q<1$ the zero-one law holds wrt.~$\TOOB$, while for 
$q>1$ there is some $\TOOB$ sentence whose probability of holding does not converge with $n$.
(In the case when $q=1$ Compton~\cite{Compton89II} has shown decades ago that the convergence law holds, but 
not the zero-one law.)
The property exhibiting the non-convergence simply asks for at least a certain number of fixed points, 
exploiting a result from~\cite{MullerVerstraaten} (repeated as Theorem~\ref{thm:odd_cycles} in the present paper). 
Having another look at the result on the limiting distributions 
for the cycle counts provided by that result, it should be relatively straightforward to show that 
when $q>1$ and we consider either the 
sequence $\Mallows(2n,q)$ of Mallows permutations restricted to 
even domain sizes or the sequence $\Mallows(2n+1,q)$ restricted to odd domain sizes, then 
in fact the convergence law will hold.

Theorem~\ref{thm:logical_limits} only considers the case of $q$ fixed. 
The case when $q=q(n)$ depends on $n$ is certainly worth investigating, in particular the 
``phase transition'' regime when $q \to 1$.
A natural line of inquiry would be to try and determine a 
``critical window'' inside which the behaviour is essentially the same as $q=1$ (convergence law but not 
zero-one law) while below we get the zero-one law and above the convergence law fails.
The behaviour of $\TOOB$ sentences is essentially determined by the 
limiting distribution of (short) cycles, so one might expect this to be relatively straightforward.
The behaviour of cycles in the regime when $q\to 1$ is however far from well understood at the moment.
We point the reader to~\cite{MullerVerstraaten} for some more discussion of the relevant open questions
on cycles in the Mallows model.

In Theorem~\ref{thm:toto}, Part~\ref{part:toto_non_convergence} we did in fact consider a range 
of $q$ with $q\to 1$.
We constructed a sequence $\varphi \in \TOTO$ such that $\Pee( \Mallows(n,q) \models \varphi )$ 
does not converge for any sequence $q=q(n)$ with $1-1/\log^* n < q < 1+1/\log^* n$.
For the $\varphi$ we've constructed, we believe it should be possible 
to determine a sequence $q=q(n)$ that converges to 1 (very slowly) while $\Pee( \Mallows(n,q) \models \varphi )$
converges.

On the other hand, as mentioned earlier, the range $(1-1/\log^* n,1+1/\log^* n)$ is certainly not best possible, as 
should evident from the proof.
It should be possible to achieve a range for $q$ that approaches $1$ much more slowly, with a minor variation 
on our proof. The only constraint on the range we can obtain seems to be what we can axiomatize using the 
arithmetization construction in Section~\ref{sec:arithSO}.
That is, if we can axiomatize some increasing function $F(n)$ using binary relations 
as in Section~\ref{sec:arithSO} and $f(n)$ is its 
discrete inverse (i.e.~$f(n) := \max\{m\leq n : F(m) \leq n \}$), then one should be able to construct 
a $\varphi$ such that $\Pee( \Mallows(n,q) \models \varphi )$ does not converge for all $1-1/f(n) < q < 1+1/f(n)$.
It is not quite clear to the authors at present, most likely just for want of a proper background in 
the relevant subfields of Mathematics, exactly how wide of a range can be obtained in this manner.
But, perhaps a different proof technique can yield even wider ranges.
In particular, perhaps one can widen the range of $q$ where there is non-convergence even more 
if one does not insist on finding a single sentence that works in the entire range?

\subsection*{Acknowledgements}

We thank Kevin Compton, Valentin F\'{e}ray, Tomasz {\L}uczak, Marc Noy and Joel Spencer for helpful 
discussions and pointers to the literature.
We thank Pieter Trapman and Rineke Verbrugge for helpful and very detailed comments on the chapter of 
TWV's PhD thesis on which this 
paper is based

\bibliographystyle{plain} 
\bibliography{bibfile}

\end{document}